\documentclass[11pt]{article}
\usepackage[utf8]{inputenc}
\usepackage{geometry}
\usepackage{authblk}
\usepackage{lmodern}
\usepackage[T1]{fontenc}
\usepackage{amsfonts}
\usepackage{mathrsfs}
\usepackage{indentfirst}
\usepackage{amsmath}
\usepackage{amsthm}
\usepackage{enumerate}
\usepackage{enumitem}
\usepackage{amssymb}
\usepackage{relsize}
\usepackage{setspace}
\usepackage{mathtools}
\usepackage{abraces}
 \usepackage[all]{xy}
\usepackage{bbm}
\usepackage{graphicx}
\usepackage{tikz}
\usepackage{tikz-qtree}
\usetikzlibrary{shapes.geometric, arrows}
\usepackage{hyperref}
\usepackage{url}
\usepackage{pgfplots}
\pgfplotsset{compat=1.15}
\usetikzlibrary{arrows}

\numberwithin{equation}{section}

\DeclareMathOperator{\indicatrice}{\mathbbm{1}}
\DeclareMathOperator{\tv}{d_{\mathrm{TV}}}
\DeclareMathOperator{\argmin}{\mathrm{argmin}}

\newtheorem{theorem}{Theorem}[section]
\newtheorem{corollary}[theorem]{Corollary}
\newtheorem{lemma}[theorem]{Lemma}

\newtheorem{proposition}[theorem]{Proposition}
\newtheorem{claim}[theorem]{Claim}
\newtheorem*{claim*}{Claim}

\newtheorem{example}[theorem]{Example}
\newtheorem{def/prop}[theorem]{Definition/Proposition}

\newcommand{\toas}{\overset {\mathrm{a.s.}}{\longrightarrow}}
\theoremstyle{definition}
\newtheorem{remark}[theorem]{Remark}

\newcommand\bB{{\mathbb B}}

\newcommand\bE{{\mathbb E}}

\newcommand\bG{{\mathbb G}}

\newcommand\bM{{\mathbb M}}
\newcommand\bN{{\mathbb N}}

\newcommand\bP{{\mathbb P}}

\newcommand\bR{{\mathbb R}}
\newcommand\bS{{\mathbb S}}

\newcommand\cE{{\cal{E}}}
\newcommand\cF{{\cal{F}}}

\newcommand\cI{{\cal{I}}}

\newcommand\cU{{\cal{U}}}

\newcommand\cY{{\cal{Y}}}
\newcommand\cZ{{\cal{Z}}}

\newcommand{\wt}{\widetilde}
\newcommand{\sref}[2]{\hyperref[#2]{#1 \ref{#2}}}

\title{The temporal stochastic block model}

\author[1,3]{Sofiya Burova\thanks{
Research of S.B. supported by the Spanish Agencia Estatal de Investigaci\'on under project PID2022-138268NB-100 and Grant PID2020-113082GB-I00 funded by MICIU/AEI/10.13039/501100011033.}}
\author[2,3,4]{G\'abor Lugosi\thanks{
Research of G.L. supported by the Spanish Ministry of Economy and Competitiveness, Grant PGC2018-101643-B-I00 and FEDER, EU.}}
\author[1,5]{Guillem Perarnau\thanks{
Research of G.P. supported by the Spanish Agencia Estatal de Investigaci\'on under projects PID2020-113082GB-I00 and the Severo Ochoa and Mar\'ia de Maeztu Program for Centers and Units of Excellence in R\&D (CEX2020-001084-M).}}
\affil[1]{IMTECH and Departament de Matem\`atiques, Universitat Polit\`ecnica de Catalunya, Barcelona, Spain}
\affil[2]{ICREA, Pg. Llu\'is Companys 23, 08010 Barcelona, Spain}
\affil[3]{Department of Economics and Business, Pompeu Fabra University, Barcelona, Spain}
\affil[4]{Barcelona School of Economics, Barcelona, Spain}
\affil[5]{Centre de Recerca Matem\`atica, Barcelona, Spain}

\date{\today}

\begin{document}

\maketitle

\begin{abstract}
Motivated by the need to understand infection spreading in inhomogeneous populations,
we consider a \emph{temporal} version of the stochastic block model, where each edge is equipped with a random label, interpreted as a timestamp.
We study the size and structure of reachable sets via increasing paths (that is, paths whose edge timestamps are strictly increasing) 
where the connections per node are of the order of $\log n$.
We prove tight results for the first order asymptotics of the size of the reachable set from a typical vertex, and the proportions of reached vertices in each community.
We also determine the typical length of the shortest and longest increasing paths with one or two fixed endpoints. In the regime where the reachable set $S$ of a typical vertex is of sublinear size, we describe the structure of the subgraph induced by $S$ by identifying an almost spanning tree (i.e., spanning all but $o(|S|)$ vertices), that is distributed
as a weighted random recursive tree, whose height, profile and degree sequence are known. In particular, when $|S|\ll \sqrt{n}$, the subgraph induced by $S$ is itself distributed as this well-understood weighted random recursive tree. 


\end{abstract}

\newpage
\section{Introduction}
\label{introduction}

\subsection{Motivation, notation and context}
In studying the spread of a viral epidemic or the spread of a rumor in a network of interacting individuals, the time when contacts occur is of crucial importance, since an encounter leads to an infection only if one of the two individuals involved was already infected at the time of the encounter. This consideration led to the model of \emph{temporal graphs}, in which edges are equipped with timestamps.
Indeed, temporal graphs have attracted considerable attention.  

Given a graph $G=(V,E)$, we construct the temporal graph $(G,\pi)$ by assigning to each edge $e \in E$ a unique label or \emph{timestamp}, given by $\pi(e)$. In the literature, as in our case, $\pi$ is often defined as a random permutation of the edges of $G$, independent of $G$. These timestamps may be interpreted as arrival times of the edges, inducing a natural dynamic construction of the graph in discretisized time, each edge $e$ arriving at \emph{time} $\pi(e)$.

Intuitively, we may interpret $(G,\pi)$ as a propagation process such as rumor spreading, infections, etc., where each edge $e$ represents an interaction between its endpoints at time $\pi(e)$ during which the transmission of information or disease occurs. Take $v \in V$ to be the \emph{origin} (or patient zero), that is the first node to get infected. Then, a vertex $u \in V$ becomes infected if and only if there is an \emph{increasing path} from the origin $v$ to $u$. We say that a path $(w_1,\ldots,w_k)$, $w_i \in V$, $w_i\neq w_j$, $w_iw_{i+1}\in E$ for all $i,j \in [k]$, is \emph{increasing} if the sequence of labels $(\pi(w_i w_{i+1}))_{i \in [k-1]}$ is increasing. The length of a path is defined as the number of edges in it. Furthermore, denote by $B_{(G,\pi)}(v)$ the set of vertices that can be reached from $v$ via increasing paths. The reachable set represents the infection of origin $v$. 

The notion of connectivity in this model is restricted to the existence of increasing paths between vertices. Even when the underlying graph is undirected, such paths are neither symmetric nor transitive: the existence of temporal paths from $u$ to $v$ and from $v$ to $w$ does not imply the existence of a temporal path from $u$ to $w$. As a consequence, reachability properties differ significantly from those in static graphs.
These differences have important algorithmic implications. For instance, it was established in \cite{KKK02} that deciding whether there exist $k$ vertex-disjoint temporal paths between two vertices is NP-complete, whereas the corresponding problem in static graphs is polynomial-time solvable. 
These quantities have been extensively studied in the case when the underlying graph is an Erd\H{o}s-Rényi random graph (\cite{AFST20, BCCKRRZ23, CRRZ24, BKL24,AtDeLu25a,AtDeLu25d}).

The main objects of interest in this paper are longest increasing paths (with 0, 1 or 2 fixed endpoints), shortest paths between two fixed vertices, and the reachable set $B_{(G,\pi)}(v)$ with origin at a fixed vertex $v$, as well as the structure of the induced subgraph $G[B_{(G,\pi)}(v)]$. These quantities exhaustively describe the size, speed and depth of propagation processes on a given network. In particular, they quantify how far and how efficiently information or influence can spread from a given source under temporal constraints.

We are interested in the asymptotic behavior of temporal graphs $(G,\pi)$ where $G$ is a random graph on $n$ vertices. Throughout the paper, we use standard asymptotic notation $O, o, \Theta, \Omega, \sim$, where all limits are taken as $n\to \infty$. Whenever indexed by $p$, they describe the limiting behavior of random variables with respect to convergence in probability, for example, $X_n \sim_p a_n$ means $X_n/a_n \to 1$ in probability, as $n\to \infty.$ We say that a sequence of events $(E_n)_{n\in \mathbb{N}}$ occurs \emph{with high probability} (whp for brevity) if $\lim_{n\to +\infty}\mathbb{P}_n[E_n] = 1$.

It is known that the temporal Erd\H os-R\'enyi random graph $(\bG(n,p
),\pi)$ undergoes a phase transition at $p= c\log n /n$, where $c \in \bR^+$. In particular, Casteigts et al. showed in \cite{CRRZ24} that the thresholds for the properties that a typical pair of vertices is connected, a typical vertex can reach all other vertices, and any pair of vertices is connected are, respectively, $\log n/n, 2\log n/n$ and $3\log n/n$. Broutin, Kam\v  cev and Lugosi, \cite{BKL24}, extended these results providing tight bounds for the longest and shortest increasing paths in each regime, thus completing the study of the temporal diameter of $\bG(n,p)$. Furthermore, they show that, when $c<1$, 
the reachable set from a typical vertex 
is of size roughly $n^c$. This is achieved through first-moment arguments and by embedding a \emph{uniform random recursive tree} into the induced graph by the reachable set. This is a well-understood random object (see, e.g., \cite{P94, J05, DL95, AL18, D87}), obtained by repeatedly attaching a leaf to a uniform random vertex of the existing tree. When $c>1$,  the reachable set is of size $(1-o(1))n$, with high probability.

In this paper, we take the underlying graph $G$ to be distributed as the Stochastic Block Model (SBM). This is a well-studied random graph model (see \cite{A18} for a detailed summary) due its simplicity and vast applications across a wide range of fields, such as social network analysis, biology, information retrieval, etc. The stochastic block model is used to understand and analyze the structure of complex networks by partitioning nodes into distinct communities or blocks based on connection probabilities. 

The temporal stochastic block model is particularly useful in settings where interactions occur over time and their intensity depends on the types of the individuals involved. For instance, in epidemic models, communities may represent age groups, vaccination statuses, or other demographic classes, with different transmission rates within and between groups. 
This inhomogeneity is not captured by the temporal Erd\H os-R\' enyi model, where all pairs of vertices interact with the same probability. Incorporating community structure makes the model closer to real-world propagation processes.

In the setting, where all connection probabilities are of the same asymptotic order and all communities have linear size, our results describe the size of the infection, the proportions of infected individuals in each community, and the length of possible transmission chains. In particular, we observe that the first-order asymptotic behavior of the infection does not depend on the choice of the type of the initially infected vertex. Thus, the infection evolves in the same way regardless of the type of the origin. Our results also extend to asymmetric between-block transmission probabilities, where interactions between distinct communities may depend on direction, while interactions within a community remain undirected. For example, individuals who adopt protective measures (e.g., wear a mask in the case of an airborne disease) may be less likely to infect or be infected by others than vice versa. We discuss this extension, as well as other variations of the model, in Section \ref{discussion}.

\subsection{Main results}
In this section we state the main results of this paper. The main objects of interest, as mentioned in the previous subsection, are the size and structure of an infection process of fixed origin. We start with a formal definition of the model and its parameters.

\paragraph{Formal definition of the model.}Let $n \in \mathbb{N}$ and $k< n$ be positive integers, where $k$ indicates the number of blocks and $n$ the number of vertices. Furthermore, let $\boldsymbol{a} = (a_1,\ldots,a_k) \in (0,1]^k$ be a positive vector such that $\sum_{i=1}^k a_i = 1$, and let $P=P(n) \in [0,1]^{k\times k}$ be a symmetric $k\times k$ matrix. The stochastic block model of parameters $n,k,\boldsymbol{a},P$, denoted by $\mathbb{S}\mathbb{B}\mathbb{M}(n,k,\boldsymbol{a},P)$, is defined as follows. Take vertex set $V=[n]$ and partition it into $k$ blocks (or communities) $V_1,\ldots,V_k$ such that, for all $i \in [k]$, the size of $V_i$ is either $\lfloor a_i n\rfloor$ or $\lceil a_in \rceil$, where the choice is made arbitrarily in such a way that $\sum_{i\in[k]}|V_i| = n$. We often simplify the notation and write $|V_i| = a_i n$, as it has no impact on our results. For all $i,j \in [k]$, $u \in V_i$, $v \in V_j$, the edge $uv$ belongs to $E$ with probability $P_{ij}$, and edges are independent. Notice that if $k=1$ or if all entries of $P$ are identical, then we obtain $\bG(n,p)$. Let $(G,\pi)$ be the Temporal Stochastic Block Model (TSBM), where $G \sim \mathbb{S}\mathbb{B}\mathbb{M}(n,k,\boldsymbol{a},P)$ and $(\pi(e))_{e\in E(G)}$ is a permutation of $[|E(G)|]$ chosen uniformly at random. We consider $k$ to be a fixed constant as $n\to \infty$.\\

Let $G$ be a $\bS\bB\bM(n,k,\boldsymbol{a},P)$ and $\pi$ be a uniform random permutation of the edges of $G$. Equivalently, since only the \emph{ordering} of the edges matters in the analysis of increasing paths, one may assign to each unordered pair of vertices $uv$ an exponential random variable $\pi^\prime(uv)$ of parameter 1 (see \cite{CRRZ24}), thus equipping non-edges with time stamps as well. Observe that, through simple couplings of independent Bernoulli random variables, one may show that 
$$E(G_{\min}) \subseteq E(G) \subseteq E(G_{\max}), $$
where $G_{\min}$ and $G_{\max}$ are Erd\H os-R\'enyi random graphs on $n$ vertices, of parameters $p_{\min} \coloneqq \min_{i,j\in [k]}P_{ij}$ and $p_{\max}\coloneqq \max_{i,j\in [k]}P_{ij}$, resp. Hence, for any fixed $v \in V(G)$,
\begin{equation}
\label{obs_Gnp}
B_{(G_{\min},\pi^\prime)}(v) \subseteq B_{(G,\pi^\prime)}(v) \subseteq B_{(G_{\max},\pi^\prime)}(v).
\end{equation}
The results on the temporal Erd\H os-R\'enyi random graphs from \cite{BKL24} suggest that the regime $p\sim \log n/n$ is critical for the size of the reachable set. In light of \eqref{obs_Gnp}, we focus our analysis on the regime where all connection probabilities are either 0, or of the order $\log n/n$.\\

Throughout the paper, we assume the following mild conditions on the parameter of the model $P$:
\begin{enumerate}[label = (\roman*)]
	\item \label{ass1} $P$ is \emph{irreducible}, that is for all $i,j \in [k]$, there exists $m \in \mathbb{N}$ such that $(P^m)_{ij} > 0$.
	\item \label{ass2} The connection probabilities are either 0, or of the order $\log n/n$, that is, there exist non-negative constants $(\lambda_{ij})_{i,j\in [k]}$ such that $P_{ij} = \lambda_{ij}\log n/n$, for all $i,j \in [k]$.
\end{enumerate}
Notice that, the edge probabilities are \emph{not} necessarily non-zero. Hence, our results remain true, for example, in the case of bipartite random graphs.

Condition $(i)$ simply ensures that the graph is connected with non-zero probability. Condition $(ii)$ imposes all connection probabilities to have the same order with respect to $n$, which is a natural parametrization of stochastic block models. The choice for the regime $\log n/n$ is motivated by the observation in \eqref{obs_Gnp}.  

In order to state our results, we need to introduce the $k\times k$ matrix $\Lambda$, defined by $\Lambda_{ij} \coloneqq a_i \lambda_{ij}$ for all $i,j\in [k]$. This matrix may be interpreted as the expected proportion of vertices of type $i$ in the neighborhood of a vertex $u$ of type $j$, since
$$\bE[\#\{uw \in E : w\in V_i \}]= \Lambda_{ij} \log n.$$
Observe that, unlike the matrix $P$, $\Lambda$ is generally not symmetric. 
We apply a generalized version of the Perron-Frobenius theorem for non-negative matrices (see Chapter 7, \cite{M23}) to $\Lambda$ to deduce that its largest eigenvalue $\theta$ is unique, positive, and its associated eigenvector $x_\theta$ is comprised of non-negative entries, where we take $x_\theta$ satisfying $\boldsymbol{1}^T\cdot \Lambda \cdot x_\theta = 1$, with $\boldsymbol{1} = (1,\ldots,1)^T \in \mathbb{R}^k$. Note that this normalization implies $\|\theta x_{\theta}\|_1=1$. The main result of this paper shows that the size of the reachable set from a fixed vertex undergoes a phase transition at $\theta = 1$.

\begin{theorem}[Reachability from a vertex]\label{thm_reachability} Under assumptions $\ref{ass1}$ and \ref{ass2}, for any $v\in V$,
\begin{enumerate}[label=(\roman*)]
	\item \label{supercritical_reachability} if $\theta>1$, then $|B_{(G,\pi)}(v)| \sim_p n$;
	\item \label{subcritical_reachability} if $\theta<1$, then, for all $\epsilon > 0$, whp $$n^{(1-\epsilon)\theta} \leq |B_{(G,\pi)}(v)| \leq n^{(1+\epsilon)\theta}.$$ Moreover, for every $i \in [k]$, the proportion of vertices in $V_i$ reached by $v$ satisfies
	$$ \frac{|B_{(G,\pi)}(v)\cap V_i|}{|B_{(G,\pi)}(v)|} \sim_p (\theta x_\theta)_i.$$
\end{enumerate}
\end{theorem}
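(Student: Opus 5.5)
We work with the exponential-timestamp representation: every pair $uw$ with $u\in V_i$, $w\in V_j$ is an edge independently with probability $P_{ij}$ and carries an independent label $\pi'(uw)$. The plan is to compute first moments of increasing paths by type sequence, then obtain concentration by a second-moment or exploration argument. An increasing path $(v=w_0,\dots,w_\ell)$ with type sequence $(j_0,\dots,j_\ell)$ has probability $\frac{1}{\ell!}\prod_s P_{j_sj_{s+1}}$, since the labels are increasing with probability $1/\ell!$. Summing over the roughly $\prod_s a_{j_{s+1}}n$ choices of vertices, the expected number of increasing paths of length $\ell$ from $v$ (of type $j_0$) ending in $V_i$ is
\begin{equation*}
\frac{(\log n)^\ell}{\ell!}\,\bigl(\Lambda^\ell\bigr)_{i j_0}\,(1+o(1)).
\end{equation*}
By Perron--Frobenius and irreducibility, $(\Lambda^\ell)_{ij_0}\approx \theta^\ell (x_\theta)_i\, y_{j_0}$ for the left eigenvector $y$. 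Summing over $\ell$, the total is dominated by $\ell\approx \theta\log n$ and equals $n^{\theta+o(1)}$. The proportion ending in $V_i$ is $(x_\theta)_i/\|x_\theta\|_1=(\theta x_\theta)_i$ by the normalization $\|\theta x_\theta\|_1=1$. This immediately gives the upper bound $|B(v)|\le n^{(1+\epsilon)\theta}$ in \ref{subcritical_reachability} by Markov's inequality. Aperiodicity issues can be handled with a Cesàro average or by using the exponential weighting $\sum_\ell (\log n)^\ell \Lambda^\ell/\ell!=n^{\Lambda}$, which is positive for irreducible $\Lambda$.

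For the lower bounds we would run the natural temporal exploration, a multitype analogue of the random recursive tree embedding of \cite{BKL24}. Time is rescaled so that each pair has rate $\lambda_{ij}\log n/n$. At continuous time $t$, every reached vertex of type $j$ recruits unreached type-$i$ vertices at rate about $\Lambda_{ij}\log n$, as long as the reached set has size $o(n)$. The type-count vector $Z(t)$ is then approximately a multitype Yule process with mean matrix $\Lambda\log n$, so $Z(t)\approx W e^{\theta t\log n}x_\theta$, with $W>0$ almost surely. The total time budget for all labels is $1$ in the rescaled units (labels are uniform on $[0,1]$ after conditioning). This gives $|B(v)|\approx n^{\theta}$ when $\theta<1$, and the type proportions converge to $x_\theta$ normalized, by the Kesten--Stigum theorem. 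Concentration to within factors $n^{\pm\epsilon\theta}$ follows because $W$ is a.s. positive and finite, and type proportions concentrate since the population is large. To handle collisions (two reached vertices recruiting the same vertex), we note that while $|B|\le n^{1-\delta}$ the fraction of wasted recruitments is $o(1)$. A fine discretization of time with Chernoff bounds per step would replace the idealized continuous process.

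For \ref{supercritical_reachability}, the same exploration reaches $n^{1-\delta}$ vertices before time $1-\eta$ for small $\eta,\delta$, since $\theta>1$. Every remaining vertex $u$ of type $i$ then has expected number of edges with labels in $(1-\eta,1]$ to the reached set of order $\eta\, n^{1-\delta}\log n/n$. That is too small, so the last phase needs a sprint. Using times $(1-\eta,1]$, we grow the set from $n^{1-\delta}$ to linear size, then to $(1-o(1))n$, since each unreached vertex sees about $\eta\,\Omega(n)\log n/n\to\infty$ edges into the reached set. The main obstacle is this supercritical endgame, together with controlling the dependence between the exploration's revealed edges and the final sprint. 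We would handle the dependence by splitting the time interval into disjoint windows, so that each window uses fresh independent labels.
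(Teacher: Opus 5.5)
Your outline is correct, but it takes a different route from the paper, most clearly in the supercritical case.

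\textbf{Upper bound.} This is exactly the paper's argument: Lemma~\ref{lemma:nb_paths} plus Markov, with periodicity absorbed by $b_{\sigma(v)}^T e^{\Lambda\log n}\mathbf{1}\le Cn^{\theta}$.

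\textbf{Subcritical lower bound and proportions.} Here you use the paper's mechanism in continuous-time form. The paper couples $(G,\pi)$ to a Poisson multigraph with exponential labels (Lemma~\ref{lemma:coupling_G}). It forces exact rates with a block-regular exploration and a reserve set, reads the type counts as a P\'olya urn (Lemma~\ref{PU}), and bounds the clock at $n^{(1-\epsilon)\theta}$ by Janson's tail bound for sums of exponentials. Your multitype Yule process with rates $\Lambda\log n$ is the continuous-time embedding of that same urn. So Kesten--Stigum, in Athreya's continuous-time version, gives growth and proportions in one stroke, and even a limit law for $|B|/n^{\theta}$. In exchange, you must do the depletion comparison yourself. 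This is routine: given the history, an unrevealed pair has conditional hazard in $[p_{ij},p_{ij}/(1-p_{ij})]$. Thinning rejected births together with their descendants then loses only an $o_p(1)$ fraction while $|B|\le n^{(1+\epsilon)\theta}$.

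\textbf{Supercritical case.} This part is genuinely different. The paper never follows the exploration into the linear regime; its coupling needs $\tau=o(n)$. Instead it splits $(G,\pi)$ into time-ordered subcritical layers (Proposition~\ref{layering}). It then intersects the forward set of $v$ in the first layer with the backward set of each $w$ in the second, both of size $n^{1/2+\delta}$, via a birthday-paradox bound. So it needs only subcritical inputs and time reversal.

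Your direct epidemic endgame also works, and it yields more, namely $n-|B|\le n^{1-c}$ whp. Three remarks:
\begin{itemize}
\item The separate ``sprint'' from $n^{1-\delta}$ is unnecessary. Domination from below by a Yule process with rates $(1-\epsilon')\Lambda\log n$, up to the hitting time of $\epsilon n$, already forces that hitting time to occur before $1-\eta$ when $(1-\epsilon')(1-\eta)\theta>1$.
\item The reached set is then linear in every block. Conditionally on the history, the pairs joining an unreached $u$ to reached vertices $w$ independently carry a label in $(t_0,1]$ with probability at least $p_{\sigma(w)\sigma(u)}(1-t_0)$. Hence $u$ is missed with probability $n^{-c\eta}$.
\item No disjoint-window decoupling is needed, since conditioning on ``no label in $(T_w,t_0]$'' only increases the chance of a later label.
\end{itemize}
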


Theorem \ref{thm_reachability} provides a description of the reachable set of a fixed vertex in temporal stochastic block models, identifying a phase transition at $\theta = 1$ and determining the first-order asymptotics of its size and composition in both regimes. It has a natural interpretation in models of epidemic or information spreading on networks with community structure: the reachable set corresponds to the set of individuals infected from a given source. The result thus characterizes the typical size of an outbreak and identifies the threshold at which it becomes widespread. It also determines the asymptotic proportions of infected individuals within each community, which is particularly relevant in applications where one seeks to understand how different groups, such as age or gender classes, are affected. \\
\indent The main difficulty lies in the strong dependencies induced by the temporal constraint and the inhomogeneity of the connection probabilities. This prevents a direct application of classical branching process or exploration arguments. In particular, consider the natural recursive construction of $B_{(G,\pi)}(v)$, where we start with $\{v\}$ and then recursively attach a new vertex via the edge of minimal label among those exceeding the last one added (this is essentially building the \emph{first foremost tree}, see \cite{CRRZ24}). In the case of the temporal stochastic block model, this process heavily depends on the past, namely on the labels of previously added edges, as well as on the number of vertices of each type already added to the tree. Our approach relies on a coupling with auxiliary random structures where the edge labels posses the memoryless property, and a refined analysis of the aforementioned process, where the dependency on the previously added vertices' types is controlled via a P\'olya urn process. 

\begin{remark} When $\theta > 1$, the proportion of vertices of type $i$ reached by $v$ is with high probability $a_i$. Hence, this proportion also undergoes a phase transition from $(\theta x_\theta)_i$ when $\theta < 1$, to $a_i$ as $\theta > 1$. In general, $a_i \neq (\theta x_\theta)_i$, as can be seen in Example \ref{example_explicit}, where we compute explicit expressions for $\theta$ and $x_\theta$ in a handful of special cases. 
\end{remark}

In the proof of Theorem \ref{thm_reachability}, we show an intermediate result on the structure of the induced graph by $B_{(G,\pi)}(v)$. Before stating it, we need to define a rooted random tree model called a \emph{weighted random recursive tree}, introduced in \cite{BV06} in 2006, and studied particularly for its connection to preferential attachment trees \cite{BV06,HI17,MUB18,S18,ELO23}. Known results include tight asymptotic bounds for most tree properties (degree sequence, height, profile; see \cite{S18}). The model is formally defined through a recursive construction procedure. For any sequence of non-negative real numbers $(w_n)_{n\geq 1}$ with $w_1 > 0$, define the weighted random recursive tree of weights $(w_n)_{n\geq 1}$ as follows: at step 1, the tree is a single vertex 1, which is the root throughout the process. At step $t+1$, add vertex $t+1$ by attaching it via an edge to the randomly chosen vertex $K_{t+1}\in [t]$, where
$$\bP[K_{t+1} = k] \propto w_k,  $$
for all $k \in [t]$. $K_{t+1}$ is independent of the evolution of the tree and, hence, the random variables $K_2,K_3,\ldots$ are mutually independent. 
Denote by $(b_i)_{i\in [k]}$ the canonical basis of $\bR^k$, and let $\omega_i = b_i\cdot \Lambda \cdot \boldsymbol{1}$ for all $i\in [k]$. Let $(\sigma(m))_{m\in \bN} $ be a sequence of random variables, such that for all $m\in \bN$,
$$\bP[\sigma(m)=i|(\sigma(s))_{s< m }] \propto  b_i \cdot \Lambda \cdot X(m),$$
    where $X(m)$ is a random vector of $\bR^k$, whose $i^{\textrm{th}}$ entry is given by
    $$X_i(m) \coloneqq \#\{s < m: \sigma(s) = i \}.$$

Fix $v \in [n]$. For $M\in \mathbb{N}$, let $T_M(\boldsymbol{W})$ be a weighted random recursive tree of size $M$, and weights given by the random vector $\boldsymbol{W}=(W_1,\ldots,W_M)$, defined by $W_1 = \omega_i$ if $v \in V_i$, and for all $m>1, i\in[k]$, $W_m = \omega_{\sigma(m)}$. In other words,
    $$\bP[W_m = \omega_i|W_1,\ldots,W_{m-1}] \propto b_i \cdot \Lambda \cdot X(m).$$
    


Crucially, the random weights $(W_m)_{m\geq 1}$ are independent of the evolution of the tree. Indeed, we can compute them before building the tree, and thus construct $T_M(\boldsymbol{W})$ using a deterministic realization $(w_1,\ldots,w_M)$ of $\boldsymbol{W}$. It is easy to check that all possible realizations of the sequence satisfy the assumptions from \cite{S18}, since they all belong to the finite set $\{\omega_i : i\in[k]\}$. Hence, all their results on the height, degree sequence and profile, apply to $T_M(\boldsymbol{W})$. We are now ready to state our result.


\begin{proposition}
\label{proposition_subcritical_structure}
    Suppose assumptions \ref{ass1} and \ref{ass2} hold. Fix a vertex $v\in[n]$. Then, if $\theta < 1$, with high probability there exist $M=(1-o(1))|B_{(G,\pi)}(v)|$ and an embedding 
    $$\varphi : T_M(\boldsymbol{W}) \to G[B_{(G,\pi)}(v)], $$
    such that $\varphi(1)=v$, and all root-to-vertex paths are increasing in $(G,\pi)$. In particular, when $\theta < 1/2$, $M=|B_{(G,\pi)}(v)|$ and $\varphi$ is an isomorphism, with high probability. 
\end{proposition}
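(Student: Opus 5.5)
The plan is to build $B_{(G,\pi)}(v)$ through the exponential-clock reformulation. Give every unordered pair $uw$ an independent $\mathrm{Exp}(1)$ label $\pi'(uw)$, and declare $uw$ to be an edge independently with probability $P_{ij}$. We then grow the set of reached vertices together with their arrival times, in the spirit of the first foremost tree of \cite{CRRZ24}. Each reached vertex $u$, reached at time $t_u$, sends out potential edges with labels larger than $t_u$. By memorylessness, conditionally on the past, the residual label of each unexplored pair $uw$ with $w\in V_j$ exceeds $t_u$ by an $\mathrm{Exp}(1)$ amount and is present with probability $P_{ij}$.

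Thinning at rate $P_{ij}$, the vertex $u$ then emits new connections towards $V_j$ at rate roughly $P_{ij}|V_j\setminus \text{reached}|\approx \Lambda_{ji}\log n$. Its total emission rate is therefore $\sum_j\Lambda_{ji}\log n$. Up to the ordering of summation indices, matching the definition $\omega_i=b_i\cdot\Lambda\cdot\boldsymbol 1$ should be a matter of bookkeeping. The next vertex to join is attached to an existing vertex $u$ with probability proportional to $u$'s rate $\omega_{\text{type}(u)}$. Given the attachment, its type is $j$ with probability proportional to $\Lambda_{j,\text{type}(u)}$. Averaging over $u$, the type of the $m$-th vertex has law $\propto b_j\cdot\Lambda\cdot X(m)$, which is exactly the urn $\sigma(m)$. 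This is the heuristic identification of the exploration tree with $T_M(\boldsymbol W)$. The key point is that the parent choice and the type choice factor correctly, so the weights can be sampled in advance, independently of the tree shape.

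Next we make this rigorous with explicit error terms. Three things can go wrong.
\begin{enumerate}[label=(\alph*)]
\item A proposed edge $uw$ points to an already reached $w$, or it is a pair $uw$ whose label was already revealed from $w$'s side.
\item Depletion: $|V_j\setminus\text{reached}|\neq a_jn$.
\item The time horizon. Labels live in a window of total length about $1$, and after scaling rates by $\log n$ the population grows like $e^{\theta\log n\cdot t}$.
\end{enumerate}
Item (c) yields the size $n^{\theta+o(1)}$ once $\theta<1$ is known from Theorem \ref{thm_reachability}, whose subcritical part we may assume. We couple step by step with $T_M(\boldsymbol W)$: at each step we draw the ideal weighted-recursive choice and, on the event of no defect, use it as the exploration step. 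The depletion in (b) changes the rates by factors $1-O(|S|/n)=1-n^{\theta-1+o(1)}$. This is negligible over $|S|$ steps, giving a total variation error of $|S|^2/n$, which is only $o(1)$ when $\theta<1/2$. For general $\theta<1$ we instead allow defect steps and discard the vertices involved. A vertex added at step $m$ is a ``collision'' with probability $O(m/n)$, so the expected number of collisions is $O(|S|^2/n)$. Each collision should spoil only a bounded expected subtree, or we can simply remove collided vertices together with their descendants. The relevant quantity is then $o(|S|)$ provided $|S|/n\to0$, since $\sum_{m\le|S|} m/n\cdot(\text{subtree size})$ can be controlled by the recursive-tree fact that the expected subtree of vertex $m$ has size about $|S|/m$. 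This gives $o(|S|)$ total loss, hence $M=(1-o(1))|B|$. The urn proportions converging to $\theta x_\theta$ (Perron–Frobenius for the Pólya urn) also confirm consistency with Theorem \ref{thm_reachability}.

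Finally, the embedding $\varphi$ maps each tree vertex to the vertex it was attached through. Root-to-vertex paths are increasing by construction, since every attachment edge has label exceeding the parent's arrival time. For $\theta<1/2$ there are no defects whp, so $\varphi$ is onto $B$. To show it is an isomorphism we must also rule out extra edges of $G$ inside $B$. The expected number of such edges is $|B|^2\log n/n=n^{2\theta-1+o(1)}\to0$, so $G[B]$ is itself the tree whp. I expect the main obstacle to be the general $\theta<1$ case: controlling the cumulative effect of collisions and revealed-label dependencies so that the discarded descendants total only $o(|S|)$. This requires care with the weighted tree's subtree sizes, and with the fact that the stopping time (the exhaustion of labels) is itself random and correlated with the types.
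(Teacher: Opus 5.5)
The genuine gap is the case $\theta\in[1/2,1)$: discarding defective vertices together with their descendants does not yield an embedding of $T_M(\boldsymbol W)$.

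After a failed coupling step, the two processes contain different vertices, each with its own weight and future children. Keeping them coupled therefore forces you to prune the weighted recursive tree as well, so what you embed is a pruned tree rather than $T_M(\boldsymbol W)$.

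Nor is the pruned tree distributed as one. Given that the step-$m$ vertex has type $j$, the chance that it is defective (a depleted or already-reached target) is about $|I\cap V_j|/|V_j|$. This depends on $j$ at first order, because $(\theta x_\theta)_j/a_j$ is in general not constant. The surviving type sequence is therefore biased by $O(m/n)$ at step $m$, which accumulates to a shift of order $|S|^2/n$ in the type counts. For $\theta$ close to $1$ this exceeds their natural fluctuations, so the pruned tree is not even close in total variation to a weighted random recursive tree with weights $\boldsymbol W$. Your estimate $\sum_m (m/n)(|S|/m)=o(|S|)$ controls how many vertices are lost, not the law of what remains. It is the same $|S|^2/n$ distortion you found for depletion, and pruning does not remove it.

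The missing idea is to make the embedded exploration defect-free by construction. The paper explores along a block-regular sequence $\boldsymbol F$ (see \eqref{embed_sequence}):
it sets aside a reserve $S_1$ of $\tau^*$ vertices per block;
it excludes pairs into reached vertices and into the current reserve;
and it releases a fresh reserve vertex of type $i$ each time a type-$i$ vertex is reached.
Every explored vertex then always sees the same number of available targets of each type. In the Poisson multigraph $(G^*,\pi^*)$ the $\boldsymbol F$-exploration is therefore an exponential race with constant per-type rates, hence exactly a weighted random recursive tree with urn weights (Lemma~\ref{PU}, Theorem~\ref{key:theorem}). It uses real edges with increasing root-to-vertex labels. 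Everything it misses is reachable only through $S_1$, which is $o(|B_{(G,\pi)}(v)|)$ by Lemma~\ref{upper:S}. Your route could perhaps be repaired with fictitious ghost vertices plus an independent extra coin equalising the discard probability across types, but some such device is indispensable.

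There is a second error at the start: the memorylessness you invoke is false. Knowing that $u$ was reached at time $t_u$ gives no reason to have $\pi'(uw)>t_u$. If the residual beyond $t_u$ were $\mathrm{Exp}(1)$, the rate of $uw$ at time $T$ would be $P_{ij}e^{-(T-t_u)}$. This favours recently added vertices, destroying the weighted-recursive parent rule. It would also make every present edge at $u$ eventually usable, so the process would explore the whole component of $v$.

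What holds is that, given the history, an unexplored pair has hazard $P_{ij}e^{-T}/(1-P_{ij}(e^{-t_u}-e^{-T}))$ at absolute time $T$. This is common to all active pairs up to a factor $1+O(\log n/n)$, and the resulting per-step error is summable over $n^{\theta(1+\epsilon)}$ steps because $\theta<1$. It is exactly what the paper removes by coupling with $(G^*,\pi^*)$ (Lemma~\ref{lemma:coupling_G} via Claim~\ref{claim:calcul_Y}), and it is missing from your list (a)--(c). With this fix, your $\theta<1/2$ argument is a valid alternative to the paper's directed sequence $\boldsymbol J$: a per-step coupling with total error $O(|S|^2/n)$, followed by a first-moment count of extra edges inside $B_{(G,\pi)}(v)$.

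Finally, you import the lower bound $|B_{(G,\pi)}(v)|\ge n^{(1-\epsilon)\theta}$ from Theorem~\ref{thm_reachability} to turn an expected loss into $o(|B_{(G,\pi)}(v)|)$. In the paper that bound is itself derived from this construction (Lemma~\ref{concentration_labels}); only the upper bound is proved independently.
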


Proposition \ref{proposition_subcritical_structure} provides a structural description of the subgraph induced by the reachable set in the subcritical regime. In particular, up to a negligible fraction of vertices, $G[B_{(G,\pi)}(v)]$ can be represented as a weighted random recursive tree with increasing root-to-vertex paths. This representation provides insight on how the infection evolves generation by generation, as well as how long the infection chains are in this process. These results are used to derive bounds on the length of extremal increasing paths. 

\begin{remark} In the special case of Erd\H os-R\'enyi random graphs, that is when all entries of $P$ are equal to $c\log n/n$, when $c<1/2$ it follows that with high probability $B_{(G,\pi)}(v)$ \emph{is} a tree distributed as a uniform random recursive tree, thus strengthening the result from \cite{BKL24}, that states that one may \emph{embed} a uniform random recursive tree of size \emph{only} $n^{(1-\epsilon)c}$.\end{remark}


The following result completes the study of the first order asymptotics of the size and composition of the reachable set of a fixed vertex in the temporal stochastic block model.

\begin{proposition}
\label{proposition_subcritical}
Suppose assumption \ref{ass1} holds, and suppose all connection probabilities are of order $\omega_n/n$. In the regime $\omega_n = o(\log n)$, if $\omega_n \to \infty$, then whp $$\log |B_{(G,\pi)}(v)| \sim_p \theta \omega_n.$$
Moreover, the induced subgraph $G[B_{(G,\pi)}(v)]$ is whp a tree isomorphic to a weighted random recursive tree of weights $\boldsymbol{W}$, and the proportions of vertices of each type $i \in [k]$, reached by $v$, satisfy $$ \frac{|B_{(G,\pi)}(v)\cap V_i|}{|B_{(G,\pi)}(v)|} \overset{\bP}{\longrightarrow} (\theta x_\theta)_i.$$
\end{proposition}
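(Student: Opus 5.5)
We will reduce to the same machinery used for Theorem \ref{thm_reachability}\ref{subcritical_reachability} and Proposition \ref{proposition_subcritical_structure}, with $\log n$ replaced by $\omega_n$ throughout. First we pass to the continuous-time representation: every pair $uv$ receives an independent $\mathrm{Exp}(1)$ label, and it is an edge with probability $P_{ij}=\lambda_{ij}\omega_n/n$. Equivalently, each pair carries a Poisson-type clock, and an edge of type $(i,j)$ with label in $[t,t+dt]$ appears at rate $\approx \lambda_{ij}\omega_n e^{-t}dt/n$. We then explore $B_{(G,\pi)}(v)$ through the first foremost tree. A reached vertex $u$ of type $j$, reached at time $t_u$, has an expected number of $\Lambda_{ij}\omega_n e^{-s}ds$ new type-$i$ neighbours with labels in $[s,s+ds]$ for $s>t_u$, as long as the explored set is $o(n)$. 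Time-changing by $\tau=\omega_n(1-e^{-t})$ turns this into a multitype Yule-type process in which a type-$j$ individual gives birth to type-$i$ children at rate $\Lambda_{ij}$, run until total time $\omega_n$. Its total size grows like $e^{\theta\tau}$, and its type proportions converge a.s.\ to the Perron eigenvector normalized in $\ell^1$, i.e.\ to $\theta x_\theta$. Evaluating at $\tau=\omega_n\to\infty$ gives $\log|B|\sim_p\theta\omega_n$ and the claimed proportions. The sequence of types of successively added vertices is exactly the P\'olya-urn sequence $\sigma(m)$. The parent of the $m$-th vertex is chosen proportionally to its total outgoing rate $\omega_{\sigma}$, independently of the shape, which gives the weighted random recursive tree $T_M(\boldsymbol W)$.

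The coupling is justified as in the $\log n$ case, sandwiching the true process between two idealized ones. The upper bound is a first-moment count of increasing paths of length $\ell$ from $v$. Their expected number is
$$\sum_\ell \frac{\omega_n^\ell}{\ell!}\,\boldsymbol 1^T\Lambda^\ell b_{i_0}\le C e^{(\theta+\epsilon)\omega_n}.$$
The lower bound couples with the branching process while at most $n^{o(1)}$ vertices are reached. In that range the depletion of available vertices only changes rates by a factor $1-o(1)$.

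Here the regime $\omega_n=o(\log n)$ is much easier than the $\log n$ case: $|B|=e^{(\theta+o(1))\omega_n}=n^{o(1)}$ whp. Hence $M=|B|$ exactly, and the tree statement is the $\theta<1/2$ argument of Proposition \ref{proposition_subcritical_structure} carried out with even more room. To show $G[B]$ is a tree we need two facts. First, there is no edge inside $B$ besides the tree edges: the expected number of edges among $|B|$ vertices is $O(|B|^2\omega_n/n)=n^{-1+o(1)}\to0$. This must be done carefully, conditioning on $B$ via the exploration, or by a first-moment bound on pairs of increasing paths from $v$ joined by an extra edge, each such configuration costing a factor $\omega_n/n$. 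Second, no vertex is reached by two distinct tree paths, which the same bound covers.

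The main obstacle is the concentration of the P\'olya urn and branching process proportions when the total time $\omega_n$ may grow arbitrarily slowly. We need $X(m)/m\to\theta x_\theta$ for the random $M=|B|\to\infty$. We will invoke the a.s.\ convergence for irreducible generalized P\'olya urns, where irreducibility of $\Lambda$ follows from assumption \ref{ass1}, together with $M\to\infty$ in probability. We also need the size asymptotic $\log M/\omega_n\to\theta$, which follows from $e^{-\theta\tau}|\text{population}(\tau)|\to W>0$ a.s., with $W>0$ on survival; survival is certain since individuals never die.
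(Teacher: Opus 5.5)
Your proposal is correct, but it follows a different route from the one the paper indicates.

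\textbf{The paper's route.} The paper only sketches this proposition. The upper bound is the first-moment bound of Section \ref{section_first_moments} with $\log n$ replaced by $\omega_n$, exactly as you do. The lower bound is declared an ``easier version'' of the $\log n$ case, which proceeds as follows:
\begin{enumerate}[label=(\alph*)]
\item couple $(G,\pi)$ with the Poisson multigraph $(G^*,\pi^*)$ via Claim \ref{claim:calcul_Y};
\item explore along a block-regular sequence $\boldsymbol{F}$;
\item identify the type sequence with a P\'olya urn (Lemma \ref{PU});
\item show the exploration survives for $e^{(1-\epsilon)\theta\omega_n}$ steps, using concentration of sums of exponential holding times (Lemma \ref{janson1});
\item obtain the tree structure from the directed-copy argument used for $\theta<1/2$.
\end{enumerate}

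\textbf{Your route.} You work with the same underlying object: after rescaling time by $\omega_n$, the increments in \eqref{eq:Z} are the holding times of your multitype Yule process, up to the factor $n_*/n$. The differences are these.
\begin{itemize}
\item You use $|B|=n^{o(1)}$ to couple the true exploration with the Yule process \emph{exactly}, up to a collision probability of order $|B|^2/n$.
\item You read the size off the martingale limit $e^{-\theta\tau}|N(\tau)|\to W>0$ instead of Lemma \ref{janson1}.
\item Only the proportions use the same urn theorem as the paper.
\end{itemize}

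\textbf{What each approach buys.} Your route dispenses with the multigraph, Claim \ref{claim:calcul_Y} and the block-regular sequences. It gives both bounds at once, in fact $\log|B|=\theta\omega_n+O_p(1)$. The tree structure and the weighted random recursive tree law then follow from the same coupling. The paper's heavier machinery is what is needed at $\omega_n=\log n$ with $1/2<\theta<1$, where $|B|^2/n\not\to 0$ and depletion must be controlled. In the present regime your approach is the natural simplification.

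\textbf{One point to make precise.} Replace the phrase that depletion ``only changes rates by a factor $1-o(1)$'' by a quantitative statement. A $1-o(1)$ rate perturbation alone gives only $\log|B|\sim_p\theta\omega_n$, by domination. To get $M=|B|$ and the exact tree law, you need the probability that the exploration and the Yule process ever disagree to be small. Disagreement comes from two sources:
\begin{itemize}
\item a Yule birth aimed at an already reached vertex;
\item the $1+O(\omega_n/n)$ hazard correction from each pair carrying at most one edge.
\end{itemize}
The total is $O(M^2/n)$ with $M=|N(\omega_n)|=n^{o(1)}$, hence $o(1)$. You essentially already have this bound.
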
 

Now that we have Theorem \ref{thm_reachability} and Proposition \ref{proposition_subcritical_structure} at our disposal, the arguments for establishing tight bounds on the longest and shortest increasing paths can be adapted from the Erd\H os–R\' enyi case (see \cite{BKL24}). We include them for completeness.

Let $\gamma_{\textrm{max}}$, $\gamma_{\textrm{max}}(v)$ and $\gamma_{\textrm{max}}(u,v)$ be the longest increasing path, the longest increasing path starting at $v$, and the longest increasing path starting at $u$ and ending in $v$ in $G$, respectively. Similarly, define $\gamma_{\min}(u,v)$ to be the shortest increasing path from $u$ to $v$, should it exist. 

\begin{theorem}\label{thm_longest_paths}
Suppose assumptions \ref{ass1} and \ref{ass2} hold. Fix $u,v \in V$. Then, 
\begin{enumerate}[label = (\roman*)]
    \item \label{path}$|\gamma_{\mathrm{max}}| \in (e\theta\log n,\alpha(\theta)\log n)$ whp, where $$\alpha(\theta)\coloneqq \inf\left\{x>0: \log\left(\frac{x}{e\theta}\right)=1 \right\}  ;$$
	\item \label{path_v}$|\gamma_{\mathrm{max}}(v)| \sim_p e\theta\log n$;
	\item \label{path_uv}if $\theta>1$, then $|\gamma_{\mathrm{max}}(u,v)| \sim_p \beta(\theta)\log n$ and $|\gamma_{\mathrm{min}}(u,v)| \sim_p \gamma(\theta)\log n,$, where
    $$\beta(\theta) \coloneqq \sup\left\{x>0:x\log\left(\frac{x}{e\theta}\right)=-1\right\},$$
    $$\gamma(\theta) \coloneqq \inf\left\{x>0:x\log\left(\frac{x}{e\theta}\right)=-1\right\}. $$
    However, if $\theta<1$, then whp there is no increasing path between $u$ and $v$. 
\end{enumerate}
\end{theorem}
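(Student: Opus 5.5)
Our plan follows the Erd\H{o}s--R\'enyi arguments of \cite{BKL24}: first-moment (union) bounds give the upper bounds, and the lower bounds come from Theorem \ref{thm_reachability} and Proposition \ref{proposition_subcritical_structure}. We work with i.i.d.\ $\mathrm{Exp}(1)$ labels $\pi'$ on all pairs, an edge $uv$ with $u\in V_i$, $w\in V_j$ being present with probability $\lambda_{ij}\log n/n$. The basic computation concerns increasing paths of length $\ell$ with prescribed endpoints. Summing over type sequences $(i_0,\dots,i_\ell)$ and vertex choices ($a_{i}n$ per type), the number of paths with given types times the edge probabilities gives $\prod_t a_{i_{t+1}}\lambda_{i_t i_{t+1}}(\log n)$ per step, that is, a product of entries of $\Lambda$. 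The ordering of the labels contributes the factor $1/\ell!$. Hence the expected number of increasing paths of length $\ell$ starting at $v$ is at most $(\log n)^\ell\,\boldsymbol{1}^T\Lambda^\ell b_{\tau(v)}/\ell!$. By Perron--Frobenius and irreducibility, $\boldsymbol{1}^T\Lambda^\ell b_i=\Theta(\theta^\ell)$ up to subexponential factors; the sub-exponential slack is enough if $\Lambda$ is periodic. Stirling then gives $\exp\bigl(\ell\log(e\theta\log n/\ell)+o(\ell)\bigr)$. With $\ell=x\log n$ this is $n^{x\log(e\theta/x)+o(1)}$.

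For the upper bounds: with a free start, multiply by $n$; the expectation is $n^{1-x\log(x/e\theta)+o(1)}$, which vanishes for $x>\alpha(\theta)$, where $\alpha$ is the root of $x\log(x/e\theta)=1$. (The displayed definition presumably intends $x\log$.) With $v$ fixed, the exponent $-x\log(x/e\theta)$ is negative exactly when $x>e\theta$, which gives $|\gamma_{\max}(v)|\le(1+\epsilon)e\theta\log n$. With both endpoints fixed, dividing by $n$ for the fixed end $u$ gives exponent $-1-x\log(x/e\theta)$, which is negative unless $x\in[\gamma(\theta),\beta(\theta)]$. This yields both the upper bound on $\gamma_{\max}(u,v)$ and the lower bound on $\gamma_{\min}(u,v)$. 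When $\theta<1$ the maximum of $x\log(e\theta/x)$ is $\theta<1$, so no $x$ works; summing over all $\ell$, and noting that $\ell\ge C\log n$ contributes negligibly, shows whp there is no increasing $u$--$v$ path. The lower bound $e\theta\log n$ in \ref{path} follows from \ref{path_v}.

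For the lower bound in \ref{path_v} when $\theta<1$, Proposition \ref{proposition_subcritical_structure} embeds $T_M(\boldsymbol W)$ with $M=n^{(1-o(1))\theta}$ and increasing root paths. The height of a weighted random recursive tree with weights in a finite set, whose weight sequence has empirical averages converging (via the P\'olya urn) to a constant, is $\sim e\log M$ by \cite{S18}. This gives $e\theta\log n$.

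For $\theta\ge1$ we would use a monotone coupling: thinning the edges by a factor $\theta'/\theta$ with $\theta'<1$ arbitrarily close to $1$ scales $\Lambda$, so $\theta'=\theta\cdot(\text{factor})$. This alone only gives $e\theta'\log n$, so instead we restrict to the edges with labels in a short initial window. In the exponential-label model, keeping only edges with $\pi'\le t$ yields an SBM with parameters $\lambda t$ whose ordering is preserved. We then concatenate: run the subcritical tree in successive disjoint time windows $[jt,(j+1)t]$. This is the standard \cite{BKL24} trick, which splits $[0,1]$ into windows and extends the deepest leaf each time, the spectral radius per window being $\theta t<1$. Summing the heights $e\theta t\log n$ over $1/t$ windows recovers $e\theta\log n$, provided the deepest leaf in each window is a typical starting vertex; we handle this by using fresh vertices and noting that there are $o(\log n)$ windows after sparsifying dependence.

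For \ref{path_uv} with $\theta>1$, fix $x\in(\gamma,\beta)$ and split time into two halves. From $u$, grow forward in $[0,1/2]$ a tree reaching $n^{1-\delta}$ vertices at depth about $y\log n$. From $v$, grow a tree backwards in time in $[1/2,1]$, reversing the labels. Then connect the two trees by a single edge at label $1/2$; the boundary sets have sizes whose product exceeds $n^{1+\delta}$, so such an edge exists whp. Choosing the depth profile via the same rate function, as in \cite{BKL24}, hits the lengths $\gamma\log n$ and $\beta\log n$. We expect this matching step to be the main obstacle: in the SBM one must control the types at the frontier, via the urn limit $\theta x_\theta$, and the depth distribution simultaneously. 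We would first try to reuse Proposition \ref{proposition_subcritical_structure} on time-restricted subgraphs and the profile results of \cite{S18}.
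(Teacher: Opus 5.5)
Your upper bounds are the paper's argument: the first-moment path counts of Lemma \ref{lemma:nb_paths}, including the no-path statement for $\theta<1$. So is your lower bound in \ref{path_v}, which uses the height of the tree embedded by Proposition \ref{proposition_subcritical_structure} together with \cite{S18} for $\theta<1$, and concatenation over subcritical time windows (Proposition \ref{layering}) for $\theta\ge 1$. You are also right that $\alpha(\theta)$ must be the root of $x\log(x/e\theta)=1$, which is what the proof actually uses.

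The gap is in the constructive half of \ref{path_uv}, i.e.\ the lower bound on $|\gamma_{\max}(u,v)|$ and the upper bound on $|\gamma_{\min}(u,v)|$. There the quantitative target you set cannot be met. Apply Lemma \ref{lemma:nb_paths} and Markov to the half-model with matrix $P/2$. Whp the number of vertices reached from $u$ by increasing paths of length $\frac{x}{2}\log n$ is at most $n^{\frac{x}{2}\log(e\theta/x)+o(1)}$. This exponent tends to $\frac12$ as $x\to\gamma(\theta)$ or $x\to\beta(\theta)$. So depth-sets of size $n^{1-\delta}$ do not exist precisely for the lengths you need. Moreover, for $\theta<2$ the whole half-window reachable set has only $n^{\theta/2+o(1)}$ vertices. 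Your other criterion, that the product of the two sizes exceed $n^{1+\delta}$, is the correct one.

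The key lemma to state is the one the paper isolates, the analogue of Lemma 15 of \cite{BKL24}. For $x\in(\gamma(\theta),\beta(\theta))$ there is $\delta>0$ such that, in the TSBM with matrix $P/2$, whp at least $n^{1/2+\delta}$ vertices are reached from a fixed vertex by increasing paths of length $\frac{x}{2}\log n\pm(\log n)^{1/2}$.
\begin{itemize}
\item For $\theta<2$ this follows from the profile theorem of \cite{S18} applied to the embedded weighted random recursive tree. Its profile at depth $\frac{x}{2}\log n$ is $n^{\frac{x}{2}\log(e\theta/x)+o(1)}$, and $\frac{x}{2}\log(e\theta/x)>\frac12$ exactly when $x\in(\gamma(\theta),\beta(\theta))$. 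This is where the rate function enters.
\item For $\theta\ge 2$ each half is supercritical, so Proposition \ref{proposition_subcritical_structure} does not apply. You must first decompose further with Proposition \ref{layering}; your sketch does not address this case.
\end{itemize}

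The paper then joins the halves through a common vertex of the two depth-sets by the birthday argument, as in Section 5 of \cite{BKL24}. Your single connecting edge also works, but not ``at label $1/2$'' if the two trees already use all of $[0,1/2]$ and $[1/2,1]$. You must reserve a middle window of labels for that edge. Then $|A||B|\ge n^{1+\delta}$ gives $\Omega(|A||B|\log n/n)\to\infty$ candidate edges. Your worry about block types is legitimate: the two depth-sets must be large in compatible blocks. The paper leaves this point implicit.
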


Notice that the lower bound in \ref{path} follows directly from \ref{path_v}. We expect the upper bound in \ref{path} to be tight, mirroring the result on Erd\H os-R\'enyi random graphs. However, deriving such a matching lower bound would require substantially more intricate arguments, and we do not pursue it here.

The reader is referred to \cite{BKL24} for a detailed discussion of the quantities $\alpha(\theta)$, $\beta(\theta)$ and $\gamma(\theta)$, as well as for the intuition underlying these results in the Erd\H os–R\'enyi case, on which our arguments are based.

\begin{example}\label{example_explicit}
In general, we cannot compute the parameter $\theta$ explicitly. However, in a handful of special cases, we obtain the following expressions.
\begin{itemize}
	\item $k=2$: 
	\begin{equation*}
	\Lambda =
	\begin{pmatrix}
	a_1p_1 & a_1q \\
	a_2q & a_2 p_2
	\end{pmatrix},
	\end{equation*}
	where, to avoid clutter, we change notation $\lambda_{ii} = p_i$ and $\lambda_{12} = \lambda_{21} = q$. The largest eigenvalue of $\Lambda$ is
	$$\theta = \frac{1}{2}\left(a_1 p_1+a_2 p_2 + \sqrt{(a_1 p_1-a_2 p_2)^2+4a_1 a_2 q^2}\right),$$
	and its associated eigenvector is $x_\theta \propto (a_1q,\theta - a_1p_1)$, which is, generally, different than $\boldsymbol{a}$.

	\item $a_i = 1/k$ and $\lambda_{ij} = q+\indicatrice_{i=j}(p-q)$ where $p,q > 0$ and $p\neq q$: 
	\begin{equation*}
	\Lambda = \frac{1}{k}
	\begin{pmatrix}
	p & q & \ldots & q \\
	q & p &  & \vdots \\
	\vdots &  & \ddots\textrm{ } & q \\
	q & \ldots & q & p
	\end{pmatrix}.
	\end{equation*}
	In this case, the largest eigenvalue is known to be (see \cite{A18}, Section 2.4)
	$$ \theta = (p+(k-1)q)/k. $$
	The associated eigenvector is $x_\theta = \boldsymbol{a}$. 
\end{itemize}
\end{example}

\subsection{Outline of the paper}

The proofs of the upper bounds in Theorem \ref{thm_reachability} and Proposition \ref{proposition_subcritical}, as well as the upper bounds for longest paths and lower bounds for shortest paths in Theorem \ref{thm_longest_paths} consist of applying the first moment method, and can be found in Section \ref{section_first_moments}.

The core of the paper is devoted to the analysis of the subcritical regime. We prove Proposition \ref{proposition_subcritical_structure}, the lower bounds in Theorem \ref{thm_reachability}, and Proposition \ref{proposition_subcritical} in Section \ref{section_subcritical}. 
The argument relies on a coupling with an auxiliary temporal random multigraph with exponentially distributed edge labels, together with a constructive exploration procedure. In Section \ref{section:proof_key_thm}, we prove the desired results for the auxiliary model.

The supercritical case is handled by a layering argument, combining subcritical components to show that the reachable set is of linear size.
Finally, the bounds on increasing paths in Theorem \ref{thm_longest_paths} follow from known results on weighted random recursive trees together with standard layering arguments. Section \ref{discussion} discusses extensions of the model and related open problems.

\section{First moment method}
\label{section_first_moments}

Let $(G,\pi)$ be a TSBM where $G \sim \bS\bB\bM(n,k,\boldsymbol{a},P)$. Define $\sigma : V \to [k]$ to be the map that assigns to a vertex $v \in V$ the block to which it belongs, that is, if $v\in V_i$, then $\sigma(v)=i$. Recall that an increasing path of size $\ell \in \bN$ is a sequence of $\ell+1$ distinct vertices $(w_1,\ldots,w_{\ell+1})$ such that $\pi(w_iw_{i+1}) < \pi(w_{i+1}w_{i+2})$ for all $i \in [\ell-1]$. For $\ell \in \bN$, let $X_\ell$ be the number of increasing paths of length $\ell$ in $(G,\pi)$. Similarly, for $v\in V$, let $Y_\ell(v)$ be the number of increasing paths of length $\ell$ starting at $v$. Lastly, for $u,v  \in V$, denote by $Z_\ell(u,v)$ the number of increasing paths of length $\ell$ starting at $u$ and ending at $v$. 

\begin{lemma}
\label{lemma:nb_paths}
For all $u,v \in V$ and for all $\ell \in \bN$,
\begin{equation*}
\begin{split}
\bE[X_\ell] &\leq  \frac{n (\log n)^\ell }{\ell!} \times (\boldsymbol{a}^T \cdot \Lambda^\ell \cdot \boldsymbol{1}), \\
\bE[Y_\ell(u)] &\leq \frac{(\log n)^\ell }{\ell!} \times (b_{\sigma(u)}^T \cdot \Lambda^\ell \cdot \boldsymbol{1}),\\
\bE[Z_\ell(u,v)] &\leq \frac{(\log n)^\ell }{a_{\sigma(v)}n\ell!} \times (b_{\sigma(u)}^T \cdot \Lambda^\ell \cdot b_{\sigma(v)}),
\end{split}
\end{equation*}
where $(b_i)_{i \in [k]}$ is the canonical basis of $\mathbb{R}^k$ and $\boldsymbol{1} = (1,\ldots,1)^T \in \mathbb{R}^k$. 
\end{lemma}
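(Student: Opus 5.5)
We will prove all three bounds by direct first-moment computation: sum over sequences of distinct vertices, and factor the probability that a given sequence is an increasing path.

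Fix a sequence $(w_1,\ldots,w_{\ell+1})$ of distinct vertices. Its $\ell$ edges are distinct pairs, so they are present independently, with probability $\prod_{i=1}^{\ell} P_{\sigma(w_i)\sigma(w_{i+1})}$. Conditionally on all of them being present, their labels are the restriction of a uniform random permutation of $E(G)$ to these $\ell$ edges. Hence all $\ell!$ relative orders are equally likely, and the path is increasing with probability exactly $1/\ell!$. (Equivalently, with the i.i.d.\ exponential labels $\pi'$, the probability is again $1/\ell!$ and independent of $G$.) Therefore
\[
\bP[(w_1,\ldots,w_{\ell+1}) \text{ is an increasing path}] = \frac{1}{\ell!}\prod_{i=1}^{\ell} \frac{\lambda_{\sigma(w_i)\sigma(w_{i+1})}\log n}{n}.
\]

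Next we group sequences by their type sequence $(i_1,\ldots,i_{\ell+1})\in[k]^{\ell+1}$. The number of vertex sequences with $\sigma(w_j)=i_j$ for all $j$ is at most $\prod_j a_{i_j} n$, since we drop the distinctness constraint and take $|V_i|=a_in$. Summing over type sequences gives
\[
\bE[X_\ell] \le \frac{(\log n)^\ell}{\ell!} \sum_{i_1,\ldots,i_{\ell+1}} a_{i_1} n \prod_{j=1}^{\ell} a_{i_{j+1}}\lambda_{i_j i_{j+1}}.
\]
Since $\lambda$ is symmetric, $a_{i_{j+1}}\lambda_{i_j i_{j+1}} = \Lambda_{i_{j+1} i_j}$. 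The sum is therefore a product of $\Lambda$ entries along the chain $i_{\ell+1}\to i_\ell\to\cdots\to i_1$, weighted by $a_{i_1}$, which equals $\boldsymbol{1}^T\Lambda^\ell \boldsymbol{a}\cdot n$. Reversing the path direction, or transposing and using the symmetry of $\lambda$, rewrites this as $n\,\boldsymbol{a}^T\Lambda^\ell\boldsymbol{1}$: indeed $a_{i_1}\prod_j a_{i_{j+1}}\lambda_{i_ji_{j+1}} = a_{i_{\ell+1}}\prod_j a_{i_j}\lambda_{i_ji_{j+1}}$, a chain of $\Lambda_{i_j i_{j+1}}$ starting from $a_{i_1}$. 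Summing gives exactly $\boldsymbol{a}^T\Lambda^\ell\boldsymbol{1}$, which proves the bound on $\bE[X_\ell]$.

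For $Y_\ell(u)$ we fix $w_1=u$, so the first vertex contributes no factor $a_{i_1}n$, and $i_1=\sigma(u)$. The chain product $\prod_j \Lambda_{i_{j+1}i_j}$ summed over $i_2,\ldots,i_{\ell+1}$ gives $\boldsymbol{1}^T\Lambda^\ell b_{\sigma(u)}$. Converting to the stated orientation $b_{\sigma(u)}^T\Lambda^\ell\boldsymbol{1}$ needs care, because $\Lambda$ is not symmetric. The relevant identity is $\Lambda^T = D_a^{-1}\Lambda D_a$ with $D_a=\mathrm{diag}(\boldsymbol{a})$. I expect this bookkeeping of which index carries the $a$-factor to be the only real subtlety. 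The resolution is to write the weight as $\prod_j \Lambda_{i_j i_{j+1}}\cdot a_{i_{\ell+1}}/a_{i_1}$ in the $Z$ case, where the endpoint is fixed.

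For $Z_\ell(u,v)$ we fix both endpoints, so only the $\ell-1$ intermediate vertices are counted. The weight becomes $\prod_{j}\lambda_{i_ji_{j+1}}\prod_{j=2}^{\ell}a_{i_j}\cdot n^{-1}$. Multiplying and dividing by $a_{\sigma(v)}$ turns this into $\frac{1}{a_{\sigma(v)}n}\prod_j \Lambda_{i_{j+1}i_j}$, where the type of $v$ is fixed. Summing over intermediate types gives the matrix entry $b_{\sigma(u)}^T\Lambda^\ell b_{\sigma(v)}$, up to the transpose convention. Since $\Lambda$ and $\Lambda^T$ are conjugate via $D_a$, I will check that the chosen convention, $\Lambda_{ij}=a_i\lambda_{ij}$ with row index the new vertex's type, matches the stated expressions in each case. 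If needed, I will reorient the path (increasing paths reversed are decreasing, which has the same probability $1/\ell!$) so that the formulas read exactly as stated.
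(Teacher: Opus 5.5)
\textbf{There is a genuine gap, in the orientation step you deferred.} Your strategy is sound: sum over type sequences, get $1/\ell!$ from exchangeability of the labels, and drop distinctness. It is the paper's computation unrolled. The paper proves the $Z$-bound by induction on $\ell$ for the count $Z^*_\ell(u,v)$ of not-necessarily-increasing paths, then sums over endpoints to get the $Y$- and $X$-bounds. The deferred step, however, is not just bookkeeping.

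With your convention $\Lambda_{ij}=a_i\lambda_{ij}$, i.e.\ $\Lambda=D_a\lambda$ with $D_a=\mathrm{diag}(\boldsymbol a)$, your sums correctly evaluate (up to the prefactor $(\log n)^\ell/\ell!$) to:
\begin{itemize}
\item $n\,\boldsymbol 1^T\Lambda^\ell\boldsymbol a$ for $X_\ell$;
\item $\boldsymbol 1^T\Lambda^\ell b_{\sigma(u)}$ for $Y_\ell(u)$;
\item $\frac{1}{a_{\sigma(v)}n}\,b_{\sigma(v)}^T\Lambda^\ell b_{\sigma(u)}$ for $Z_\ell(u,v)$.
\end{itemize}
Neither tool you propose converts these into the stated forms. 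Conjugation gives $\boldsymbol 1^T\Lambda^\ell b_{\sigma(u)}=a_{\sigma(u)}^{-1}\,b_{\sigma(u)}^T\Lambda^\ell\boldsymbol a$. Path reversal is a bijection, so it cannot change the value.

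In fact, in that convention the second and third inequalities are false. Take $k=2$, $\boldsymbol a=(0.1,0.9)$, $\lambda_{12}=\lambda_{21}=1$, $\lambda_{11}=\lambda_{22}=0$, $u\in V_1$, $v\in V_2$, $\ell=1$.
\begin{itemize}
\item $\bE[Y_1(u)]=|V_2|\log n/n=0.9\log n$, but $b_1^T\Lambda\boldsymbol 1\,\log n=0.1\log n$.
\item $\bE[Z_1(u,v)]=\log n/n$, but the stated bound is $\frac{0.1}{0.9}\cdot\frac{\log n}{n}$.
\end{itemize}

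Your ``indeed'' identity for $X_\ell$ is true, but it is misread. Its right side sums to $\sum \Lambda_{i_1i_2}\cdots\Lambda_{i_\ell i_{\ell+1}}a_{i_{\ell+1}}=\boldsymbol 1^T\Lambda^\ell\boldsymbol a$ again, because the $a$-weight sits at the end of the chain, not the start. This differs from $\boldsymbol a^T\Lambda^\ell\boldsymbol 1$ already for $\ell=1$: $\sum_{i,j}a_ia_j\lambda_{ij}$ versus $\sum_{i,j}a_i^2\lambda_{ij}$. Here the stated inequality happens to survive, by symmetrizing through $D_a^{1/2}\lambda D_a^{1/2}$ and AM--GM, but not by the equality you claim.

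\textbf{The repair} is to use the transposed convention $\Lambda_{ij}=a_j\lambda_{ij}$, i.e.\ $\Lambda=\lambda D_a$. This is what the paper's proof actually uses: its base case $\bE[Z^*_1(u,v)]=\frac{\log n}{a_{\sigma(v)}n}\Lambda_{\sigma(u)\sigma(v)}$ holds only in this convention, and Section~\ref{section:proof_key_thm} restates $\Lambda_{ij}=a_j\lambda_{ij}$. The introduction's definition is its transpose, which has the same Perron eigenvalue $\theta$.

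With $\Lambda=\lambda D_a$ you have $a_{i_{j+1}}\lambda_{i_ji_{j+1}}=\Lambda_{i_ji_{j+1}}$ directly. Your type-sequence sums are then exactly $n\,\boldsymbol a^T\Lambda^\ell\boldsymbol 1$ and $b_{\sigma(u)}^T\Lambda^\ell\boldsymbol 1$. After pulling out $a_{i_{\ell+1}}=a_{\sigma(v)}$, the third is $\frac{1}{a_{\sigma(v)}n}\,b_{\sigma(u)}^T\Lambda^\ell b_{\sigma(v)}$. No symmetry of $\lambda$, conjugation, or reversal is needed, and the rest of your argument goes through unchanged.
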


\begin{proof}
We first prove the third identity, since the other two can be easily derived from it. Fix $u,v\in V$ and denote by $Z^*_\ell(u,v)$ the number of paths (not necessarily increasing) of $\ell$ edges starting at $u$ and ending in $v$ in $G$. Notice that, since $G$ and $\pi$ are independent, for all $\ell \in \bN$,
$$\bE[Z_\ell(u,v)] = \frac{1}{\ell!} \bE[Z^*_\ell(u,v)]. $$
Hence, it suffices to show that
\begin{equation}
\label{ineq:induction}
\bE[Z^*_\ell(u,v) ]\leq \frac{(\log n)^\ell }{a_{\sigma(v)}n} \times (b_{\sigma(u)}^T \cdot \Lambda^\ell \cdot b_{\sigma(v)}). \end{equation}
We proceed by induction on $\ell \in \bN$. In the case $\ell =1$, the expected number of paths starting from $u$ and ending in $v$ is simply the probability of the edge $uv$ being present in the graph, that is,
$$\bE[Z^*_1(u,v)] = \frac{\lambda_{\sigma(u)\sigma(v)}\log n}{n} = \frac{\log n}{a_{\sigma(v)}n}\times (b^T_{\sigma(u)}\cdot \Lambda\cdot b_{\sigma(v)}). $$
Assuming that (\ref{ineq:induction}) holds for $\ell \in \bN$, we compute the expectation of $Z^*_{\ell+1}(u,v)$ by counting possible extensions of paths of length $\ell$ by an edge. For $z\in V\backslash\{u,v\}$, denote by $Z^*_\ell(u,z;v)$ the number of paths of length $\ell$ from $u$ to $z$, not containing $v$. Observe that $Z^*_{\ell}(u,z;v)$ and $Z^*_1(z,v)$ are independent. Thus,
\begin{equation*}
\begin{split}
    \bE[Z^*_{\ell+1}(u,v)] &= \sum_{z\in V\backslash\{u,v\}}\bE\left[  Z^*_\ell(u,z;v)  \right]\bE[Z^*_1(z,v)] \leq \sum_{z\in V}\bE\left[  Z^*_\ell(u,z)  \right]\bE[Z^*_1(z,v)]\\
    &\leq \sum_{z\in V} \frac{(\log n)^{\ell} }{a_{\sigma(z)}n} \times (b_{\sigma(u)}^T \cdot \Lambda^\ell \cdot b_{\sigma(z)}) \times \frac{ \Lambda_{\sigma(z)\sigma(v)}\log n}{a_{\sigma(v)}n}\\
    &= \frac{(\log n)^{\ell+1}}{a_{\sigma(v)}n} \sum_{i=1}^k \frac{|V_i|}{a_in} \times (\Lambda^\ell)_{\sigma(u)i} \Lambda_{i\sigma(v)} \\
    &= \frac{(\log n)^{\ell+1} }{a_{\sigma(v)}n} \times (b_{\sigma(u)}^T \cdot \Lambda^{\ell+1} \cdot b_{\sigma(v)}), 
\end{split}
\end{equation*}
where we used the induction hypothesis in the first inequality. This completes the proof of the upper bound on $\bE[Z_\ell(u,v)]$. Now, observe that
\begin{equation*}
    \begin{split}
        \bE[Y_\ell(u)] &\leq \sum_{v\in V} \bE[Z_\ell(u,v)] \leq \frac{(\log n)^\ell}{\ell!}\sum_{i=1}^k \frac{|V_i|}{a_i n} (\Lambda_{\sigma(u)i}) = \frac{(\log n)^\ell }{\ell!} \times (b_{\sigma(u)}^T \cdot \Lambda^\ell \cdot \boldsymbol{1}) , \\
        \bE[X_\ell] &\leq \sum_{u \in V}\bE[Y_{\ell}(u)] \leq \frac{(\log n)^\ell}{\ell!} \left(\sum_{i=1}^k |V_i|b_i \cdot \Lambda^\ell \cdot \boldsymbol{1} \right)  = \frac{n (\log n)^\ell }{\ell!} \times (\boldsymbol{a}^T \cdot \Lambda^\ell \cdot \boldsymbol{1}),
    \end{split}
\end{equation*}
which concludes the proof.
\end{proof}

The following inequalities will be used throughout this section. For any non-negative vectors $w_1,w_2 \in \mathbb{R}^k$ and $\ell = \ell(n) \to \infty$, we have that
\begin{equation}
\label{gelfands}
w_1 \cdot \Lambda^\ell \cdot w_2 \leq \|w_1 \| \|\Lambda^\ell \| \| w_2\| \leq \|w_1\| \theta^\ell \|w_2\|,
\end{equation}
where we applied Gelfand's formula (see Theorem 5.6.9, \cite{HJ85}). Furthermore, there exists $C>0$ such that
\begin{equation}
\label{exponential}
\|\textrm{exp}(\Lambda)\| \leq C e^\theta, 
\end{equation}
which can be found in \cite{J04}, Equation 2.12.

\begin{proof}[Proof of the upper bounds in Theorem \ref{thm_reachability} and Proposition \ref{proposition_subcritical}]
In the case $\theta > 1$, there is nothing to prove since the size of the reachable set cannot exceed $n$. Suppose $\theta < 1$ and let $v \in V_i$. Recall that $w \in B_{(G,\pi)}(v)$ if and only if there is an increasing path from $v$ to $w$. Hence, by the union bound, for any $N\in \bN$,
$$\bP[|B_{(G,\pi)}(v)| \geq N] \leq \bP\left[\sum_{\ell \geq 1} Y_\ell(v) \geq N\right] \leq \frac{1}{N}\sum_{\ell \geq 1} \bE[Y_\ell(v)] \leq \frac{1}{N} b^T_{\sigma(v)} \cdot e^{\Lambda\log n}\cdot \boldsymbol{1},$$
where we used Markov's inequality and Lemma \ref{lemma:nb_paths}. Using the inequality from (\ref{exponential}) yields
$$\bP[|B_{(G,\pi)}(v)|\geq N] \leq \frac{1}{N}\|b_{\sigma(v)}\|\times \|\textrm{exp}(\Lambda \log n)\|\times\|\boldsymbol{1}\| \leq \frac{C\sqrt{k}e^{\theta \log n}}{N}. $$
Setting $N = e^{(1+\epsilon)\theta \log n}$ yields the upper bound of Theorem \ref{thm_reachability}\ref{subcritical_reachability}. The lower bound in Proposition \ref{proposition_subcritical} is derived using the same arguments, replacing $\log n$ by the general $\omega_n$. \end{proof}

\begin{proof}[Proof of the upper bounds for longest paths in Theorem \ref{thm_longest_paths}]\hfill \\
We start with \ref{path}. For any $\ell \in \bN$,
$$\bP[|\gamma_{\max}|\geq \ell] \leq \bP[X_\ell \geq 1] \leq \bE[X_\ell], $$
by Markov's inequality. Hence, by (\ref{gelfands}) and Stirling's formula,
$$\bE[X_\ell] \leq \frac{n(en \theta\log n)^\ell}{\ell^\ell}, $$
so setting $\ell = \lceil a \log n\rceil$ for any $a > \alpha(\theta)$, we obtain
$$\bP[|\gamma_{\max}|\geq a \log n] \leq n^{1+a\log \frac{e\theta}{a}}= o(1), $$
since, by definition of $\alpha(\theta)$, $1+a\log\frac{e\theta}{a}<0 $. We prove \ref{path_v} the same way. Fix $v \in V$. By Markov's inequality,
$$\bP[|\gamma_{\max}(v)|\geq \ell] \leq \bP[Y_\ell(v) \geq 1] \leq \bE[Y_\ell(v)] \leq \sqrt{k} \left(\frac{e\theta\log n}{\ell}\right)^\ell = o(1),$$
when $\ell = \lceil (1+\epsilon)e\theta\log n \rceil$ for some small $\epsilon > 0$. Finally, we show \ref{path_uv}. Suppose $\theta > 1$ and let $\ell = \lceil (1+\epsilon)\beta(\theta)\log n \rceil$. Then, by the union bound and Markov's inequality,
$$\bP[|\gamma(u,v)|\geq \ell] \leq \sum_{h\geq \ell} \bE[Z_h] = O\left( \frac{1}{n}\sum_{h\geq \ell} \frac{(\theta \log n)^h}{h!} \right), $$
where the last inequality follows from Lemma \ref{lemma:nb_paths} and \eqref{gelfands}. Observe that
$$\frac{(\theta\log n)^{h+1}/(h+1)!}{(\theta\log n)^h/h!} = \frac{\theta\log n}{h+1} \leq \frac{\theta\log n}{\ell}\sim\frac{\theta}{(1+\epsilon)\beta(\theta)}<1. $$
Hence,
$$\frac{1}{n}\sum_{h\geq \ell} \frac{(\theta \log n)^h}{h!} = O\left( \frac{(\theta \log n)^\ell}{n \ell!} \right) . $$
By Stirling's formula and the definition of $\beta(\theta)$,
$$\frac{(\theta \log n)^\ell}{n \ell!} \leq n^{-1}\left(\frac{e\theta \log n }{\ell} \right)^\ell = n^{-1} \left(\frac{e\theta}{a_n} \right)^{a_n \log n} = n^{a_n\log\left(\frac{e\theta}{a_n}\right)-1} = o(1),  $$
where we set $a_n = \ell / \log n > \beta(\theta)$. We deduce that for all $\epsilon > 0$,
$$\bP[|\gamma(u,v)|\geq (1+\epsilon)\beta(\theta)\log n]=o(1), $$
which concludes the proof. However, when $\theta <1$, by union bound and Markov's inequality,
$$\bP[|\gamma_{\max}(u,v)|\geq 1]\leq \sum_{\ell\geq 1}\bP[Z_\ell(u,v)\geq 1] \leq \sum_{\ell \geq 1} \bE[Z_\ell(u,v)] =  b^T_{\sigma(u)} \cdot \left(\frac{1}{n}\sum_{\ell\geq 1} \frac{(\Lambda \log n)^\ell}{\ell!}  \right) \cdot b_{\sigma(v)}. $$
Observe that the sum above is the exponential of the matrix $\Lambda \log n$. Now, we use (\ref{exponential}) to deduce that
$$\bP[|\gamma_{\max}(u,v)|\geq1] \leq \frac{1}{n}\|b_{\sigma(u)}\|\times\|\textrm{exp}(\Lambda \log n)\|\times\|b_{\sigma(v)}\| \leq Cn^{\theta-1} = o(1),$$
since $\theta < 1$. \end{proof}

\begin{proof}[Proof of the lower bounds on shortest paths in Theorem \ref{thm_longest_paths}]\hfill \\
By Markov's inequality, we have for any $\ell \in \bN$,
$$ \bP[|\gamma_{\min}(u,v)| \leq \ell] = \bP\left[\sum_{h=1}^\ell Z_h(u,v) \geq 1 \right] \leq \sum_{h=1}^\ell \bE[Z_h(u,v)].$$
Using Gelfand's formula, for any $\epsilon > 0$, there exists $C=C(\epsilon)$ such that
$$ |b_i \cdot \Lambda^h \cdot b_j |\leq \|\Lambda^h\|_2 \leq C (\theta + \epsilon)^h.$$
Thus, it follows from Lemma \ref{lemma:nb_paths} and Stirling's formula that
$$ \bP[|\gamma_{\min}(u,v)| \leq \ell] \leq \frac{C}{a_{\max} n} \sum_{h=1}^{\ell} \frac{((\theta+\epsilon)\log n)^h}{h!} \leq \frac{C}{a_{\max} n} \sum_{h=1}^{\ell} \left(\frac{(\theta+\epsilon)e\log n}{h}\right)^h.$$
If $\ell \leq (1-\epsilon)\gamma(\theta+\epsilon)\log n$, then each term of the sum may be bounded by
$$ \left(\frac{(\theta+\epsilon)e\log n}{h}\right)^h \leq \left(\frac{(\theta+\epsilon)e}{(1-\epsilon)\gamma(\theta+\epsilon)}\right)^{(1-\epsilon)\gamma(\theta+\epsilon)\log n} = n^{(1-\epsilon)\gamma(\theta+\epsilon)\log\frac{(\theta+\epsilon)e}{(1-\epsilon)\gamma(\theta+\epsilon)}},$$
since the left-hand side is an increasing function of $h$ for $h \leq \gamma(\theta+\epsilon)\log n$. Now, since the function $x\mapsto x\log((\theta+\epsilon)e/x)$ is strictly convex and increasing for $x \leq \gamma(\theta+\epsilon)$, 
$$ (1-\epsilon)\gamma(\theta+\epsilon)\log\frac{(\theta+\epsilon)e}{(1-\epsilon)\gamma(\theta+\epsilon)} < \gamma(\theta+\epsilon)\log\frac{(\theta+\epsilon)e}{\gamma(\theta+\epsilon)},$$
and, thus, there exists $\epsilon^* > 0$ such that 
$$ \bP[|\gamma_{\min}(u,v)| \leq (1-\epsilon)\gamma(\theta+\epsilon)\log n] = O(n^{-\epsilon^*}),$$
where we applied Markov's inequality. Since this is true for any choice of $\epsilon$, we can deduce the desired result.
\end{proof}

We additionally bound the size of the set $B_{(G,\pi)}(v,S)$ of vertices $u$ reached by $v$ via increasing paths with the constraint that they pass through a given subset $S\subseteq V$, not containing $v$. 
\begin{lemma}
\label{upper:S}
Let $v\in V$ and $S\subseteq V\backslash \{v\}$. Then, whp $|B_{(G,\pi)}(v,S)| = O(|S|\log n \textrm{ }n^{\theta(1+\epsilon)})$.  
\end{lemma}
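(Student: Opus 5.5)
Under $\theta<1$, the plan is a first moment bound on increasing paths through $S$, in the same way as the upper bound in Theorem \ref{thm_reachability}, followed by Markov's inequality.

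\textbf{Reduction to path counts.} If $u\in B_{(G,\pi)}(v,S)$, then there is an increasing path $v=w_1,\dots,w_{\ell+1}=u$ with $w_j=s$ for some $s\in S$ and some $j\ge 2$. Its initial segment is an increasing path from $v$ to $s$ of length $h=j-1\ge1$. Its final segment is a path from $s$ of length $\ell-h\ge 0$, consisting of the distinct vertices $w_j,\dots,w_{\ell+1}$. Hence
$$|B_{(G,\pi)}(v,S)|\le \sum_{s\in S}\sum_{h\ge1}\sum_{m\ge0} W_{h,m}(v,s),$$
where $W_{h,m}(v,s)$ counts increasing paths of length $h+m$ from $v$ passing through $s$ at step $h$.

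\textbf{Bounding the expectation.} The whole concatenated path is a single path in $G$ with $h+m$ distinct edges. Its probability of being increasing is therefore $1/(h+m)!$, not the product of the two separate factorials. We bound the expected number of underlying (not necessarily increasing) paths exactly as in the proof of Lemma \ref{lemma:nb_paths}, i.e.\ inductively by extending one edge at a time. Since $s$ is a fixed intermediate vertex rather than a summed one, the first $h$ steps contribute the $Z^*$ bound and the remaining $m$ steps the $Y$-type bound. This gives
$$\bE[W_{h,m}(v,s)]\le \frac{(\log n)^{h+m}}{(h+m)!}\cdot\frac{1}{a_{\sigma(s)}n}\,(b_{\sigma(v)}^T\Lambda^h b_{\sigma(s)})(b_{\sigma(s)}^T\Lambda^m\boldsymbol 1).$$

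\textbf{Summation.} Writing $\ell=h+m$, we use \eqref{gelfands} in the form $\|\Lambda^h\|\le C(\theta+\epsilon)^h$ (valid for all $h$ with a constant $C=C(\epsilon)$). The inner sum over the split is at most
$$\ell\, C^2(\theta+\epsilon)^\ell.$$
Summing over $\ell$ then gives
$$\sum_{\ell}\ell\,\frac{((\theta+\epsilon)\log n)^\ell}{\ell!}=O\big(\log n\cdot n^{\theta+\epsilon}\big).$$
The factor $\log n$ comes precisely from $\ell$ choices of the position of $s$, since $\sum_\ell \ell x^\ell/\ell! = x e^x$. Multiplying by the $1/n$ factor and summing over $s\in S$ yields
$$\bE|B_{(G,\pi)}(v,S)|=O\big(|S|\log n\, n^{\theta+\epsilon-1}\big),$$
which is even smaller than required.

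\textbf{Conclusion and main obstacle.} Markov's inequality with threshold $|S|\log n\,n^{\theta(1+\epsilon)}$ gives probability $o(1)$, after adjusting $\epsilon$ so that $n^{\theta+\epsilon'-1}\ll n^{\theta(1+\epsilon)}$; this is trivially true since the exponent gap is at least $1-\epsilon'$. Hence the claim holds whp, even without the factor $n^{-1}$. The one delicate point, and the step I expect to be the main obstacle, is handling the dependence of the two halves. It is resolved by treating the concatenation as one path, which requires distinct vertices, so that the edge independence and the single $1/\ell!$ factor are legitimate. Dropping the constraint that the second part avoids the first only increases the count, so the upper bound persists.
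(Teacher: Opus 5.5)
Your proof is correct and follows essentially the same route as the paper: a first-moment count of increasing paths from $v$ through $S$, where the choice of position (at most $\ell$ options) and identity ($|S|$ options, in place of a factor $n$) of the $S$-vertex gives the factor $\ell|S|/n$, which is then summed via $\sum_\ell \ell x^\ell/\ell! = xe^x$ and finished with Markov's inequality. Your explicit split at step $h$, with the $C(\epsilon)(\theta+\epsilon)^h$ form of Gelfand's bound, is a more careful version of the paper's ``immediate adaptation'' of Lemma~\ref{lemma:nb_paths}; keeping the extra factor $n^{-1}$ is worthwhile, because the later application with $|S_1| = k\tau^*$ actually needs that stronger form.
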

\begin{proof}
Let $Y_\ell(v,S)$ be the number of increasing paths of length $\ell$ from $v$, passing through the set $S$ at least once. An immediate adaptation of the arguments in Lemma \ref{lemma:nb_paths} yields
$$ \bE[Y_\ell(v,S)] \leq \frac{|S|\ell}{n} \times \frac{(\log n)^\ell }{\ell!} \times (b_{\sigma(v)}^T \cdot \Lambda^\ell \cdot \boldsymbol{1}),$$
since there are $\ell$ positions on the path that may contain a vertex from $S$, for which there are $|S|$ possible vertices, instead of the previous at most $n$. Hence, since $|B_{(G,\pi)}(v,S)| \leq \sum_{\ell \geq 1} Y_\ell(v,S)$, by proceeding in the same manner as in the proof of the upper bounds in Theorem \ref{thm_reachability}, we deduce the desired result.
\end{proof}

\section{Reachability from a vertex}
\label{section_subcritical}

In order to prove the lower bounds in Theorem \ref{thm_reachability}, we couple the temporal stochastic block model with temporal random multigraphs whose edge labels are exponentially distributed; these models are formally defined in Section~\ref{TESBM}. There are many benefits of working with exponentially distributed labels, namely
\begin{itemize}
	\item memoryless property of the exponential distribution;
	\item easy computation of minimum of independent exponential random variables;
	\item abundance of concentration inequalities for sums of exponential random variables.
\end{itemize}  
We begin by presenting a recursive algorithm for building reachable sets in a generalized setting (one that covers both the TSBM and the auxiliary model mentioned above). This process mirrors the one used in previous works (e.g., \cite{CRRZ24,BKL24}), where the reachable set is built by recursively adding a new vertex via the edge of minimal label, exceeding the one added at the previous step. We refine this procedure by considering a broader notion of the candidate set at each step of the exploration. This allows us to work with collections of edges whose size and composition evolve in a more regular manner over time, which in turn facilitates the probabilistic analysis. We conclude Section \ref{section:construction_process} by introducing the notion of temporal multigraphs.

In Section \ref{TESBM}, we define a temporal random stochastic block model with exponentially distributed edge labels. We state our results on reachable sets in this auxiliary setting, although their proof is detailed in Section \ref{section:proof_key_thm}. Using the construction process from Section \ref{section:construction_process}, we establish a coupling of the original temporal stochastic block model and this auxiliary process. This allows us to deduce the desired lower bounds in the subcritical regime $\theta < 1$. We use layering arguments to obtain the corresponding lower bound on the size of the reachable set in the supercritical regime $\theta > 1$. Finally, we sketch the main ingredients of the proof of Theorem \ref{thm_longest_paths}.

\subsection{Construction of reachable sets}
\label{section:construction_process}

Let $H=(V,E)$ be a simple graph and $\psi : E \to \bR^+$ be an injection. Then, $(H,\psi)$ is a temporal graph. Fix a vertex $v \in V$. We define the following process which constructs a spanning subgraph of the induced temporal graph of $B_{(H,\psi)}(v)$ in $(H,\psi)$ by consecutively adding the edge with minimal label adjacent to the already constructed graph. Note that this is essentially the construction process of the \emph{first foremost tree} (see \cite{CRRZ24}), the only difference being that here, we allow some cycles to form. In our setting, at time $t\in\bN$, $I_t$ denotes the set of vertices added by time $t$, $e_t$ is the edge added at time $t$ and $Z_t = \psi(e_t)$ denotes its label. We initialize the process with $I_0 = \{v\}$ and $Z_0 = 0$. At step $t+1$, let 
$$A_t \coloneqq \{e \in E : e \cap I_{t} \neq \emptyset,\textrm{ }  \psi(e) > Z_{t} \}$$ 
be the set of \emph{admissible} edges. Note that an edge is admissible even if \emph{both} of its endpoints are in $I_t$. If $A_t = \emptyset$, the process comes to a halt. If not, define 
$$e_{t+1} \coloneqq \argmin_{e\in A_t} \psi(e),$$ 
and denote its label by $Z_{t+1} = \psi(e_{t+1})$. Update the vertex set $I_{t+1} = I_t \cup e_{t+1}$. The constructed temporal graph at step $t$ is, thus, $\cI_t \coloneqq ((I_t,\{e_s:s\leq t\}),\psi)$
.  Denote by $\tau_A \coloneqq \inf\{t\geq 0 : A_t = \emptyset \}$. The process is, thus, stopped at time $\tau_A$, and the set of reachable vertices from $v$, $B_{(H,\psi)}(v) = I_{\tau_A}$. We often drop the $\tau_A$ notation and write simply $\cI$ and $I$ instead of $\cI_{\tau_A}$ and $I_{\tau_A}$, respectively. \\ 

A difficulty in the analysis of this process is that the set $A_t$ depends on the underlying graph, and in the random setting (in particular for the TSBM), its size and composition are not directly accessible, as they depend on the degrees of the vertices. This makes it difficult to control the minimum $Z_t$. It is more convenient to work instead with minima taken over all unexplored edges, so we extend $\psi$ accordingly. Suppose that the map $\psi$ is extended to non-edges in such a way that for all $u,v \in V$, such that $uv\notin E$, $\psi(uv) > \max_{e\in E} \psi(e)$. For $x \in \bR^+$, denote by
$$\cE(H,\psi,x) = \{\{u,w\} \subset V: \psi(uw) > x\} $$
the set of edges that arrive after time $x$. Then we may define for $t\in \bN$ a set of \emph{unexposed} edges within the process
$$U_t =  \{\{u,w\} \in \cE(H,\psi,Z_t) :  u\neq w, \{u,w\} \cap I_{t} \neq \emptyset \},$$
that is the set of pairs of distinct vertices of at least one endpoint in $I_t$, whose associated label is larger than $Z_t$. Further, define $\tau_U \coloneqq \inf\{t\geq 0 : U_t \cap E = \emptyset \}$. It is easy to check that, since $A_t = U_t \cap E$, for all $t$,
$$ \min\{\psi(uw) : \{u,w\} \in U_t \} = \min\{\psi(e) : e\in A_t\} = Z_t, $$
and $\tau_U = \tau_A \eqqcolon \tau$. Hence, considering $U_t$ rather than $A_t$ allows for an extension of the definition of $Z_t$ beyond $\tau$. Furthermore, minima are taken over all unexplored edges, so no information on vertex degrees is explicitly required in the analysis of the process. We call the sequence $(U_t)_{t\geq 1}$ of sets of pairs of vertices the \emph{canonical sequence} associated to $(H,\psi)$ and $v$. Canonical sequences play a vital role in the coupling of the temporal stochastic block model with the auxiliary temporal multigraph in the following section.  \\

It is still challenging to understand the structure and cardinality of the elements of the canonical sequence in the context of temporal stochastic block models, where we need access to the number of different types of edges of $U_t$. To facilitate the analysis further, we introduce sequences of sets of unexplored edges that evolve in a more regular manner over time. Roughly speaking, these sets are chosen so that their size is a linear function of time, and their composition preserves the proportions of the different types of edges in the stochastic block model setting. In particular, given $(H,\psi)$ and a vertex $v$, we replace the associated canonical sequence in the above described procedure by a different \emph{smoother} sequence of sets of pairs of vertices. Formally, let ${\boldsymbol{F}} = (F_t)_{t\geq 1}$ be a sequence of subsets of unordered pairs of $V$, and denote by $\cI_t^{\boldsymbol{F}}, I^{\boldsymbol{F}}_t, \tau^{\boldsymbol{F}}, e^{\boldsymbol{F}}_t, Z^{\boldsymbol{F}}_t$ the constructed graph at step $t$, its vertex set, the total number of steps, the edge we attach at step $t$ and its label, respectively. By convention, $Z^{\boldsymbol{F}}_0 = 0$. We say that $\boldsymbol{F}$ is \emph{admissible} if, for all $t\leq \tau^{\boldsymbol{F}}$,
$$F_t \subseteq \cE(H,\psi,Z^{\boldsymbol{F}}_t) \cap (I^{\boldsymbol{F}}_t \times V) ,$$
that is $F_t$ is a set of unexplored edges of at least one endpoint in $I^{\boldsymbol{F}}_t$, and whose label is larger than $Z^{\boldsymbol{F}}_{t}$. This is equivalent to having all vertices of $\cI^{\boldsymbol{F}}$ be reachable from $v$, that is $I^{\boldsymbol{F}} \subseteq B_{(H,\psi)}(v)$. Moreover, $\boldsymbol{F}$ is said to be \emph{regular} of \emph{rate} $m\in \bN$ if it is admissible and, for all $t \leq \tau^{\boldsymbol{F}}$, $|F_t| / t = m$. By working with regular sequences, we always know exactly how many edges are considered at each step of the exploration (here $tm$ at time $t$), and ensure a smooth dependence on time. Finally, for a given partition $\boldsymbol{V} = (V_1,\ldots, V_k)$ of the vertices of $H$, we say that $\boldsymbol{F}$ is \emph{block-regular} with respect to $\boldsymbol{V}$ of \emph{rate} $m\in \bN$ if it is regular of rate $m$, and additionally satisfies for all $i \in [k]$, $t \leq \tau^{\boldsymbol{F}}$, 
$$|F_t \cap (I^{\boldsymbol{F}}_t \times V_i) |/|I^{\boldsymbol{F}}_t| \in \{\lfloor m|V_i|/|V| \rfloor, \lceil m|V_i| /|V| \rceil\}.$$
Block-regularity ensures that the proportions of considered edges of each type remain unchanged during the process, that is the number of edges of $F_t$ that have an endpoint in $V_i$, is proportional to $|V_i|$. Controlling the distribution of candidate edges across blocks is essential in the temporal stochastic block model setting. Our goal is to approximate the canonical sequence of temporal stochastic block models by admissible block-regular sequences that construct a subgraph of $\cI$, spanning all but a negligible proportion of its set of vertices. Finding such sequences becomes considerably easier in a multigraph setting, where the set of candidates is much larger and the dependence on the past insignificant.  

\paragraph{Temporal multigraphs.}Let $H$ be a multigraph with no loops. The definition of $\psi$ is slightly more involved when working with multigraphs, since the number of edges indexed by $u,w \in V, u\neq w$, is \` a priori unbounded. In this case, the set of edges of $H$ is a multiset, that we embed into the set $V^2\times{\bN}$, which contains countably many copies of each unordered pair of distinct vertices $((uv)_{i})_{i\in \bN}$: 
$$E(H)\subseteq V^2\times \bN \coloneqq \{(uw)_i : u,w\in V, i\in \bN\},$$ 
and we define 
$$\psi : V^2\times\bN \to \bR^+,$$ 
in such a way that for all $u,w \in V, u\neq w$, the sequence $(\psi((uw)_i))_{i\in \bN}$ is increasing and unbounded. Since for a given pair of vertices, the labels of the possible multiedges between them is ordered, we may only consider one at a time. For any $x\in \bR^+$, denote by
$$\cE(H,\psi, t) = \{(uw)_i: u,w \in V, i = \min\{j\in \bN :  \psi((uw)_j) > t \} \} \subseteq V^2$$ 
the set containing exactly one edge per vertex pair, which is the first one to arrive after time $x$. Unlike the simple graph case, where the number of edges arriving after a given time $t$ heavily depends on $t$ and the structure of the underlying graph, here the set $\cE(H,\psi,t)$ has size $n(n-1)/2$ for all $t\in \mathbb{N}$ and any choice of $(H,\psi)$ on $n$ vertices. Define the \emph{canonical sequence} of sets of unexposed edges, which we use to construct the reachable set from a vertex $v$, by
$$ U_t = \{(uw)_i \in \cE(H,\psi,Z_t) : \{u,w\}\cap I_t \neq \emptyset\},$$
that is the set containing for each vertex pair of at least one element of $I_t$, the edge of minimal label exceeding $Z_t$. Hence, $U_t \simeq I_t \times V \subseteq V^2$, so the notions of admissible, regular and block-regular sequences of sets of unexposed edges translate in the setting of multigraphs just the same. \\

\subsection{Temporal graphs with exponentially distributed edge labels}
\label{TESBM}
We now introduce a temporal random multigraph $(G^*,\pi^*)$, defined as follows. The vertex set $V(G^*) = V(G) = [n]$, in the same way as in $G$, is partitioned into $k$ blocks. To each pair of distinct vertices $u,v\in [n]$, such that $u \in V_i, v\in V_j$, associate an increasing sequence of random labels $\pi^*((uv)_1) < \pi^*((uv)_2) < \cdots$, such that $\pi^*((uv)_\ell) \sim \textrm{Exp}(p_{ij})$ for all $\ell \in \bN$. By convention, consider that $\textrm{Exp}(0)$ is always $\infty$. Equivalently, the labels $\pi((uv)_i)$ may be interpreted as the arrival times of a Poisson process $\Pi(uv)$ of rate $p_{ij}$. The edge multiset is defined by
$$E(G^*) \coloneqq \{(uv)_i : u,v \in [n], u\neq v, i\in\bN, \pi^*((uv)_i)<1 \}. $$
Hence, $uv$ is an edge of multiplicity $\max\{i\in \bN:\pi^*((uv)_i)<1\}$. Furthermore, the probability that $uv \in V_i \times V_j$ is an edge at least once is given by 
$$\bP[\textrm{Exp}(p_{ij})<1] = 1-e^{-p_{ij}}.$$ 
Hence, $G^*$ is distributed as a multigraph version of $\bS\bB\bM$ of parameters $n,k,\boldsymbol{a}, Q\in [0,1]^{k\times k}$, defined by $Q_{ij} = 1-e^{-p_{ij}}$, since the edge probabilities are independent and constant per block pair. Observe that, $\pi^*$ is defined in such a way that all non-edges also have labels. \\

Fix a vertex $v\in [n]$. We construct $B_{(G,\pi)}(v)$ and $B_{(G^*,\pi^*)}(v)$ via the construction process described in Section \ref{section:construction_process}, using unstarred notation for quantites associated with the canonical sequence of unexposed edges of $(G,\pi)$ and $v$ (namely, $\cI_t$, $Z_t$, $e_t$, etc.), and starred notation ($\cI^*_t$, $Z^*_t$, $e^*_t$, etc.) for the corresponding quantities related to $(G^*,\pi^*)$ and $v$. By using these processes, we couple $(G,\pi)$ and $(G^*,\pi^*)$ in such a way that $B_{(G,\pi)}(v) = B_{(G^*,\pi^*)}(v)$ with high probability whenever $\theta < 1$, and thus, work only with $(G^*,\pi^*)$ in the subcritical regime. 

The following lemma gives an upper bound on the number of extra steps by step $t$, denoted by $E^*_t$, in constructing $B_{(G^*,\pi^*)}(v)$, in which we add an edge, but do not add a vertex. By controlling this quantity, we are able to relate the size of the infection with the stopping time of the process, since for all $t\leq \tau^*$, $|I^*_t| = t - E^*_t$. 

\begin{lemma}
\label{no_fail}
Suppose $\theta < 1$ and for $\epsilon > 0$, such that $(1+\epsilon)\theta < 1,$ let $r^+ = n^{\theta(1+\epsilon)}$. Then, whp, for all $t\leq \tau^* \land r^+$, $E^*_{t} = O_p(t^2/n) = o_p(t)$. In particular, when $\theta < 1/2$, choosing $\epsilon$, such that $(1+\epsilon)\theta < 1/2$, we obtain for $t\leq \tau^* \land r^+$, $E_t^* = 0$ whp.
\end{lemma}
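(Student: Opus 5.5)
We bound the probability that the edge added at a given step $s$ of the construction of $B_{(G^*,\pi^*)}(v)$ has both endpoints already in $I^*_{s-1}$, and then sum over $s\le t$. Throughout, we condition on the history $\cF_{s-1}$ of the process up to step $s-1$. This history consists of $I^*_{s-1}$, the labels $Z^*_1,\dots,Z^*_{s-1}$, and the edges already added.

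\emph{Memorylessness.} Each pair $uw$ with at least one endpoint in $I^*_{s-1}$ carries a Poisson process $\Pi(uw)$ of rate $p_{\sigma(u)\sigma(w)}$. The only information revealed so far about $\Pi(uw)$ concerns its points up to time $Z^*_{s-1}$ (the explored pairs) or the absence of points in certain intervals. By the strong Markov property of Poisson processes, the residual waiting times $\pi^*(\cdot)-Z^*_{s-1}$ over the canonical set $U^*_{s-1}$ are independent exponentials with rates $p_{\sigma(u)\sigma(w)}$, conditionally on $\cF_{s-1}$. This holds because in the multigraph every pair always has a next arrival. Hence $e^*_s$ is the pair achieving the minimum, and it equals a given pair $uw\in U^*_{s-1}$ with probability proportional to $p_{\sigma(u)\sigma(w)}$.

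\emph{Per-step bound.} With $I=I^*_{s-1}$ of size at most $s$, the pairs internal to $I$ have total rate at most $\binom{s}{2}p_{\max}$. The total rate over all pairs in $U^*_{s-1}$ is at least the rate from $v$ alone. More usefully, it is $\sum_{u\in I}\sum_{w\notin I}p_{\sigma(u)\sigma(w)}\ge |I|\,(n-s)\min_i\sum_j a_j\lambda_{ij}\log n/n$. Each row sum of $\Lambda^T$ is positive by irreducibility, so this is $\ge c|I|\log n$ for $s\le r^+=o(n)$. Using $|I|\ge s/2$, which holds inductively as long as $E^*_{s}\le s/2$, we get
$$\bP[e^*_s\subseteq I^*_{s-1}\mid\cF_{s-1}]\le \frac{s^2 p_{\max}}{c\,s\log n}=O(s/n).$$

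\emph{Summing.} Let $\rho=\tau^*\wedge r^+\wedge\inf\{s:E^*_s>s/2\}$. Then $\bE[E^*_{t\wedge\rho}]\le\sum_{s\le t}O(s/n)=O(t^2/n)$. Markov's inequality gives $E^*_t=O_p(t^2/n)$ for each fixed $t$. To obtain the bound uniformly for all $t\le\tau^*\wedge r^+$, apply Markov on dyadic blocks $t\in[2^j,2^{j+1})$ with a slowly growing factor $\omega$, and use monotonicity of $E^*_t$. The resulting failure probabilities are summable, up to replacing $O_p$ by an $\omega$ factor, which is what the statement means. Since $t\le r^+=n^{(1+\epsilon)\theta}=o(n)$, we have $t^2/n=o(t)$, so the stopping rule $E^*>s/2$ is never triggered whp, and $\rho$ can be removed.

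\emph{The case $\theta<1/2$.} Here $\bE[E^*_{\tau^*\wedge r^+}]\le O((r^+)^2/n)=O(n^{2(1+\epsilon)\theta-1})=o(1)$, so $E^*=0$ whp.

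The main obstacle is making the conditional-independence step rigorous. The history mixes explored and unexplored pairs, and the time $Z^*_{s-1}$ is random, so we must check that the residual labels are fresh exponentials. The multigraph construction, in which every pair always has a next arrival, is what makes this work, and it is where the argument must be stated carefully. The lower bound on the total rate needs only that $\Lambda$ has no zero column, which follows from irreducibility.
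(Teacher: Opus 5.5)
Your proposal is correct and follows essentially the same route as the paper: you compare the exponential clocks of the pairs internal to $I^*_{s-1}$ with those of the outgoing pairs to get an $O(s/n)$ conditional probability of an extra step, sum to $O(t^2/n)$, and conclude with Markov's inequality. The one difference is that the paper bounds the internal rate by $|I^*_{s-1}|^2 p_{\max}$ instead of $\binom{s}{2}p_{\max}$, so $|I^*_{s-1}|$ cancels against the outgoing rate $\Omega(|I^*_{s-1}|\log n)$ and your stopping time $\rho$ (with the requirement $|I^*_{s-1}|\ge s/2$) is unnecessary; your dyadic treatment of uniformity over $t\le\tau^*\wedge r^+$ is more careful than the paper, which just applies Markov's inequality at fixed $t$.
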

\begin{proof}
At the beginning of the process, there are no edges, so $E^*_0 = 0$. Fix $t \geq 1$. We add an extra edge at step $t$ if $e^*_{t} \subseteq I^*_{t-1}$. Define
\begin{equation*}
    \begin{split}
        Z^{\mathrm{in}}_{t} &\coloneqq \min\{\pi^*(e)-Z^*_{t-1}: e\cap I^*_{t-1} =e \},\\
        Z^{\mathrm{out}}_{t} &\coloneqq \min\{\pi^*(e)-Z^*_{t-1}: |e\cap I^*_{t-1}|=1 \}.
    \end{split}
\end{equation*}
Then, $Z^{\mathrm{in}}_{t}$ is the minimum of at most $|I^*_{t-1}|^2$ independent exponential random variables of parameter at most $p_{\max}$, and is, thus, stochastically dominated by $\mathrm{Exp}(|I^*_{t-1}|^2 p_{\max})$. Meanwhile, since $P$ is assumed to be irreducible, for all $i\in[k]$, there exists $j \in [k]$ such that $p_{ij}\neq 0$. Hence, if we denote by $a_{\min}=\min_{i\in[k]}a_i $ and $p_{\min}=\min\{p_{ij}\neq 0, i,j\in[k]\}$, for each $u \in I^*_{t-1}$, there are at least $a_{\min}n$ vertices $w$, such that $uw \in E(G^*)$ with probability at least $p_{\min}$. Thus, $Z^{\mathrm{out}}_{t}$ is the minimum of at least $a_{\min}|I^*_{t-1}|n$ independent exponential random variables of parameter at least $p_{\min}$. Therefore, it stochastically dominates $\mathrm{Exp}(a_{\min}|I^*_{t-1}|np_{\min})$. Since for any $m\geq 1$, conditionally on $|I^*_{t-1}|=m$, $Z^{\mathrm{in}}_t+Z^*_{t-1}$ and $Z^{\mathrm{out}}_t+Z^*_{t-1}$ are independent,
$$\bP[Z_{t}^{\mathrm{in}}\leq Z^{\mathrm{out}}_{t}| |I^*_{t-1}|=m] \leq \bP[\mathrm{Exp}(m^2p_{\max})\leq \mathrm{Exp}(a_{\min}mn p_{\min}) ] = O(m/n),$$
so we deduce that
\begin{equation*}
\bP[e^*_{t}\subseteq I^*_{t-1}]=\sum_{m=1}^{t-1} \bP[|I^*_{t-1}|=m] \bP[Z_{t+1}^{\mathrm{in}}\leq Z^{\mathrm{out}}_{t+1}||I^*_{t-1}|=m] = O(t/n).
\end{equation*}
 Hence, we can bound the expectation
$$ \bE[E^*_t] = \sum_{s=1}^t \bP[e^*_s \subseteq I^*_{s-1}] = O\left( \sum_{s=1}^t  \frac{s}{n}\right) = O(t^2/n),$$
and deduce the desired results by Markov's inequality.
\end{proof}

The following theorem is the main ingredient of the proof of our main results. The proof can be found in Section \ref{section:proof_key_thm}.

\begin{theorem}
\label{key:theorem}
Let $\boldsymbol{F}=(F_t)_{t\geq 1}$ be a block-regular sequence with respect to $(V_1,\ldots,V_k)$, of rate $n_* = (1-o(1))n$, associated to $(G^*,\pi^*)$ and $v$. If $\cI^{\boldsymbol{F}}$ is a tree, then, whp
\begin{itemize}
	\item $\tau^{\boldsymbol{F}} \geq n^{\theta(1-\epsilon)}$, for any $\epsilon > 0$,
	\item for all $i\in [k]$, $t = t(n) \leq  \tau^{\boldsymbol{F}},$ $|I^{\boldsymbol{F}}_t\cap V_i|\sim_p t(\theta x_\theta)_i$,
	\item $\cI^{\boldsymbol{F}}$ is distributed as a weighted random recursive tree of size $\tau^{\boldsymbol{F}}$, and random weights $(W_m)_{m\geq 1}$, defined in the same way as in Proposition \ref{proposition_subcritical_structure}.
\end{itemize}
\end{theorem}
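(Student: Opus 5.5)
The plan is to exploit the memoryless property of the exponential labels in $(G^*,\pi^*)$. I will show that, along a block-regular sequence $\boldsymbol{F}$ of rate $n_*$, the next edge is selected by a race between independent exponential clocks, and that the times between consecutive additions behave like a sum of independent exponentials with explicitly computable rates. At step $t+1$, conditionally on the past, the label overshoots $\pi^*(e)-Z^{\boldsymbol{F}}_t$ for $e\in F_{t+1}$ are independent. For a pair of type $(i,j)$, such an overshoot is $\mathrm{Exp}(p_{ij})$, because each pair carries a Poisson process and only the first arrival after $Z^{\boldsymbol{F}}_t$ is retained.

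Since $F_{t+1}$ contains about $n_*a_j$ edges from each vertex $u\in I_t^{\boldsymbol{F}}$ to block $V_j$, the total rate attached to $u\in V_i$ is
$$\sum_j n_*a_j p_{ij}=(1+o(1))\log n\,\sum_j a_j\lambda_{ij}=(1+o(1))\,\omega_i\log n.$$
Here I would check that $\omega_i=b_i\Lambda\boldsymbol{1}$ matches $\sum_j\Lambda_{ji}$ under the right transposition convention; the paper's $\omega$ and $\Lambda$ conventions must be aligned. The consequences are as follows.
\begin{enumerate}[label=(\alph*)]
\item The edge is attached to $u$ with probability proportional to $\omega_{\sigma(u)}$, independently of everything else. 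This gives the weighted random recursive tree attachment rule.
\item Given the parent $u$ of type $i$, the type of the new vertex is $j$ with probability proportional to $a_j\lambda_{ij}$.
\item The increment $Z_{t+1}-Z_t$ is exponential with rate $(1+o(1))\log n\sum_{u\in I_t}\omega_{\sigma(u)}$.
\end{enumerate}
Combining (a) and (b), the probability that the next vertex has type $j$ is proportional to $\sum_i X_i(t)\,a_j\lambda_{ij}$, which is the $j$-th coordinate of $\Lambda X(t)$ in the paper's notation. This is precisely the law of $\sigma(m)$. Because $\cI^{\boldsymbol{F}}$ is assumed to be a tree, every step adds a new vertex and $|I_t|=t$. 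Together with the attachment rule, this proves the third bullet, the types being generated independently of the choice of parent. There is a subtlety: the weights are $\omega_i(1+o(1))$ rather than exactly $\omega_i$. I would handle this by using the exact rates $n_*$-dependent block counts, which are deterministic by block-regularity, and absorbing the error through a coupling that has total variation distance $o(1)$ over $n^{O(1)}$ steps. If the error terms are only $1+o(1)$, this bound needs care; if it fails, I would state the distributional identity with the exact weights $\sum_j |F\cap(u\times V_j)|\,p_{ij}$.

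For the second bullet, I will treat $X(t)$ as a generalized Pólya urn with replacement matrix $\Lambda$. Each draw adds one ball, of type $j$ with probability $(\Lambda X)_j/\boldsymbol{1}^T\Lambda X$. By irreducibility and Perron--Frobenius, the standard stochastic approximation result (Athreya--Karlin or Janson's urn theorems) gives $X(t)/t\to y$ almost surely, where $y$ is the fixed point of $y=\Lambda y/(\boldsymbol{1}^T\Lambda y)$. Hence $y\propto x_\theta$ with $\|y\|_1=1$, and the normalisation $\|\theta x_\theta\|_1=1$ gives $y=\theta x_\theta$. The statement for every $t=t(n)\to\infty$ follows from almost sure convergence of the urn, which is independent of $n$ up to the $1+o(1)$ rate corrections. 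Small $t$ should be excluded or interpreted asymptotically.

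For the first bullet, I will bound $Z_t$. By (c), $Z_t=\sum_{s<t}\mathcal{E}_s/(\log n\,\boldsymbol{1}^T\Lambda X(s))$ with independent standard exponentials $\mathcal{E}_s$. Since $\boldsymbol{1}^T\Lambda X(s)\sim s\,\boldsymbol{1}^T\Lambda\theta x_\theta=\theta s$, we get $Z_t\approx \log t/(\theta\log n)$, and concentration follows because the sum has variance $O(1/\log^2 n)$. The process can only stop once $Z_t$ is near $1$, since the labels of $G^*$-edges are below $1$ and beyond that point there are no admissible edges. Therefore $\tau\ge t$ whenever $\log t/(\theta\log n)<1-\epsilon'$, that is, for $t=n^{\theta(1-\epsilon)}$, and this holds whp. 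The main obstacle is to make the stopping mechanism rigorous: I have to show that the process does not stop before $Z_t$ reaches $1$, which would happen if the sequence $\boldsymbol{F}$ were exhausted. The admissibility and regularity of $\boldsymbol{F}$ (all $t\le\tau$) guarantee that a minimal label exists, and stopping occurs exactly when it exceeds $1$. I would rely on this together with the uniform-in-$t$ concentration of $Z_t$, obtained by a martingale maximal inequality.
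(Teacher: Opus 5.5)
For the first two bullets you follow the paper's route. The exponential race gives the increment law \eqref{eq:Z} and the urn dynamics of Lemma~\ref{PU}. Janson's urn theorem identifies the limit $\theta x_\theta$. Finally, $Z^{\boldsymbol F}_{r^-}<1$ follows from concentration of a sum of exponentials that are independent given the type process; you use a variance bound where the paper uses Lemma~\ref{janson1}, and either suffices. Your worry about conventions is justified: the weight of a type-$i$ vertex is $\sum_j a_j\lambda_{ij}=\boldsymbol 1^T\Lambda b_i$ under the introduction's convention $\Lambda_{ij}=a_i\lambda_{ij}$. Your worry about rounding is harmless. The block counts are exact up to $O(1)$, so each step's law moves by $O(1/n)$ in total variation. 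Over the $n^{\theta(1+\epsilon)}=o(n)$ steps that matter when $\theta<1$, this costs $o(1)$.

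The gap is in the third bullet, at the phrase ``the types being generated independently of the choice of parent.'' This contradicts your own step (b). Conditionally on the past, the pair (parent $u$, type $j$ of the new vertex) has law proportional to $a_j\lambda_{\sigma(u)j}$. By contrast, $T_M(\boldsymbol W)$, whose weights are generated in advance and independently of the attachments, requires the product law proportional to $\omega_{\sigma(u)}(\Lambda X)_j$. Matching the two marginals does not determine the law of the tree. For instance, take $k=2$, $\boldsymbol a=(1/2,1/2)$, $\lambda_{11}=0$, $\lambda_{12}=\lambda_{21}=2$, $\lambda_{22}=4$, so that $\omega_1=1$ and $\omega_2=3$, with a root of type $1$. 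Consider the event that vertex $3$ attaches to the root and vertex $4$ attaches to vertex $3$. In the process its probability is $\frac14\cdot\frac37=\frac{15}{140}$, since a child of a type-$1$ vertex must have type $2$. In $T_M(\boldsymbol W)$ it is $\frac14\bigl(\frac14\cdot\frac15+\frac34\cdot\frac37\bigr)=\frac{13}{140}$. What (a) and (b) actually show is that $\cI^{\boldsymbol F}$ is the genealogical tree of the two-stage urn of Lemma~\ref{PU}: the parent is drawn with probability proportional to $\omega_{\sigma(u)}$, and the offspring type is drawn from the kernel $a_j\lambda_{\sigma(u)j}/\omega_{\sigma(u)}$, which depends on the parent's type. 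This tree agrees with $T_M(\boldsymbol W)$ when that kernel does not depend on the parent's type, or, as an untyped tree, when all $\omega_i$ are equal, but not in general.

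The paper's corollary to Lemma~\ref{PU} makes the same jump without justification, so this is not a departure from its proof. The step still fails, and the third bullet has to be restated for this multitype recursive tree. The first two bullets only use the type counts and are unaffected. Any height or profile consequences would then need a multitype argument rather than a direct appeal to results on weighted recursive trees with exogenous weights.
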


\begin{remark}\label{remark:preliminary_lower_bounds} Such a sequence $\boldsymbol{F}$ exists for all $\theta < 1$, see \eqref{embed_sequence} in Section \ref{proof_lower_bounds} below for an explicit construction. Since $\boldsymbol{F}$ is block-regular and so, in particular, admissible, it follows that $\tau^* \geq \tau^{\boldsymbol{F}}$ whp, and, moreover for all $i \in [k],$ $t\leq \tau^{\boldsymbol{F}}$, we have 
$$|I^*_t \cap V_i| \geq |I^{\boldsymbol{F}}_t \cap V_i| = \Omega_p(t).$$
\end{remark}

\subsection{Coupling the TSBM to temporal random multigraphs}
\label{subsection_multi}

Let $G \sim \bS\bB\bM(n,k,\boldsymbol{a},P)$ and $(\psi(e))_{e\in E(G)}$ be a random permutation of the edges. Since we are only interested in the ordering of the labels of the edges, it is equivalent to assign to each edge $e \in E(G)$ an independent label $\pi(e) \sim \mathrm{Unif}[0,1]$ and consider $(G,\pi)$ (see \cite{CRRZ24}). Moreover, without loss of generality, we may couple $G$ and $\pi$, by extending the definition of $\pi$ to all unordered pairs $uv$ as independent random variables $\pi(uv) \sim \textrm{Unif}[0,1/p_{\sigma(u)\sigma(v)}]$, and declaring $uv \in E(G)$ if and only if $\pi(uv) < 1$, where recall that for $w\in [n]$, $\sigma(w)$ indicates which block $w$ belongs to. By convention, consider that $\textrm{Unif}[0,1/0]$ is always $\infty$. This coupling is introduced for technical convenience and simplifies our arguments. \\

Now, we are ready to couple the temporal stochastic block model $(G,\pi)$ with $(G^*,\pi^*)$. 

\begin{lemma}
\label{lemma:coupling_G}
If $\theta<1$, then there exists a coupling of $(G^*,\pi^*)$ and $(G,\pi)$, such that $\cI = \cI^*$ with high probability.
\end{lemma}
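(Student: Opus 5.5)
We couple the two processes step by step, exposing labels only as the exploration needs them, so that each new edge is chosen by the same rule in both models and no error is committed as long as no pair is examined "twice". On a given pair $uw$ with $u\in V_i$, $w\in V_j$, we realize the first point of the Poisson process $\Pi(uw)$ of rate $p_{ij}$ and the uniform label $\pi(uw)\sim\mathrm{Unif}[0,1/p_{ij}]$ jointly. On the event that at most one point of $\Pi(uw)$ falls in $[0,1]$, we identify $\pi(uw)$ with that point. Precisely, conditionally on $\Pi(uw)$ having exactly one point in $[0,1]$, that point is uniform on $[0,1]$, which matches the law of $\pi(uw)$ given $\pi(uw)<1$. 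The probabilities of these events are $p_{ij}e^{-p_{ij}}$ and $p_{ij}$, so they differ by $O(p_{ij}^2)=O((\log n/n)^2)$. Hence we can couple the edge indicator together with its label on each pair so that they disagree with probability $O(p_{\max}^2)$. We do this independently over pairs.

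Next we run both explorations of Section \ref{section:construction_process} from $v$. On the event that all pairs touching the explored set agree (same edge presence, same first label, at most one multi-edge before time $1$), the canonical sequences generate the same choices. Then $e_t=e^*_t$ and $Z_t=Z^*_t$ for all $t$ by induction, since $A_t=U_t\cap E$ and the minimum over $U_t$ is attained at a genuine edge in both models. When no admissible edge has label $<1$, both processes stop at the same time. Thus $\cI=\cI^*$. The remaining danger is that, in $G^*$, a pair with label above $Z^*_t$ but below $1$ could be a second copy of a pair whose first copy was already used or passed over. This is exactly the multiple-edge event above, so it is covered.

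It therefore suffices to bound the probability that some pair incident to the explored set is \emph{bad}, that is, a pair where the coupling fails. The number of pairs incident to $I^*$ is at most $|I^*|\,n$. By Lemma \ref{upper:S}, or more directly the upper bound of Theorem \ref{thm_reachability} applied to $(G^*,\pi^*)$, whp $|I^*|\le n^{\theta(1+\epsilon)}$. The same first moment argument works for $G^*$, since its path expectations are bounded identically up to a $(1+o(1))^\ell$ factor. The main obstacle is that $I^*$ depends on the coupled variables, so a naive union bound over a random set is not immediate. I would handle this by revealing pairs only when they become incident to the current explored set, following the exploration order. Each newly revealed pair is bad with probability $O((\log n/n)^2)$ independently of the past. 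The expected number of bad revealed pairs before $|I^*|$ exceeds $r^+=n^{\theta(1+\epsilon)}$ is then $O(r^+ n(\log n)^2/n^2)=O(n^{\theta(1+\epsilon)-1}(\log n)^2)=o(1)$, because $\theta(1+\epsilon)<1$. By Markov's inequality and the whp bound $|I^*|\le r^+$, there is no bad pair whp, and hence $\cI=\cI^*$ whp.
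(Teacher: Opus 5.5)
Your proof is correct, and it takes a genuinely different route from the paper's.

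\textbf{The paper's route.} The paper couples the two explorations dynamically, step by step:
\begin{enumerate}
\item For each block pair $(i,j)$, it couples the minimum of the unexposed uniform labels above $Z_{t-1}$ with the corresponding minimum of exponential labels, using the total-variation bound of Claim \ref{claim:calcul_Y}.
\item It then couples the argmin edges by comparing $|U_t(i_0,j_0)|$ with $|U^*_t(i_0,j_0)|$. This uses $\mathrm{deg}_{\max}(G)=O_p(\log n)$ and the lower bound $|U^*_t(i_0,j_0)|=\Omega_p(tn)$ taken from Remark \ref{remark:preliminary_lower_bounds}.
\item Finally, it sums an $O(\log n/n)$ error over $n^{(1+\epsilon)\theta}$ steps.
\end{enumerate}

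\textbf{Your route.} You couple statically and independently on each pair: the restriction to $[0,1)$ of the Poisson process is matched with that of the uniform label, with failure probability $O(p_{\max}^2)$ per pair. You observe that agreement on every pair incident to the explored set forces $\cI=\cI^*$ deterministically. A revealment argument along the exploration then ensures that only $O(r^+n)$ pairs ever matter.

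\textbf{What each buys.} Your argument is simpler and gives an explicit error $O(n^{\theta(1+\epsilon)-1}(\log n)^2)$. It does not need Claim \ref{claim:calcul_Y}, the degree bound, or the conditional-independence assertions about block minima given the past. In particular, it avoids the input $|U^*_t(i_0,j_0)|=\Omega_p(tn)$, which the paper draws from Remark \ref{remark:preliminary_lower_bounds} and hence from Theorem \ref{key:theorem}; you need only the first-moment upper bound. It also yields more than $\cI=\cI^*$: the two models agree on every pair incident to the reachable set. The paper's version, in exchange, stays within the exploration/canonical-sequence formalism it reuses afterwards.

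\textbf{Two minor points.}
\begin{itemize}
\item Lemma \ref{upper:S} is not the right citation for $|I^*|\le r^+$, since with $S=V\setminus\{v\}$ it is far too weak.
\item The first-moment bound for $G^*$ incurs no $(1+o(1))^\ell$ loss. The pairs along a path carry independent Poisson processes, so the expected number of increasing chains along a fixed vertex sequence is exactly $\prod_i p_i/\ell!$. Alternatively, you can run the revealment along the $(G,\pi)$ exploration, capped at $r^+$, and use the upper bound in Theorem \ref{thm_reachability}\ref{subcritical_reachability} for $(G,\pi)$ directly.
\end{itemize}
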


\begin{remark}\label{result:multiTSBM} Note that, the event $\cI = \cI^*$ implies that $\tau = \tau^*$, and that $\cI^*$ is a simple graph. 
\end{remark}

Before proving Lemma \ref{lemma:coupling_G}, we state the following technical result, whose proof can be found in Appendix \ref{proof:claim}.

\begin{claim}
\label{claim:calcul_Y}
Fix $z \in [0,1),$ $a\geq 2$ integer, $p \in (0,1]$. Let $(X_i)_{i\in [a]}$ be a family of i.i.d. uniform random variables on the interval $[0,1/p]$, and let $(Y_i)_{i \in [a]}$ be i.i.d. exponential random variables of parameter $p$. Define
\begin{equation*}
X = \min_{i\in [a]}X_i \indicatrice_{X_i > z}  , \quad \quad Y = \min_{i\in [a]}Y_i \indicatrice_{Y_i > z}.
\end{equation*}
Then, there exists a coupling $\bP_{(X,Y)}$, such that
$$\bP_{(X,Y)}[X\neq Y] \leq \frac{1}{a} + e^{-a(1-p)} + p. $$	
\end{claim}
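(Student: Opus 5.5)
The plan is to use the maximal coupling. For any two random variables on the same space there is a coupling with $\bP_{(X,Y)}[X\neq Y]=\tv(X,Y)$, so it suffices to bound the total variation distance between the laws of $X$ and $Y$ by $\frac1a+e^{-a(1-p)}+p$. Here I read $X$ (resp.\ $Y$) as the minimum over the indices with $X_i>z$ (resp.\ $Y_i>z$), equal to $\infty$ if there is none. Both laws live on $(z,\infty]$, and I will compare them through their survival functions. Writing $s\ge 0$ for the overshoot above $z$, independence gives
$$\bP[X>z+s]=\bigl(1-p\min(s,1/p-z)\bigr)^a,\qquad \bP[Y>z+s]=\bigl(1-e^{-pz}(1-e^{-ps})\bigr)^a .$$
Indeed, $X>z+s$ means that no $X_i$ falls in the window $(z,z+s]$, which has probability $ps$ for each $i$. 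Similarly $Y>z+s$ means that no $Y_i$ falls in $(z,z+s]$, which has probability $e^{-pz}(1-e^{-ps})$ for each $i$.

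The key steps, in order, are as follows.

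\emph{First}, compare the one-point window probabilities $q_X(s)=ps$ and $q_Y(s)=e^{-pz}(1-e^{-ps})$. Using $z<1$ and $1-e^{-x}\ge x-x^2/2$, one gets
$$q_X(s)\bigl(1-p-ps\bigr)\le q_Y(s)\le q_X(s),$$
so the two one-point probabilities agree up to a relative error of order $p+ps$.

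\emph{Second}, split the range of $s$ at a cutoff $s_0$ of order $1/p$, chosen so that $as_0p(1-p)$ is at least of order $a(1-p)$. Beyond the cutoff both survival functions are at most $(1-c(1-p))^a\le e^{-a(1-p)}$ up to constants. This tail mass, together with the atoms at $\infty$, accounts for the $e^{-a(1-p)}$ term; I would tune $s_0$ to make the constant exactly $1$.

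\emph{Third}, on $[0,s_0]$ bound the total variation by $\int_0^{s_0}|f_X-f_Y|\,ds$. The densities are $f_X=a\,q_X'(1-q_X)^{a-1}$ and $f_Y=a\,q_Y'(1-q_Y)^{a-1}$. From the comparison in the first step, $q_Y'=q_X'(1+O(p+ps))$ and
$$(1-q_Y)^{a-1}=(1-q_X)^{a-1}\exp\bigl(O(a\,q_X(p+ps))\bigr).$$
The mass of $X$ is concentrated on $s\lesssim 1/(ap)$, where $aq_X=aps=O(1)$. There the relative error is $O(p)+O(ps)=O(p)+O(1/a)$, which after integrating against $f_X$ gives the $p$ and $\frac1a$ terms.

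I expect the main obstacle to be the third step: turning this heuristic into a bound with the clean constants $\frac1a+p$, uniformly in $z\in[0,1)$. The exponential factor $\exp(O(aq_X(p+ps)))$ is not uniformly close to $1$ when $aps$ is large. To handle it, I would first try a cruder monotone-coupling argument. Couple each pair $(X_i,Y_i)$ via a common uniform through the survival functions on $(z,\infty)$, and note that $X\ne Y$ forces one of two events. The first is that the minimizing index in one family differs from that in the other, which has probability $O(1/a)$ since it requires two points within the small discrepancy window. The second is that the per-coordinate coupling fails at the minimizer, which has probability $O(p)$ since $q_X$ and $q_Y$ differ by a relative $O(p)$ on the relevant scale. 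If that fails to give the exact constants, I would fall back on the explicit survival-function comparison and bound $\sup_s|\bP[X>z+s]-\bP[Y>z+s]|$ together with the monotonicity of the density ratio. The point is that a monotone likelihood ratio makes the total variation distance equal to the Kolmogorov distance, which can be read off directly from the two displayed formulas.
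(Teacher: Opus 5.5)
Your reduction to bounding $\tv(X,Y)$ via a maximal coupling is exactly how the paper starts. After that there are two problems.

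\textbf{The random variables.} You analyse a different pair from the paper. The paper reads $X_i\indicatrice_{X_i>z}$ as $X_i$ \emph{conditioned} on $X_i>z$. So the uniform-type minimum is $z$ plus the minimum of $a$ i.i.d.\ uniforms on $[0,1/p-z]$, and by memorylessness the exponential-type minimum is $z+\mathrm{Exp}(ap)$, with no atoms at $\infty$. This is also what Lemma~\ref{lemma:coupling_G} needs, since there $a=|U_t(i,j)|$ counts candidates whose labels are already known to exceed $Z_{t-1}$. Your survival functions instead describe a binomial number of surviving indices. For that pair, Step~2 cannot be done with constant $1$. At the end $s=1/p-z$ of the uniform's support, the exponential side still has survival probability $(1-e^{-pz}+e^{-1})^a$. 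For $p=1/2$ and $z\to1$ this is about $0.76^a$, far above $e^{-a(1-p)}=e^{-a/2}$, and most of that mass sits where the uniform side puts no mass. In the conditioned model, $e^{-a(1-p)}$ is simply a bound on the exponential mass $e^{-a(1-pz)}$ beyond the uniform's support.

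\textbf{The missing quantitative core.} This is the more serious gap. The claim has explicit constants, while Steps 1--3 give only $O(p)+O(1/a)$ heuristically, and both fallbacks fail as stated.

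In the monotone (common-uniform) coupling, $X_i$ is a function of $V_i$ and $Y_j$ of $V_j$. So $X_i=Y_j$ is a null event for $i\neq j$, and also for $i=j$, where two distinct quantile functions agree only at isolated points. Hence $\bP[X\neq Y]$ is essentially $1$, not $O(1/a)+O(p)$; exact equality can only come from the maximal coupling of the two laws.

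In the last fallback the likelihood ratio is not monotone. After rescaling it is $(\alpha-y)^{a-1}e^{ay}\alpha^{-a}$, which starts at $1/\alpha\ge 1$ and may increase before dropping to $0$. What must be shown is a single crossing, and the crossing value must actually be computed.

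This is the paper's argument, for the conditioned variables. Put $y=p(x-z)$ and $\alpha=1-pz\in[1-p,1]$. The densities become $a(\alpha-y)^{a-1}\alpha^{-a}$ on $[0,\alpha]$ and $ae^{-ay}$ on $[0,\infty)$. The function $h(y)=e^{ay}(\alpha-y)^{a-1}-\alpha^a$ satisfies $h(0)=\alpha^{a-1}(1-\alpha)\ge 0>h(\alpha)$. Its derivative $h'(y)=e^{ay}(\alpha-y)^{a-2}\bigl(a(\alpha-y)-(a-1)\bigr)$ changes sign at most once, so $h$ has a unique zero $y_a\in(0,\alpha)$. Therefore
\[
\tv(X,Y)=e^{-ay_a}-\Bigl(1-\frac{y_a}{\alpha}\Bigr)^{a}=e^{-ay_a}\,(pz+y_a)\le p+\frac{1}{ea},
\]
where $h(y_a)=0$ gives $(1-y_a/\alpha)^a=(\alpha-y_a)e^{-ay_a}$. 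The paper's bookkeeping additionally carries the tail $e^{-a\alpha}\le e^{-a(1-p)}$ as a separate term.
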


\begin{proof}[Proof of Lemma \ref{lemma:coupling_G}]
For $t\in \bN$, define the events 
$$\cZ_t = \{\forall s \leq t, Z_s = Z^*_s\},\quad \textrm{and} \quad \cE_t = \{\forall s \leq t, e_s = e^*_s\}.$$
Observe that $(\cZ_t)_t$ and $(\cE_t)_t$ are both decreasing. Furthermore, $\{\cI_t = \cI^*_t \} \subseteq \cZ_t \cap \cE_t$, and $\{\tau = \tau^*\} \subseteq \cZ_{\tau \land \tau^* +1}$. Indeed, it follows from the fact that
\begin{equation*}
\tau = \inf \{t\in \bN : Z_t > 1\}, \quad \quad \tau^* = \inf\{t\in \bN : Z^*_t > 1 \}.
\end{equation*} 
Hence, it is sufficient to show that there exists a coupling $\bP_{*}$, such that
$$\bP_*[\cZ_{\tau \land \tau^* +1} \cap \cE_{\tau^*}] =  1-o(1).$$ 
Now, for $i,j \in [k],$ $i\leq j$ and $t\in \bN$, denote by 
$$U_t(i,j) = U_t \cap (V_i \times V_j), \quad \quad U^*_t(i,j) = U^*_t \cap (V_i \times V_j).$$ 
Define
$$ Y_t(i,j)= Y_t(j,i) = \inf_{ \{u,w\} \in U_t(i,j)}\pi(uw),\quad \quad Y^*_t(i,j) = Y^*_t(j,i) = \inf_{\{u,w\} \in U_t^*(i,j)}\pi^*(uw),$$
where, by convention, $\inf \emptyset = \infty$. Finally, define the event 
$$\cY_t = \{\forall s\leq t, \forall i,j \in [k], Y_s(i,j)=Y_s^*(i,j)\}.$$
Observe that $\cZ_t \subseteq \cY_t$. When $t=0$, with probability 1, $\cI_0 = \cI^*_0$. Fix $i,j \in [k]$. Notice that, conditionally on $\cF_t \coloneqq \sigma(\cY_s,\cI_s,e_s$ for all $s\leq t$), $U_t(i,j) \subseteq U^*_t(i,j)$ for all $i,j \in [k]$. Denote by $\wt{Y}^*_t(i,j) = \inf_{\{uw\} \in U_t(i,j)} \pi^*(uw)$. Then, notice that if $e_t \notin V_i\times V_j$, then $Y_t(i,j) = Y_{t-1}(i,j) =Y^*_{t-1}(i,j) = Y^*_{t}(i,j)$. Otherwise, $Y_t(i,j)$ is the minimum of $|U_t(i,j)|$ i.i.d. uniform random variables on $[0,1/p_{ij}]$ conditioned on being greater than $Z_{t-1}$, whereas $\wt{Y}^*_t(i,j)$ is the minimum of $|U_t(i,j)|$ i.i.d. exponential random variables of parameter $p_{ij}$, conditioned on being greater than $Z_{t-1}$. Crucially, in this case, both $Y_t(i,j)$ and $\wt{Y}_t^*(i,j)$ are independent of the past. Hence, we can apply Claim \ref{claim:calcul_Y} by setting $X = \wt{Y}_t^*(i,j)$, $Y=Y_t(i,j)$, $a = |U_t(i,j)|$, $p=p_{ij}$ and $z=Z_{t-1}$. We deduce that $Y_t(i,j)$ and $\wt{Y}_t^*(i,j)$ can be coupled in such a way that
$$ \bP[Y_t(i,j) \neq \wt{Y}^*_t(i,j) | \cY_t,\cI_t=\cI^*_t] \leq \frac{1}{|U_t(i,j)|}+ e^{-|U_t(i,j)|(1-p_{ij})}+p_{ij} = O(\log n/n),$$
since $|U_t(i,j)| = \Omega(n)$ for all $t=o(n)$. By the union bound over all possible choices of $i,j$ (less than $k^2$) and then over all times up to $\tau+1 = o(n)$ (by the established upper bound in Theorem \ref{thm_reachability}\ref{subcritical_reachability}), we deduce that $\cY_{\tau+1}$ holds with high probability, which implies that $\tau = \tau^*$. It remains to show that $e_t=e^*_t$ for all $t\leq \tau$, with high probability. Fix $t \leq \tau$ and denote by $(i_0,j_0) = \argmin_{(i,j)\in [k]^2} Y_t(i,j)$. Then, notice that any edge from $|U^*_t(i_0,j_0)|$ has equal probability to be $e^*_t$, since their labels are i.i.d. Similarly, $e_t$ is a uniformly random edge from $|U_t(i_0,j_0)|$. Thus, we can couple the two in such a way that 
$$e_t = \wt{e}^*_t \coloneqq \argmin_{uw \in U_t(i_0,j_0)} \pi^*(uw).$$ 
Hence, if we denote by $\cU_t = \sigma(U_t(i_0,j_0),U_t^*(i_0,j_0))$,
$$\bP[e_t\neq e^*_t|\cU_{t}] = \bP[Y_t^*(i_0,j_0) \neq \wt{Y}^*_t(i_0,j_0)|\cU_{t}] = \frac{|U^*_t(i_0,j_0)|-|U_t(i_0,j_0)|}{|U^*_t(i_0,j_0)|}.$$
Now, observe that $|U^*_t(i_0,j_0)|-|U_t(i_0,j_0)| \leq t+ t\textrm{ deg}_{\max}(G)$. 
This follows directly from 
$$ U_t^* = U^*_{t-1} \cup ((I^*_t\backslash I^*_{t-1}) \times (I^*_t)^c ), \quad \quad U_t = (U_{t-1} \backslash \{e_t\})\cup \{uw\in (I_t\backslash I_{t-1})\times I_t^c: \pi(uw)>Z_{t-1}\},$$
since there are $t$ constructed edges by time $t$, and at each step, if $e_t \not\subseteq I_t$, that is we add a new vertex $v_t = I_t\backslash I_{t-1}$, then we only consider those edges between $v_t$ and $I^c_t$ that have labels greater than $Z_{t-1}$, which does not exceed 1 for all $t \leq \tau\land \tau^*$. Now, since by standard arguments (namely Chernoff bound and the union bound), $\textrm{deg}_{\max}(G) = O_p(\log n)$, and since it follows from Remark \ref{remark:preliminary_lower_bounds} that $|U^*_t(i_0,j_0)| \geq a_{i_0} (n-1) |I^*_t \cap V_{j_0}| = \Omega_p(tn)$,
$$\bP[e_t\neq e^*_t|\cU_{t}]= \frac{|U^*_t(i_0,j_0)|-|U_t(i_0,j_0)|}{|U^*_t(i_0,j_0)|} = O_p\left(\frac{\log n}{n} \right).$$
Therefore, putting everything together yields that
$$ \bP[\cZ_{\tau \land \tau^* +1} \cap \cE_{\tau^*}] = O\left( n^{(1+\epsilon)\theta} \times \frac{\log n}{n}\right), $$
where we used that for all $\epsilon > 0$, $\tau < n^{(1+\epsilon)\theta}$ whp, which follows from the upper bound in Theorem \ref{thm_reachability}\ref{subcritical_reachability}. Choosing $\epsilon$ such that $(1+\epsilon)\theta < 1$, allows us to conclude.
\end{proof}

\subsection{Proof of Proposition \ref{proposition_subcritical_structure} and the lower bounds in Theorem \ref{thm_reachability}}
\label{proof_lower_bounds}

The proof of the lower bounds in Proposition \ref{proposition_subcritical} is an easier version of the proof of the lower bounds in the critical regime $\omega_n = \log n$, so we omit the former. 

\paragraph{Subcritical regime $\theta < 1$.} It follows from Lemma \ref{lemma:coupling_G} that it is sufficient to prove our main results for $(G^*,\pi^*)$. We do so by \emph{approximating} the canonical sequence of $(G^*,\pi^*)$ by a block-regular one. Namely, it is sufficient to construct a sequence $\boldsymbol{F} = (F_t)_{t\geq 1}$ of sets of pairs of vertices that is block-regular with respect to $(V_1,\ldots,V_k)$, of rate $n_* = n-o(n)$, such that $\cI^{\boldsymbol{F}}$ is a tree, and $\tau^{\boldsymbol{F}} \sim_p \tau^*$. Indeed, we can deduce the lower bound of the size of the reachable set and the embedding of the weighted random recursive tree immediately from Theorem \ref{key:theorem}. For the proportions of reachable vertices per type, it is sufficient to show that
$$ |B_{(G^*,\pi^*)} \cap V_i|  \sim_p |I^{\boldsymbol{F}}\cap V_i|. $$
A direct consequence of the upper bound from Theorem \ref{thm_reachability}\ref{subcritical_reachability} and Remark \ref{result:multiTSBM} is that for any $\epsilon>0$, $\tau^* = \tau < n^{\theta(1+\epsilon)}$. Hence, it follows from Lemma \ref{no_fail} that the extra steps $E^*_{\tau^*}=o(\tau^*)$. Remark \ref{remark:preliminary_lower_bounds} implies that
$$ |B_{(G^*,\pi^*)} \cap V_i| \sim_p \tau^* - \sum_{j\neq i} |B_{(G^*,\pi^*)} \cap V_j| \lesssim \tau^* - \tau^{\boldsymbol{F}} + |I^{\boldsymbol{F}}\cap V_i| \sim_p |I^{\boldsymbol{F}}\cap V_i|,$$
as desired. We proceed by showing a construction of $\boldsymbol{F}$.

Let $S_1$ be an arbitrary set of $k \tau^*$ vertices, such that $|S_1 \cap V_i| = \tau^*$ for all $i\in [k]$. Set $F_1 = U^*_1 \backslash \{uw: u \in S_1\}$. Having constructed $S_{t-1}$ and $F_{t-1}$, denote by $v^{\boldsymbol{F}}_{t-1}$ the only element of $I^{\boldsymbol{F}}_{t-1}\backslash I_{t-2}^{\boldsymbol{F}}$ (which is never, since no extra edges are allowed), and let
$$ S_t = S_{t-1} \backslash \{w_t \},$$
where $w_t$ is an arbitrary vertex of $S_{t-1}$, satisfying $\sigma(w_t) = \sigma(v^{\boldsymbol{F}}_{t-1})$. We may always find such a vertex, since for all $i\in[k]$, 
$$|S_{t-1}\cap V_i| \geq |S_1 \cap V_i| - (t-1) \geq \tau^* - \tau^{\boldsymbol{F}} + 1 \geq 1.$$ 
Set
\begin{equation}
\label{embed_sequence}
F_t = F_{t-1} \backslash (I_{t-1}\times \{v^{\boldsymbol{F}}_{t-1}\}) \cup (I_{t-1}\times \{w_t\}).
\end{equation}
It is straightforward to check that $\boldsymbol{F}$ is block-regular of rate $n_* = n- k\tau^* = (1-o(1))n$. Moreover, by construction, no extra edges are allowed, that is $\cI^{\boldsymbol{F}}$ is a tree and, hence, $v^{\boldsymbol{F}}_t$ is well-defined. Finally, $I^* - I^{\boldsymbol{F}} \subseteq B_{(G^*,\pi^*)}(v,S_1)$, hence it follows from Lemma \ref{upper:S} that $|I^*| \sim_p |I^{\boldsymbol{F}}| = \tau^{\boldsymbol{F}}$. Since $\tau^* \sim_p |I^*|$ by Lemma \ref{no_fail}, we deduce that $\tau^{\boldsymbol{F}} \sim_p \tau^*$, as desired. 

\paragraph{Special case $\theta < 1/2$.} The only result left to prove in this regime is the one from Proposition \ref{proposition_subcritical_structure}, namely that the induced graph $G[B_{(G^*,\pi^*)}(v)]$ is with high probability a tree distributed as a weighted random recursive tree. It is sufficient to show that there exists a sequence $\boldsymbol{J}$ of sets of pairs of vertices that satisfies all the assumptions of Theorem \ref{key:theorem}, and is \emph{equivalent} to the canonical sequence $(U^*_t)_{t\geq 1}$, in the sense that $\cI^* = \cI^{\boldsymbol{J}}$. Consider the sequence $\boldsymbol{J}$, where, for each $t \geq 1$, $J_t$ contains all elements of $U^*_t$ with additional independent copies of all $uw \in U^*_t \cap (I^*_t \times I^*_t)$. In other words, $\boldsymbol{J}$ is the canonical sequence associated to a \emph{directed} version of $(G^*,\pi^*)$, where $\pi^*$ now associates an increasing sequence of exponential random variables to each \emph{ordered} pair of vertices, independent of the others, and where the notion of reachability is further constrained to directed paths. Hence, only the extra edges may be oriented in both directions, and $\boldsymbol{J}$ is, thus, regular of rate $n-1$. Crucially, at each step $t\geq 1$, there are exactly twice as many candidates for extra edges, that is,
$$|J_t\cap (I^{*}_{t-1}\times I^*_{t-1})| = 2|U^*_t \cap (I^*_{t-1}\times I^*_{t-1})|,$$
so Lemma \ref{no_fail} can be easily adapted to this setting. We deduce that, with high probability $(J_t \backslash U^*_t)\cap E(G^*) = \emptyset$ and, thus, $\cI^{\boldsymbol{J}}=\cI^*$ is a tree with high probability. It remains to show that $G[B_{(G^*,\pi^*)}(v)] = \cI^*$ with high probability. Conditionally on $\cI^*$ being a tree, denote by $v_t^*$ the vertex added at step $t$. Then, for all $t <\tau^*$,
$$\bP[\exists u\in I^*_{t-1}: \pi^*((uv^*_t)_1)<Z_t] \leq (t-1)p_{\max}, $$
by the union bound, and the fact that $Z_t < 1$. Hence, the expected number of such edges up to time $\tau^*$ is of order at most $\log n(\tau^*)^{2}/n = o(1)$, since $\theta < 1/2$, and the upper bound from Theorem \ref{thm_reachability}\ref{subcritical_reachability} applies to $\tau^*$, see Remark \ref{remark:preliminary_lower_bounds}. We conclude by Markov's inequality.

\paragraph{Supercritical regime $\theta > 1$.} We show that when $\theta>1$, almost all vertices may be reached from $v$. We do so by \emph{layering} a finite number of subcritical TSBMs on top of each other. This argument is formalized by the following proposition.

\begin{proposition}\label{layering}
Let $m \in \bN$ be a fixed integer and let $(G_i)_{i\in [m]}$ be a sequence of $\bS\bB\bM\left(n,k,\boldsymbol{a},\frac{1}{m}P\right)$. Then, we can couple $(G,\pi)$ with $(G_i)_{i\in[m]}$ in such a way that 
$$E(G) = \bigsqcup_{i=1}^{m} E(G_i), \quad \textrm{ and } \quad \forall i\in [m-1],\quad \max_{e \in E(G_i)} \pi(e) < \min_{e\in E(G_{i+1}) }\pi(e).$$
\end{proposition}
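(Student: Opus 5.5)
We will use the coupling from Section~\ref{subsection_multi}: assign to every unordered pair $uw$ an independent label $\pi(uw)\sim \mathrm{Unif}[0,1/p_{\sigma(u)\sigma(w)}]$, and declare $uw\in E(G)$ if and only if $\pi(uw)<1$. Conditionally on $G$, the labels of the edges are i.i.d.\ uniform on $[0,1]$. Their relative order is therefore a uniform random permutation of $E(G)$, so this $(G,\pi)$ is distributed as the TSBM. All the layers $G_i$ will be read off from this one family of labels.

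\textbf{Construction.} Partition $[0,1)$ into the $m$ consecutive intervals $J_i=[(i-1)/m,\,i/m)$, $i\in[m]$, and set
$$E(G_i)\coloneqq\{uw : \pi(uw)\in J_i\}.$$
We then check three properties.

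\emph{Disjoint union.} Since the $J_i$ partition $[0,1)$, every pair with $\pi(uw)<1$ lies in exactly one $E(G_i)$. Hence $E(G)=\bigsqcup_i E(G_i)$.

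\emph{Ordering of layers.} Every label in $J_i$ is strictly smaller than every label in $J_{i+1}$. This gives $\max_{e\in E(G_i)}\pi(e)<\min_{e\in E(G_{i+1})}\pi(e)$; the strict inequality holds almost surely, and also deterministically because the intervals are half-open.

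\emph{Distribution of each layer.} Fix $i$. For $u\in V_a$, $w\in V_b$,
$$\bP[\pi(uw)\in J_i]=\frac{1/m}{1/p_{ab}}=\frac{p_{ab}}{m}.$$
This is $0$ when $p_{ab}=0$, and it requires $1/p_{ab}\ge 1$, which holds since $p_{ab}\le 1$. The labels of distinct pairs are independent, so for each fixed $i$ the events $\{uw\in E(G_i)\}$ are independent over pairs. Thus $G_i\sim \bS\bB\bM(n,k,\boldsymbol{a},\tfrac1m P)$, and assumptions \ref{ass1} and \ref{ass2} are preserved with $\lambda_{ab}$ replaced by $\lambda_{ab}/m$.

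In addition, conditionally on $G_i$, the labels on $E(G_i)$ are i.i.d.\ uniform on $J_i$. Hence $(G_i,\pi|_{E(G_i)})$ is itself distributed as a TSBM with parameter $\tfrac1m P$, up to an affine rescaling of the labels that does not change their order. This is the form in which the proposition will be used for layering: increasing paths in consecutive layers can be concatenated into increasing paths of $(G,\pi)$.

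\textbf{Main subtlety.} The $G_i$ are not mutually independent, since the events $\{uw\in E(G_i)\}$, $i\in[m]$, are disjoint for a fixed pair. The statement only requires each $G_i$ to have the correct marginal law, and the construction delivers exactly that. In the subsequent layering argument we will apply results to one layer at a time, conditionally on what was revealed in earlier layers. Conditionally on $E(G_1),\dots,E(G_{i-1})$, a pair not yet used belongs to $G_i$ with probability
$$\frac{p/m}{1-(i-1)p/m}\ \ge\ \frac{p}{m},$$
and independently across pairs. So the conditional law of $G_i$ stochastically dominates an $\bS\bB\bM(n,k,\boldsymbol{a},\tfrac1mP)$ on the unused pairs, which differs from the full pair set only by the $O(n\log n)$ pairs already used. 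This domination is what we expect to need when chaining the layers.
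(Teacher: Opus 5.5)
Your proposal is correct and is essentially the paper's coupling: given $G$, each edge is assigned independently to a uniformly random layer, the layers occupy consecutive time windows, and the order within each layer is uniform. The difference is only in how this is realized: you read the layer off the subinterval $J_i$ containing the continuous label $\pi(uw)\sim \mathrm{Unif}[0,1/p_{\sigma(u)\sigma(w)}]$, which makes uniformity of $\pi$ and the within-layer TSBM structure automatic, whereas the paper draws discrete $W_e$ uniform on $[m]$, concatenates independent uniform permutations, and checks uniformity by exchangeability; your further remark that the layers are not independent, together with the conditional domination bound, is correct and makes explicit a point the paper leaves implicit.
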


\begin{proof}
To each edge $e \in E(G)$, associate a uniform random variable $W_e$ on $[m]$, independent of the others. Then, set $E(G_i) = \{e\in E(G):W_e = i\}$. Moreover, let $\pi_{|_i}$ be a uniform permutation of $E(G_i)$, independent of the others. Then, for $e \in E(G)$, set 
$$\pi(e) = \sum_{i=1}^{m} \indicatrice_{W_e=i} \left(\sum_{j=1}^{i-1}\max_{e^\prime \in E(G_{j})}\pi_{|_{j}}(e^\prime) + \pi_{|_i}(e) \right), $$
where, by convention, $\pi_{|_0} \equiv 0$. This coupling yields the correct marginals for $\pi$ and $E(G_i)$, for all $i$. Indeed, for all distinct $u,v \in V(G), i\in [m]$,
$$ \bP[uv \in E(G_i)] = \bP[uv \in E(G)]\bP[W_{uv} = i|uv \in E(G)] = p_{\sigma(u)\sigma(v)} / m,$$
independently of the other pairs of vertices. Hence, $G_i$ is distributed as a $\bS\bB\bM$ of probability matrix $P/m$, as desired. It remains to check whether $\pi$ is a uniform permutation of the edges. Observe that, it is, by definition, the concatenation of $m$ uniform permutations on the random partition $\sqcup_{i\in[m]}E(G_i)$ of $E(G)$. Hence, if all such partitions are equiprobable and exchangeable, $\pi$ will be a uniform permutation itself. Since the random variables $\{W_e, e\in E(G)\}$ are i.i.d., we may conclude.
\end{proof}

Consider a decomposition of the supercritical TSBM $(G,\pi)$ into $2\lfloor \theta \rfloor$ subcritical ones, of probability matrix $P/(2\lfloor \theta \rfloor)$, that we denote by $(G_i,\pi_i)$, for $i \in [2\lfloor \theta \rfloor]$. Construct the reachable set of $v$ in $G_1$, that we denote by $B_1(v)$. Then, for each $w \in [n] \backslash B_1(v)$, construct $B_2(w)$ the reachable set via decreasing paths of $w$ in $G_2$. The results from Theorem \ref{thm_reachability}\ref{subcritical_reachability} apply to $B_2(w)$, since reversing the temporal order turns decreasing paths into increasing paths and does not change the distribution of the model. In fact, the lower bounds on the size of the reachable set in the subcritical regime hold with probability at least $1-n^{-\varepsilon}$, for some positive $\varepsilon > 0$. Thus, with high probability at least $(1-o(1))n$ of the vertices from $[n]\backslash B_1(v)$ reach at least $n^{1/2+\delta}$ vertices via decreasing paths in $G_2$, for some $\delta >0$, since $\theta/(2\lfloor \theta \rfloor) > 1/2$. Denote this set by $S$. In the same way, the number of vertices reached from $v$ via increasing paths in $G_1$, is $|B_1(v)| \geq n^{1/2 + \delta}$. Since, by construction, all edges of $G_1$ arrive before those of $G_2$, for all $w \notin B_1(v)$,
$$w \in B_{(G,\pi)}(v) \quad \textrm{ if and only if } \quad B_1(v)\cap B_2(w) \neq \emptyset. $$
This condition is satisfied for all elements of $S$ with high probability, by the birthday paradox. Indeed, for any $i \in [k]$ and $w\in S$, both $B_1(v)\cap V_i$ and $B_2(w)\cap V_i$ are uniformly random subsets of $V_i$ of size $\Omega_p(n^{1/2+\delta})$. Hence,
$$\bP\left[B_1(v)\cap B_2(w)\cap V_i = \emptyset\bigg{|}\min\{B_1(v)\cap V_i,B_2(w)\cap V_i\}= m\right] \leq \left( 1-\frac{m}{a_i n} \right)^{m} \leq e^{-m^{2}/n}, $$
so setting $m = n^{1/2+\delta}$ yields the desired result. Since $|S| = (1-o(1))n$ and $S \subseteq B_{(G,\pi)}(v)$ with high probability, we may conclude.
\subsection{Proof of Theorem \ref{thm_longest_paths}}
\label{section:longest_paths}

We start with the lower bounds in \ref{path} and \ref{path_v}. In the case $\theta<1$, the longest increasing path of a fixed endpoint $v$ is naturally lower bounded by the height of the weighted random recursive tree we embed in it, which is, with high probability, $e\theta \log n$ (see Theorem 5, \cite{S18}). In the supercritical regime $\theta > 1$, it is straightforward to see that we can concatenate $\lceil \theta \rceil$ paths of length $(\theta e \log n )/ \lceil \theta \rceil$ using the layering arguments from Proposition \ref{layering}. \\

To prove the remaining bounds in \ref{path_uv}, it is sufficient to show that we can construct an increasing path from $u$ to $v$ of length $x \log n$, for any $x \in (\gamma(\theta),\beta(\theta))$. The proof is identical to the one in Section 5 from \cite{BKL24}, provided we show an equivalent of their Lemma 15, where $P$ plays the role of $p$, namely that, in a temporal stochastic block model of parameter $P/2$, there exists $\delta > 0$, such that, with high probability, there are at least $n^{1/2 + \delta}$ vertices that can be reached from $v$ via increasing paths of length $\frac{x}{2}\log n \pm (\log n)^{1/2}$. In the case $\theta < 2$, this is an immediate consequence of the results on the profile of the weighted random recursive tree we embed into the reachable set, see Theorem 5 from \cite{S18}. For general $\theta \geq 2$, analogous arguments using Proposition \ref{layering} to decompose $(G,\pi)$ into subcritical TSBMs, yield the desired result.

\section{Proof of Theorem \ref{key:theorem}}
\label{section:proof_key_thm}

Consider a generalized P\'olya urn process $P=(P(n))_{n \in \bN}$, defined as follows. There are balls of $q\in\bN$ types, and the process $P(n)=(P_1(n),\ldots,P_q(n)) \in \bN^q$ keeps track of the composition of the urn at time $n$. In particular, $P_i(n)$ is the number of balls of type $i$ in the process after $n$ steps. The urn starts with a given initial condition $P(0)$, possibly random. Associate to each type $i$ an \emph{activity} $w_i \geq 0$, and a random $q$-dimensional non-negative vector $\xi_i = (\xi_{i1},\ldots,\xi_{iq})$. Let ($\xi^{(n)}_i)_{i\in [q],n\in \bN}$ be i.i.d. copies of $(\xi_i)_{i\in [q]}$. The urn evolves according to a Markov process. At each time $n\geq 1$, a ball $U_n$ is drawn from the urn at random, considering that any ball of type $i$ has weight $w_i$, that is
$$\bP[U_n \textrm{ of type } i] = \frac{w_i P_i(n-1)}{\sum_{j \in [q]}w_j P_j(n-1) } .$$
The drawn ball is then returned along with $\sum_{i\in [q]}\indicatrice_{\{U_n \textrm{ of type }i\}}\xi^{(n)}_{ij}$ balls of type $j$. In other words,
$$P(n) = P(n-1) + \sum_{i=1}^q \indicatrice_{\{U_n \textrm{ of type }i\}}\xi^{(n)}_{i} .$$
Finally, define the \emph{mean replacement matrix} $A \in \bR^{q \times q}$ of the P\' olya urn $P$ by
$A_{ij} = w_i \bE[\xi_{ij}]$ for all $i,j \in [q]$. Recall that, if $A$ is irreducible, then by the generalization of the Perron-Frobenious theorem for non-negative matrices (see Chapter 7, \cite{M23}), there exists a unique simple positive maximum eigenvalue $\vartheta>0$ of $A$, and its associated right eigenvector $\nu$ is comprised only of non-negative entries. The following result is due to Janson, and characterizes the asymptotic behavior of such P\' olya urns.

\begin{theorem}[Theorem 3.21, \cite{J04}]
\label{PU_Janson}
    If $A$ is irreducible, then as $n\to \infty,$
    $$\frac{1}{n}P(n) \toas \vartheta\nu, $$
    where $\nu$ is normalized to satisfy $(w_1,\ldots,w_k)\cdot \nu=1.$
\end{theorem}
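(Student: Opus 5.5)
The plan is to prove this by stochastic approximation (the ODE method), and to fall back on the Athreya–Karlin continuous-time embedding if the boundary issues below become awkward. I will work under Janson's standing assumptions: the $\xi_{ij}$ have finite second moments, and the urn does not die out, i.e.\ $w\cdot P(n)>0$ for all $n$ (which holds a.s.\ in the setting of Theorem \ref{key:theorem}).

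Set $Y_n\coloneqq P(n)/n$ and let $\mathcal F_n$ be the natural filtration. Conditioning on $\mathcal F_{n-1}$, the expected increment is
\[
\bE[P(n)-P(n-1)\mid \mathcal F_{n-1}] =\sum_i \frac{w_iP_i(n-1)}{w\cdot P(n-1)}\bE[\xi_i] = \frac{A^{T}Y_{n-1}}{w\cdot Y_{n-1}}.
\]
This gives the recursion
\[
Y_n=Y_{n-1}+\tfrac1n\bigl(h(Y_{n-1})+M_n\bigr),\qquad h(y)\coloneqq \frac{A^Ty}{w\cdot y}-y,
\]
where $M_n$ is a martingale difference. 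Its conditional second moments are bounded whenever $Y_{n-1}$ lies in a compact set avoiding $\{w\cdot y=0\}$. Since $\sum_n n^{-2}<\infty$, the noise $\sum_n M_n/n$ converges a.s. By the Kushner–Clark / Benaïm theorem, the linear interpolation of $(Y_n)$ is then an asymptotic pseudotrajectory of the flow $\dot y=h(y)$.

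The zeros of $h$ are the non-negative vectors with $A^Ty=(w\cdot y)\,y$, i.e.\ Perron eigenvectors scaled so that $w\cdot y=\vartheta$; this is exactly the limit $\vartheta\nu$ in the statement, once the eigenvector convention is matched with the paper's. To show this point is a global attractor on the positive cone, I will change time by $ds=dt/(w\cdot y)$. The direction of $y$ then follows the linear flow $u'=A^Tu$ up to a scalar factor. Because $A$ is irreducible, $e^{sA^T}$ has strictly positive entries for $s>0$, so by Perron–Frobenius the direction $u/\|u\|$ converges to the Perron direction from any non-zero non-negative initial condition; no aperiodicity is needed in continuous time. Along that direction the scalar $r=w\cdot y$ obeys $\dot r\approx \vartheta - r$ asymptotically, so $r\to\vartheta$. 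A Lyapunov-type argument combining these two observations gives global convergence of the ODE, and hence $Y_n\toas\vartheta\nu$ by the limit-set theorem for asymptotic pseudotrajectories.

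I expect the main obstacle to be the a priori control needed to apply that theorem: showing that $(Y_n)$ stays a.s.\ in a compact subset of the open cone, and in particular that $\liminf_n w\cdot Y_n>0$. The upper bound follows from the strong law applied to the i.i.d.\ replacements, $\|P(n)\|\le \|P(0)\|+\sum_{m\le n}\sum_i\|\xi^{(m)}_i\|=O(n)$ a.s. For the lower bound I would first try a comparison over blocks of $q$ steps. Irreducibility guarantees that, from any composition, within $q$ draws a type with positive activity is added with conditional probability bounded below. A martingale (Freedman-type) bound then shows that total activity grows at least linearly.

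If this comparison gets messy, I will switch to the classical route. Embed the urn in a continuous-time multitype Markov branching process in which each type-$i$ ball has an $\mathrm{Exp}(w_i)$ clock and, when it rings, produces $\xi_i$. The mean matrix is $A$, and the Kesten–Stigum theorem gives $e^{-\vartheta t}Z(t)\to W\nu$ a.s., with $W>0$ on non-extinction. The number of draws by time $t$ is then $N(t)\sim \vartheta^{-1}W(w\cdot\nu)e^{\vartheta t}$, and taking the ratio $Z(t)/N(t)$ along the jump times yields $P(n)/n\to\vartheta\nu$ under the normalization $w\cdot\nu=1$.
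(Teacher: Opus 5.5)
The paper gives no proof of this statement; it imports Janson's Theorem 3.21. Janson's argument is essentially your fallback: embed the urn in a continuous-time multitype Markov branching process, use the almost sure limit $e^{-\vartheta t}Z(t)\to W\nu$ with $W>0$, and read it off at the draw times.

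Your primary route, through stochastic approximation, is a genuinely different and valid proof. Its a priori bounds are easier than you expect. Since $A$ is irreducible with $\vartheta>0$, no row of $A$ vanishes, so every $w_i>0$ and every $\bP[\xi_i\neq 0]>0$. Hence each draw adds a ball of activity at least $\min_i w_i$ with conditional probability at least $\min_i \bP[\xi_i\neq 0]$. The bound $w\cdot P(n)\geq cn$ eventually a.s.\ then follows from a bounded-increment martingale, with no need for blocks of $q$ draws.

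Your time change also integrates explicitly: $y(s)=e^{sA^{T}}y_0/D(s)$ with $D(s)=1+\int_0^s w\cdot e^{rA^{T}}y_0\,dr$ and $t=\log D(s)$. This is the mean-field version of the embedding: mean population over one plus the mean number of draws, with $t$ playing the role of $\log n$. So both proofs rest on the same Perron--Frobenius fact about $e^{sA^{T}}$. Yours replaces branching-process limit theory by Bena\"{\i}m's limit-set theorem and stays in discrete time. The embedding yields more: the random limit $W$, functional limit theorems, and reducible urns with dominating types.

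Two points should be made explicit.

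First, the limit-set theorem needs $\vartheta\nu$ to be an attractor, not merely the limit of every trajectory. A flow on a circle with a single semistable equilibrium shows the difference. The formula above gives convergence uniformly on compact subsets of $\{y\geq 0,\ y\neq 0\}$, because the left Perron vector of $A^{T}$ is strictly positive. Then $L=\phi_t(L)$ for the compact invariant limit set $L\subseteq\{y\geq0,\ w\cdot y\geq c\}$ forces $L=\{\vartheta\nu\}$.

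Second, the equilibrium you find is the Perron vector of $A^{T}$, i.e.\ a left eigenvector of $A$ as the paper defines it; the paper's $A_{ij}=w_i\bE[\xi_{ij}]$ is the transpose of Janson's matrix. State this outright rather than appealing to `matching conventions'. For non-symmetric $A$ the literal `right eigenvector of $A$' is the wrong vector, and the $A^{T}$ version is what the paper actually uses (with $A=\Lambda^{T}$, giving $\theta x_\theta$).
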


Recall the definition of $\Lambda \in \bR^{k \times k}$ from Section \ref{introduction}, given by
$$\forall i,j\in [k], \Lambda_{ij} \coloneqq a_j \lambda_{ij},$$
and recall that $\theta > 0$ is its largest eigenvalue. In this setting, we work with $(G^*,\pi^*)$, a fixed vertex $v$, and a sequence $\boldsymbol{F}$ that is block-regular with respect to $(V_1,\ldots,V_k)$, of rate $n_* = (1-o(1))n$. Furthermore, throughout this section, $\cI^{\boldsymbol{F}}$ is conditioned on being a tree. \\

Now, we need to understand the distribution of the labels $(Z^{\boldsymbol{F}}_t)_{t \in [\tau^{\boldsymbol{F}}]}$. Recall that $\sigma(w)= i $ if $w \in V_i$. For $i \in [k]$, $t \geq 0$, denote by $X^{\boldsymbol{F}}_i(t) = |I^{\boldsymbol{F}}_t \cap V_i|$ the number of vertices of type $i$ already added to the tree by time $t$. Initially, $X^{\boldsymbol{F}}(0) = (X^{\boldsymbol{F}}_1(0),\ldots,X^{\boldsymbol{F}}_k(0)) = b_{\sigma(v)}$, where recall that $(b_i)_{i\in [k]}$ is the canonical basis of $\bR^k$. By construction, $Z^{\boldsymbol{F}}_1$ is the minimum of $n-1$ independent exponential random variables, hence 
$$Z^{\boldsymbol{F}}_1 \sim \textrm{Exp}\left(\sum_{i=1}^k a_i n_* p_{i\sigma(v)} \right) = \textrm{Exp}( (\boldsymbol{1} \cdot \Lambda \cdot X^{\boldsymbol{F}}(0))n_*\log n/n ). $$ 
Similarly, at step $t$, $Z^{\boldsymbol{F}}_t$ is the minimum of $tn_*$ exponential random variables, conditionned to be larger than $Z^{\boldsymbol{F}}_{t-1}$. By the memoryless property of the exponential distribution, 
\begin{equation}
\label{eq:Z}
Z^{\boldsymbol{F}}_t-Z^{\boldsymbol{F}}_{t-1} \sim \textrm{Exp}( ( \boldsymbol{1}\cdot \Lambda \cdot X^{\boldsymbol{F}}(t))n_*\log n/n ).
\end{equation}
In order to understand the distribution of the edge labels $(Z^{\boldsymbol{F}}_t)_t$, we need to first analyze the process $(X^{\boldsymbol{F}}(t))_t$. We do so by coupling it to the following P\' olya urn process. Let $P=(P(t))_{t\in\bN}$ be a P\' olya urn of $k$ types, activities $w \in \bR^k$, given by $w_i = \sum_{\ell \in [k]} \Lambda_{i\ell} $, and mean replacement matrix $\Lambda^T$. It readily follows from Theorem \ref{PU_Janson} that
\begin{equation}
\label{PU:limit}
\frac{1}{t}P(t) \toas \theta x_\theta, \quad \textrm{as } t\to \infty. 
\end{equation}

\begin{lemma}
\label{PU}
There exists a coupling of $(X^{\boldsymbol{F}}(t))_t$ and $(P(t))_t$, such that for all $t\leq \tau^{\boldsymbol{F}}$,
$$P(t)= X^{\boldsymbol{F}}(t).$$
\end{lemma}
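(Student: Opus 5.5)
The plan is to couple the two processes step by step. I will show that, conditionally on the history of the exploration up to step $t$, the type of the vertex added at step $t+1$ has (up to a negligible error) the same law as the type of the ball added to the urn at step $t+1$ given $P(t)=X^{\boldsymbol{F}}(t)$. Both processes start from $b_{\sigma(v)}$, so induction on $t$ then gives $P(t)=X^{\boldsymbol{F}}(t)$ for all $t\le\tau^{\boldsymbol{F}}$. Since $\cI^{\boldsymbol{F}}$ is conditioned to be a tree, every step adds exactly one new vertex. Hence $X^{\boldsymbol{F}}(t+1)-X^{\boldsymbol{F}}(t)=b_{\sigma(v^{\boldsymbol{F}}_{t+1})}$, and each urn step likewise adds exactly one ball.

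\textbf{Step 1: law of the new edge.} Let $\cF_t$ be the $\sigma$-algebra generated by the exploration up to step $t$. Admissibility says that every pair in $F_{t+1}$ is an unexposed pair with label above $Z^{\boldsymbol{F}}_t$. By the memoryless property of the Poisson processes $\Pi(uw)$, the residual labels $\pi^*(e)-Z^{\boldsymbol{F}}_t$ for $e\in F_{t+1}$ are, given $\cF_t$, independent exponentials. The one attached to $e\in V_i\times V_j$ has rate $p_{ij}$. This is the same fact that gives \eqref{eq:Z}. The minimizer $e^{\boldsymbol{F}}_{t+1}$ is then distributed with probability proportional to its rate. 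For a parent $u\in I^{\boldsymbol{F}}_t\cap V_j$, block-regularity gives that the number of pairs in $F_{t+1}$ joining $u$ to $V_i$ is $n_*a_i$ up to rounding. Therefore
$$\bP\big[\sigma(v^{\boldsymbol{F}}_{t+1})=i,\ \sigma(\text{parent})=j \,\big|\, \cF_t\big]\;\propto\; X^{\boldsymbol{F}}_j(t)\,a_i\lambda_{ij}\;=\;X^{\boldsymbol{F}}_j(t)\,\Lambda_{ij}.$$
The factor $n_*\log n/n$ cancels in the normalization.

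\textbf{Step 2: matching with the urn.} In the urn, a ball of type $j$ is drawn with probability proportional to $w_jP_j(t)$, where $w_j=\sum_\ell\Lambda_{\ell j}$ is the column sum; I adjust the index convention of $\Lambda$ to match the paper's. The replacement vector is $\xi_j=b_i$ with probability $\Lambda_{ij}/w_j$. This gives exactly the joint law $P_j(t)\Lambda_{ij}/\sum_{i',j'}P_{j'}(t)\Lambda_{i'j'}$, matching Step 1. The mean replacement matrix is $A_{j\ell}=w_j\bE[\xi_{j\ell}]=\Lambda_{\ell j}$, i.e.\ $\Lambda^T$, which is irreducible by assumption \ref{ass1}. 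So given equality at step $t$, I draw the urn ball and the new vertex's type jointly, through their common conditional law. This keeps them equal at step $t+1$.

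\textbf{Main obstacle: rounding and exactness.} Block-regularity only gives counts in $\{\lfloor m a_i\rfloor,\lceil m a_i\rceil\}$ rather than exactly $n_*a_i$. The conditional laws therefore differ by $O(1/n)$ in total variation at each step. I will either absorb this by choosing $n_*$ and the sequence $\boldsymbol{F}$ so that the counts are exact, or couple optimally at each step. In the latter case a union bound over $t\le n^{(1+\epsilon)\theta}$ with $(1+\epsilon)\theta<1$ gives total failure probability $O(n^{(1+\epsilon)\theta-1})=o(1)$, so the equality holds whp. I also need to check that conditioning on $\cI^{\boldsymbol{F}}$ being a tree does not bias the type choice. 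Block-regularity already excludes internal pairs, so the tree event concerns only the process, not the distribution of the types.
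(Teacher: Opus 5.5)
Your proposal is correct and follows the paper's proof. Both identify the drawn ball with the type of the parent endpoint and the added ball with the type of the new vertex, and both obtain the joint law $\propto X^{\boldsymbol{F}}_j(t)\Lambda_{ij}$ from competing exponentials; taking the activities as column sums of $\Lambda$ (the consistent reading of the paper's $w_i=\boldsymbol{1}\cdot\Lambda\cdot b_i$) then gives the mean replacement matrix $\Lambda^T$. The one difference is that the paper treats the per-parent block counts as exactly $a_in_*$ (its convention $|V_i|=a_in$) and so claims exact equality, whereas your rounding fallback gives equality only with high probability, which is all that is used later in Lemma \ref{concentration_labels}.
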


\begin{proof}
Recall that, throughout this section, $\cI^{\boldsymbol{F}}$ is conditioned on being a tree. Denote by $u_t \in I^{\boldsymbol{F}}_{t-1}$ and $v_t \in V\backslash I^{\boldsymbol{F}}_{t-1}$ the endpoints of $e^{\boldsymbol{F}}_t$. We proceed by induction. Here, the balls of the urn at time $t$ are identified with the constructed set $I^{\boldsymbol{F}}_t$. A ball of type $i$ corresponds to a vertex from $V_i$. The initial condition coincides by design. For $t \geq 1$, set $U_t= u_t$. Thus, $U_t$ is of type $i \in [k]$ if $U_t \in V_i$. Furthermore, conditionally on $U_t\in V_i$, set for all $j \in [k]$,
$$\xi^{(t)}_{ij} = \indicatrice_{v_t \in V_j}, $$
which is a non-negative $k$-vector. This coupling yields the correct marginal distribution of $(P(t))_t$. Indeed, for all $t\geq 1$, first observe that 
$$e^{\boldsymbol{F}}_{t+1} \in (I^{\boldsymbol{F}}_{t}\cap V_i)\times V_j \Longleftrightarrow \min_{(u,v)\in F_t\cap \left((I^{\boldsymbol{F}}_{t}\cap V_i)\times V_j\right)} \pi^{\boldsymbol{F}}(u,v) < \min_{(u,v)\in F_t \backslash \left((I^{\boldsymbol{F}}_{t}\cap V_i)\times V_j\right)} \pi^{\boldsymbol{F}}(u,v),  $$
so, it follows from (\ref{eq:Z}) that
$$\bP[e^{\boldsymbol{F}}_{t+1} \in (I^{\boldsymbol{F}}_{t}\cap V_i)\times V_j|{\cI}^{\boldsymbol{F}}_{t}] = \frac{b_j\cdot \Lambda \cdot X^{\boldsymbol{F}}_i(t)b_i }{\boldsymbol{1}\cdot \Lambda \cdot X^{\boldsymbol{F}}(t)} = \frac{\Lambda_{ji}X^{\boldsymbol{F}}_i(t)}{\boldsymbol{1}\cdot \Lambda \cdot X^{\boldsymbol{F}}(t)},  $$
where we used the fact that for any two independent exponential random variables of parameters $x,y \in \bR^+$, $\bP[\textrm{Exp}(x)<\textrm{Exp}(y)] = x/(x+y)$ and $\min(\textrm{Exp}(x),\textrm{Exp}(y)) \sim \textrm{Exp}(x+y)$. Hence,
\begin{equation*}
    \bP[U_{t+1}\in V_i|{\cI}^{\boldsymbol{F}}_t] = \sum_{j=1}^k \bP[e^{\boldsymbol{F}}_{t+1} \in (I^{\boldsymbol{F}}_{t}\cap V_i)\times V_j|{\cI}^{\boldsymbol{F}}_{t}] = \frac{\boldsymbol{1}\cdot \Lambda \cdot X^{\boldsymbol{F}}_i(t)b_i }{\boldsymbol{1}\cdot \Lambda \cdot X^{\boldsymbol{F}}(t)} = \frac{w_iX^{\boldsymbol{F}}_i(t)}{\sum_{j=1}^k w_j X^{\boldsymbol{F}}_j(t)},
\end{equation*}
as desired, assuming $P(t)=X^{\boldsymbol{F}}(t)$. Now, conditionally on $U_{t+1} \in V_i$, for all $j\in [k]$, we add a ball of type $j$ with probability
$$\bE[\xi^{(t+1)}_{ij}|{\cI}^{\boldsymbol{F}}_{t}] = \bP[v_{t+1} \in V_j | u_{t+1} \in  V_i] = \frac{\bP[e^{\boldsymbol{F}}_{t+1} \in (I^{\boldsymbol{F}}_{t}\cap V_i)\times V_j|{\cI}^{\boldsymbol{F}}_{t}]}{\bP[U_{t+1}\in V_i|{\cI}^{\boldsymbol{F}}_t]}=\frac{\Lambda_{ji}}{w_i}, $$
so the mean replacement matrix is, indeed, $\Lambda^T$. This concludes the proof.
\end{proof}

\begin{corollary}
Conditionally on $\cI^{\boldsymbol{F}}$ being a tree, for all $t\leq \tau^{\boldsymbol{F}}$, the tree of vertex set $I_t^{\boldsymbol{F}}$ and edges $\{e^{\boldsymbol{F}}_s: s\leq t\}$ is distributed as a weighted random recursive tree of size $t$, and random weights defined in the same way as in Proposition \ref{proposition_subcritical_structure}.
\end{corollary}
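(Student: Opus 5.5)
We argue by induction on $t$, building on the step-by-step computation in the proof of Lemma \ref{PU}. Conditionally on $\cI^{\boldsymbol{F}}$ being a tree, at each step $t+1\le\tau^{\boldsymbol{F}}$ exactly one new vertex $v_{t+1}$ is attached to an existing vertex $u_{t+1}\in I^{\boldsymbol{F}}_t$. We label the vertices of $I^{\boldsymbol{F}}_t$ by their arrival order $1,\ldots,t$, with $v$ receiving label $1$. The plan has three parts. First, compute the law of the parent $u_{t+1}$ given $\cI^{\boldsymbol{F}}_t$. Second, check that it coincides with the attachment rule of $T_t(\boldsymbol{W})$. Third, check that the type sequence $(\sigma(v_s))_s$ has the law of $(\sigma(s))_s$ and that it is independent of the attachment choices in the required sense.

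For the first step, we use block-regularity. For each $u\in I^{\boldsymbol{F}}_t\cap V_i$, the edges of $F_t$ incident to $u$ and going into $V_j$ number $(1+o(1))a_j n_*$; up to the rounding in the definition, this count does not depend on $u$. Each such edge has label $\mathrm{Exp}(p_{ij})$, and by the memoryless property these labels are independent of the past once conditioned to exceed $Z^{\boldsymbol{F}}_t$. Hence the minimal label among the edges at $u$ is exponential with rate $\sum_j a_j n_* p_{ij} = (n_*\log n/n)\,\omega_i$, where $\omega_i = b_i\cdot\Lambda\cdot\boldsymbol{1}$ up to the transpose convention used in Lemma \ref{PU}. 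Comparing independent exponentials then gives
$$\bP[u_{t+1}=u \mid \cI^{\boldsymbol{F}}_t] = \frac{\omega_{\sigma(u)}}{\sum_{w\in I^{\boldsymbol{F}}_t}\omega_{\sigma(w)}},$$
which is exactly the weighted random recursive tree rule with weight $W_m=\omega_{\sigma(m)}$ for the $m$-th vertex. The root weight is $\omega_{\sigma(v)}$, as required. The rounding from $\lfloor\cdot\rfloor,\lceil\cdot\rceil$ changes each rate by a factor $1+O(1/n)$. Over $t\le n^{\theta(1+\epsilon)}=o(n)$ steps these errors sum to $o(1)$ in total variation, so the conclusion holds up to $o(1)$ total variation error, which suffices for whp statements. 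Alternatively, one can take rates exactly proportional, which is the convention used in Lemma \ref{PU}.

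For the remaining steps, Lemma \ref{PU} shows that, conditionally on $u_{t+1}\in V_i$, the type of $v_{t+1}$ is $j$ with probability $\Lambda_{ji}/w_i$. Averaging over the parent's type gives
$$\bP[\sigma(v_{t+1})=j\mid\cI^{\boldsymbol{F}}_t]\propto \sum_i \Lambda_{ji}X^{\boldsymbol{F}}_i(t) = b_j\cdot\Lambda\cdot X^{\boldsymbol{F}}(t).$$
This depends on the past only through the type counts, so it matches the law of $\sigma(t+1)$ given $(\sigma(s))_{s\le t}$. The main obstacle is the independence claim in the definition of $T_M(\boldsymbol{W})$: the parent index $K_{t+1}$ must be independent of future weights, while in our process the type of $v_{t+1}$ is correlated with the type of its parent. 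We will handle this by noting that the definition only requires $\bP[K_{t+1}=k]\propto w_k$ given the whole realized weight sequence. We write the joint law of (parent, child type) as
$$\frac{\omega_{\sigma(u)}}{\sum_{w\in I^{\boldsymbol{F}}_t}\omega_{\sigma(w)}}\cdot\frac{\Lambda_{j\sigma(u)}}{\omega_{\sigma(u)}},$$
and then verify that conditioning on the next type $j$ reweights the parent distribution to be proportional to $\Lambda_{j\sigma(u)}$. If this conditional law is not $\propto\omega_{\sigma(u)}$, we will reinterpret the statement to mean the weighted random recursive tree whose weights are determined jointly with the tree; this matches the ``Moreover'' phrasing. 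We expect reconciling these two readings to be the delicate point. Once the one-step laws agree, the chain rule over $s\le t$ yields the distributional identity for all $t\le\tau^{\boldsymbol{F}}$.
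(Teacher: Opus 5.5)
\textbf{The final step fails, and the statement as written is false in general.} Your first two steps reproduce the computation in the proof of Lemma \ref{PU}: the parent is chosen with probability $\propto\omega_{\sigma(u)}$, and the next type is $j$ with probability $\propto b_j\cdot\Lambda\cdot X^{\boldsymbol{F}}(t)$. That is all the paper offers; the corollary is asserted directly after the lemma with no further argument. The step you single out as delicate is where the proof breaks, and it cannot be closed.

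In $T_t(\boldsymbol{W})$ the type sequence is generated in advance and $K_{t+1}$ is independent of it. So conditionally on $\sigma(t+1)=j$, the parent is still chosen $\propto\omega_{\sigma(u)}$. In the exploration, as you computed, conditionally on the new vertex having type $j$, the parent is chosen $\propto\Lambda_{j\sigma(u)}$. These agree only if $\Lambda_{ji}/\omega_i$ does not depend on $i$ (rank-one $\Lambda$). Your closing chain-rule argument therefore has no one-step agreement to work with.

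The laws already differ at four vertices. Take $k=2$, $\lambda_{11}=\lambda_{22}=0$, $\lambda_{12}=\lambda_{21}=\lambda$ small (so that $\theta<1$), $a_1=1/3$, $a_2=2/3$, root of type $1$, and number the vertices $1,2,3,4$ by arrival. Type-$1$ and type-$2$ vertices have attachment weights in ratio $2:1$, and every child has the type opposite to its parent. Consider the event that vertex $3$ attaches to $2$ and vertex $4$ to $3$.
\begin{itemize}
\item In the exploration, vertex $3$ then has type $1$, so the event has probability $\frac13\cdot\frac25=\frac2{15}$ (up to $O(1/n)$).
\item In $T_4(\boldsymbol{W})$, $W_3$ equals $2$ or $1$ with probabilities $\frac13,\frac23$ independently of $K_3$, so the event has probability $\frac13\bigl(\frac13\cdot\frac25+\frac23\cdot\frac14\bigr)=\frac1{10}$.
\end{itemize}
So even the untyped, arrival-labelled tree has the wrong law. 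With $a_1=a_2$ the shapes agree, but the typed trees do not, since types alternate with depth.

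\textbf{The reinterpretation does not rescue it.} Reinterpreting the statement does not prove it. The object you would obtain is a multitype recursive tree in which the new vertex attaches to $u$ with probability $\propto\omega_{\sigma(u)}$ and then draws its type from $(\Lambda_{j\sigma(u)}/\omega_{\sigma(u)})_j$. That is not $T_t(\boldsymbol{W})$. The ``Moreover'' clauses refer to the same $\boldsymbol{W}$, which the introduction explicitly makes independent of the tree, precisely so that \cite{S18} applies.

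What your argument, like Lemma \ref{PU}, does establish is the following:
\begin{itemize}
\item the type counts form the P\'olya urn, so the type proportions follow;
\item conditionally on $\cI^{\boldsymbol{F}}_t$, the next parent is chosen $\propto\omega_{\sigma(u)}$, i.e.\ a recursive tree with adapted rather than independent weights;
\item when all $\omega_i$ coincide, the untyped tree is a uniform random recursive tree.
\end{itemize}
The paper's one-line deduction from Lemma \ref{PU} has exactly the same gap. The corollary would have to be weakened along these lines rather than proved as written.

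\textbf{A smaller point.} Block-regularity only fixes the aggregate counts $|F_t\cap(I^{\boldsymbol{F}}_t\times V_j)|$, not the number of candidate edges at each $u$ into each $V_j$. Your ``for each $u$'' step therefore needs a per-vertex regularity that the definition does not provide. The paper assumes this tacitly in \eqref{eq:Z}.
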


We can now analyze the stopping time $\tau^{\boldsymbol{F}}$.

\begin{lemma}\label{concentration_labels}
Let $\epsilon > 0$ and set $r^- = n^{(1-\epsilon)\theta}$. Then, whp $Z^{\boldsymbol{F}}_{r^-} < 1$. In particular, $\tau^{\boldsymbol{F}} \geq r^-$. \end{lemma}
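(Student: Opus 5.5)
We write $Z^{\boldsymbol{F}}_{r^-}$ as a sum of conditionally independent exponential increments and control it via the Pólya urn limit. By \eqref{eq:Z}, conditionally on $(X^{\boldsymbol{F}}(s))_{s}$,
$$Z^{\boldsymbol{F}}_{r^-}=\sum_{t=0}^{r^--1} \frac{E_t}{ (\boldsymbol{1}\cdot\Lambda\cdot X^{\boldsymbol{F}}(t))\, n_*\log n/n},$$
with $(E_t)$ i.i.d. $\mathrm{Exp}(1)$. To make this valid without worrying about whether $\tau^{\boldsymbol{F}}\ge r^-$, we run the process with the labels extended beyond time $1$. Equivalently, we use the urn $P(t)$ of Lemma \ref{PU} for all $t$, since the construction only involves exponential minima and is defined for all $t$. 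The event $\{Z^{\boldsymbol{F}}_{r^-}<1\}$ then implies $\tau^{\boldsymbol{F}}\ge r^-$, because the process stops exactly when the label exceeds $1$.

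\textbf{Step 1: the denominators.} By \eqref{PU:limit}, $P(t)/t\to\theta x_\theta$ almost surely, and $\boldsymbol{1}\cdot\Lambda\cdot\theta x_\theta=\theta$ by the normalization. Hence for any $\delta>0$ there is a random but a.s. finite $T_\delta$ with $\boldsymbol{1}\cdot\Lambda\cdot P(t)\ge(1-\delta)\theta t$ for all $t\ge T_\delta$. Since $T_\delta$ is a fixed-law random variable, independent of $n$, we have $T_\delta\le K$ with probability at least $1-\eta$ for a constant $K=K(\eta)$. For $t<K$ we only use $\boldsymbol{1}\cdot\Lambda\cdot P(t)\ge \min_i w_i>0$; this holds because $\Lambda$ is irreducible, so every column sum $w_i$ is positive.

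\textbf{Step 2: bounding the sum.} Using $n_*/n=1-o(1)$, on the good event we get
$$Z^{\boldsymbol{F}}_{r^-}\le \frac{1+o(1)}{\log n}\Big(\sum_{t<K}\frac{E_t}{\min_i w_i}+\sum_{t=K}^{r^-}\frac{E_t}{(1-\delta)\theta t}\Big).$$
The first part is $O_p(1/\log n)$. For the second part, its expectation is at most $\frac{(1+o(1))\log r^-}{(1-\delta)\theta\log n}=\frac{1-\epsilon}{1-\delta}(1+o(1))$. Its variance is at most $\sum_t 1/(\theta t)^2/\log^2 n=O(1/\log^2 n)$. Choosing $\delta<\epsilon/2$, Chebyshev's inequality gives that the second part is below $(1-\epsilon/3)$ whp. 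Letting $\eta\to0$ gives $Z^{\boldsymbol{F}}_{r^-}<1$ whp.

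\textbf{Main obstacle.} The delicate point is the conditioning. The $E_t$ are independent of the urn path because, given the past, the memoryless minimum is independent of which edge attains it. This is the standard independence of the minimum's value and its argmin for exponentials, which we would invoke explicitly. The other subtlety is that the a.s. urn convergence is used uniformly in $t\ge K$ and not only at $t=r^-$; this is what the a.s. (rather than in-probability) form of Theorem \ref{PU_Janson} provides.
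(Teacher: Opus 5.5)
Your proof is correct and follows the paper's argument. You use the same representation of $Z^{\boldsymbol{F}}_{r^-}$ as a sum of conditionally independent exponential increments, the P\'olya urn limit to bound the rates by roughly $(1-\delta)\theta t\log n$ beyond a finite time and by the minimal column sum before it, and the same harmonic-sum mean $\frac{1-\epsilon}{1-\delta}(1+o(1))<1$. You differ only in concluding with Chebyshev (variance $O(1/\log^2 n)$) where the paper uses Janson's exponential tail bound (Lemma \ref{janson1}) to get a polynomial rate for that step, and your treatment of the random threshold via almost sure convergence and tightness of $T_\delta$ is more careful than the paper's phrasing.
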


Before we prove this result, we need the following technical lemma, due to Janson. The proof can be found in \cite{J18}.

\begin{lemma}
\label{janson1}
Let $r \in \mathbb{N}$, $(\lambda_i)_{i\in [r]} \in (\mathbb{R}^+)^r$ and $X_i \sim \mathrm{Exp}(\lambda_i)$ be independent random variables. Define
$$ \mu = \mathbb{E}\left[ \sum_{i=1}^r X_i \right] = \sum_{i=1}^r \frac{1}{\lambda_i}, \quad \quad \lambda_* = \min_{i \in [r]} \lambda_i. $$
Then, for $\epsilon >0$, we have 
$$ \mathbb{P}\left[ \sum_{i=1}^r X_i \geq (1+\epsilon)\mu \right] \leq (1+\epsilon)^{-1} e^{-\lambda_* \mu (\epsilon-\log(1+\epsilon))}.$$
\end{lemma}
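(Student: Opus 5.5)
The plan is a Chernoff-type argument on the moment generating function of $S\coloneqq\sum_{i=1}^r X_i$, with the inhomogeneity of the rates $\lambda_i$ handled by a convexity estimate that reduces everything to the smallest rate $\lambda_*$. The exponent comes out cleanly; the prefactor $(1+\epsilon)^{-1}$ is the part that needs extra care.

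\textbf{Step 1: exponent.} For $0<s<\lambda_*$, independence gives
$$\bE[e^{sS}]=\prod_{i=1}^r\frac{\lambda_i}{\lambda_i-s},\qquad \log\bE[e^{sS}]=\sum_{i=1}^r -\log\Bigl(1-\frac{s}{\lambda_i}\Bigr).$$
The function $x\mapsto -\log(1-x)/x$ is increasing on $(0,1)$. Set $u\coloneqq s/\lambda_*\in(0,1)$. Since $s/\lambda_i\le u$ for every $i$, each term satisfies
$$-\log\Bigl(1-\frac{s}{\lambda_i}\Bigr)\le \frac{s}{\lambda_i}\cdot\frac{-\log(1-u)}{u}.$$
Summing over $i$ and using $\sum_i s/\lambda_i=s\mu$ together with $s/u=\lambda_*$, we get
$$\log\bE[e^{sS}]\le -\lambda_*\mu\log(1-u).$$
In other words, the MGF of $S$ is dominated by that of a Gamma variable with shape $\lambda_*\mu$ and rate $\lambda_*$.

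\textbf{Step 2: optimize.} Markov's inequality then gives
$$\bP[S\ge(1+\epsilon)\mu]\le\exp\Bigl(\lambda_*\mu\bigl[-u(1+\epsilon)-\log(1-u)\bigr]\Bigr).$$
Choosing $u=\epsilon/(1+\epsilon)$, so that $1-u=(1+\epsilon)^{-1}$, makes the bracket equal to $-(\epsilon-\log(1+\epsilon))$. This yields the claimed exponential factor $e^{-\lambda_*\mu(\epsilon-\log(1+\epsilon))}$.

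\textbf{Step 3: prefactor.} The main obstacle is the extra factor $(1+\epsilon)^{-1}$, which plain Markov does not give. My first attempt is a tilted Markov inequality with a linear weight,
$$\bP[S\ge x]\le \frac{\bE[Se^{sS}]}{x\,e^{sx}},\qquad x=(1+\epsilon)\mu.$$
Here $\bE[Se^{sS}]=\bE[e^{sS}]\sum_i(\lambda_i-s)^{-1}$, and $\sum_i(\lambda_i-s)^{-1}\le \mu/(1-u)=(1+\epsilon)\mu$. This reproduces the exponential bound from Step 2 with prefactor $(1+\epsilon)\mu/x=1$, so it gains nothing yet.

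To extract the further $(1+\epsilon)^{-1}$, I would work under the exponentially tilted measure $d\bQ\propto e^{sS}\,d\bP$. Under $\bQ$ the $X_i$ remain independent exponentials, now with rates $\lambda_i-s$, and
$$\bP[S\ge x]=\bE_{\bQ}\bigl[e^{-sS}\indicatrice_{S\ge x}\bigr]\,\bE[e^{sS}].$$
The task is then to bound $\bE_{\bQ}[e^{-s(S-x)}\indicatrice_{S\ge x}]$ by something of order $(1+\epsilon)^{-1}e^{-sx}$. I would try to do this using the exponential tail of the overshoot $S-x$ under $\bQ$, exploiting that the tilted mean $\sum_i(\lambda_i-s)^{-1}$ is at most $x$. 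If this fails for general rates, the fallback is to prove the bound directly in the Gamma case, where all rates equal $\lambda_*$ and an explicit computation with the incomplete Gamma function is possible. One would then transfer it to general rates through the convex-order comparison suggested by Step 1; this transfer is the delicate point.
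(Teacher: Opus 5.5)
Your Steps 1 and 2 are correct. They give $\mathbb{P}[S\ge(1+\epsilon)\mu]\le e^{-\lambda_*\mu(\epsilon-\log(1+\epsilon))}$. That is in fact all the paper needs: it quotes the lemma from Janson \cite{J18} without proof, and uses it in Lemma \ref{concentration_labels} only to get an $o(1)$ bound with $\lambda_*\to\infty$.

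The statement, however, includes the prefactor $(1+\epsilon)^{-1}$, and Step 3 does not prove it. It lists strategies without carrying any of them out, so as written you have established a strictly weaker inequality. Both strategies are also aimed slightly off.

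\textbf{The tilting route.} Since $e^{-sx}$ has already been factored out, what you need is
\[
\mathbb{E}_{\mathbb{Q}}\bigl[e^{-s(S-x)}\mathbf{1}_{S\ge x}\bigr]\le(1+\epsilon)^{-1},
\]
not a bound of order $(1+\epsilon)^{-1}e^{-sx}$. For this the overshoot $S-x$ must be \emph{large}, so you need a lower stochastic bound on it, not an exponential tail (upper) bound. The fact that the tilted mean $\sum_i(\lambda_i-s)^{-1}$ is at most $x$ is not what drives the estimate.

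\textbf{The Gamma fallback.} Step 1 only dominates the moment generating function by that of a Gamma variable, whose shape $\lambda_*\mu$ is generally non-integer. Neither this domination nor a convex-order comparison transfers a tail probability with an exact multiplicative constant.

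\textbf{The missing idea.} Isolate one variable of smallest rate and use its memorylessness. Let $\lambda_{i_*}=\lambda_*$ and $S'=S-X_{i_*}$, which is independent of $X_{i_*}$. For every real $a$ and $0<s<\lambda_*$,
\[
\mathbb{P}[X_{i_*}\ge a]=\min(1,e^{-\lambda_* a})\le e^{-sa}.
\]
Conditioning on $S'$ therefore gives
\[
\mathbb{P}[S\ge x]=\mathbb{E}\bigl[\mathbb{P}[X_{i_*}\ge x-S'\mid S']\bigr]\le e^{-sx}\,\mathbb{E}[e^{sS'}]=\Bigl(1-\frac{s}{\lambda_*}\Bigr)e^{-sx}\,\mathbb{E}[e^{sS}].
\]
The last equality holds because dropping $X_{i_*}$ removes the factor $\lambda_*/(\lambda_*-s)$ from the moment generating function. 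With $u=s/\lambda_*=\epsilon/(1+\epsilon)$, the new factor is exactly $1-u=(1+\epsilon)^{-1}$. Your Steps 1--2 bound the remaining $e^{-sx}\mathbb{E}[e^{sS}]$ by $e^{-\lambda_*\mu(\epsilon-\log(1+\epsilon))}$, which proves the lemma.

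In your tilted language this is the same fact. Under $\mathbb{Q}$ the variable $X_{i_*}$ is $\mathrm{Exp}(\lambda_*-s)$, so conditionally on $S\ge x$ the overshoot $S-x$ stochastically dominates an $\mathrm{Exp}(\lambda_*-s)$ variable. Hence $\mathbb{E}_{\mathbb{Q}}[e^{-s(S-x)}\mathbf{1}_{S\ge x}]\le(\lambda_*-s)/\lambda_*$.
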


\begin{proof}[Proof of Lemma \ref{concentration_labels}]
Denote by $\Delta Z^{\boldsymbol{F}}_t \coloneqq Z^{\boldsymbol{F}}_t - Z^{\boldsymbol{F}}_{t-1}$ with the convention $Z^{\boldsymbol{F}}_0 = 0$. Let $z_t \coloneqq (\boldsymbol{1}\cdot \Lambda \cdot X^{\boldsymbol{F}}(t))/t$. From \eqref{eq:Z}, $\Delta Z^{\boldsymbol{F}}_t \sim \textrm{Exp}(z_t t\log n)$. Crucially, conditionally on the process $(X^{\boldsymbol{F}}(s))_{s\leq t}$, $(\Delta Z^{\boldsymbol{F}}_s)_{s\leq t}$ are mutually independent random variables. It follows from Lemma \ref{PU} that $z_t \overset{\bP}{\to} \theta$, that is, whp, for all $\epsilon^*>0$, there exists $M>0$, such that for all $t\geq M$, $|z_t-\theta|<\epsilon^*$. Denote by 
$$z_{\min} = \min_{t\leq r^-} z_t \geq \min_{i \in [k]}\sum_{j=1}^k \Lambda_{ij} > 0, $$
since $\Lambda$ is assumed to be irreducible. Hence, we can construct a sequence of mutually independent exponential random variables $X_i$ of parameters $\lambda_i = z_{\min}i\log n$ for $i < M$, and $\lambda_i = (\theta-\epsilon^*)i\log n$ for $i \geq M$ such that $\Delta Z^{\boldsymbol{F}}_i$ is stochastically dominated by $X_i$, for all $i \leq r^-$. Notice that $\lambda_* = \min_{i\in [r]}\lambda_i \to +\infty$ as $n\to \infty$. Moreover,
\begin{equation*}
\begin{split}
\mu = \bE\left[\sum_{i=1}^{r^-} X_i\right] &= \sum_{i=1}^M\left(\frac{1}{z_{\min}i\log n} -\frac{1}{(\theta-\epsilon^*)i\log n} \right) + \sum_{i=1}^{r^-} \frac{1}{(\theta-\epsilon^*)i\log n}\\ 
&= O(M/\log n)+\frac{\log r^-}{(\theta-\epsilon^*)\log n} = \frac{(1-\epsilon)\theta}{\theta-\epsilon^*}\pm o(1)<1,
\end{split}
\end{equation*}
where the last inequality is made possible by choosing $\epsilon^*$ small enough. Lemma \ref{janson1} yields
$$\bP[Z^{\boldsymbol{F}}_{r^-} \geq 1] \leq \bP\left[\sum_{i=1}^{r^-} X_i \geq 1\right] =O(n^{-z_{\min}\varphi(\epsilon)}) =o(1), $$
where $\varphi(\epsilon) > 0$ is some positive constant, depending only on $\epsilon$. 
\end{proof}

\section{Discussion}
\label{discussion}

\subsection{Variations of the model}
Here, we list some related models of the temporal stochastic block model for which our results hold as well.

\paragraph{Multigraphs.} Multigraphs are a natural extension of the model, where repetitions of edges are allowed. For fixed $m \in \bN$, sample $m$ random edges from the complete graph independently, where each edge $e \in V_i\times V_j$ has probability proportional to $p_{ij}$ to be selected at each step. Denote the resulting temporal graph by $H_m = H_m(n,k,\boldsymbol{a},P)$, where each edge is assigned a uniformly random label. In the case where $p_{ij}\equiv p$ for all $i,j \in [k]$, this is known as the \emph{random gossiping protocol}, considered in \cite{BS79,H81,M72,MSA20,MSRA16,VKS17}. Our results transfer to this model for an appropriate parametrization of the parameter $m$, namely $m \sim \sum_{i,j \in [k]} a_ia_jp_{ij}n^2$, the average number of edges in the simple temporal stochastic block model. Indeed, by conditioning $H_{m}$ on being a simple graph, we can couple $(G,\pi)$ and $H_m$, in such a way that $(G,\pi)\subseteq H_{m}$. This allows us to deduce all lower bounds. Conversely, all of our first-moment arguments for non-existence of increasing paths can be adapted to the model $H_{m}$.

\paragraph{Directed graphs.} Another variation of the temporal stochastic block model is one where edges are directed, that is, the probability matrix $P$ is not necessarily \emph{symmetric}. Directed stochastic block models (see, e.g., \cite{WW87, A15, AB00}) naturally model infections, where, for instance, one community may adopt protective measures (e.g., wearing masks), leading to asymmetric transmissions across communities. With the additional constraint that $u$ is reached by $v$ if there exists and increasing \emph{directed} path from $u$ to $v$, it is easy to see that all our results regarding reachability extend to this setting, since we never use the symmetry of $P$ in our analysis.

\paragraph{Random blocks.} The classical stochastic block model is usually defined in a slightly different way than in this paper. Instead of using the vector $\boldsymbol{a} = (a_1,\ldots,a_k)$ as the deterministic proportion of each block, vertices are assigned a community at random according to probabilities given by $\boldsymbol{a}$. It is straightforward to show that the two models are equivalent when examining their asymptotic behavior, the main ingredient of the proof being the law of large numbers.

\subsection{Related open problems}

We conclude with several natural directions for further work.

\begin{enumerate}
    \item \emph{The critical window.}
    We do not study the behavior of the reachable set at the critical point $\theta=1$. Describing the size and structure of $B_{(G,\pi)}(v)$ in the critical window remains open, even in the temporal Erd\H os--R\'enyi case.
    \item \emph{Unbounded number of communities.}
    Throughout the paper we assume that $k$ is fixed. A natural extension is to allow $k=k(n)\to\infty$. In the limit, this leads to temporal versions of inhomogeneous random graphs or graphon models, where connection probabilities may vary across pairs of vertices.

    \item \emph{Inhomogeneous order of connection probabilities.}
    Assumption \ref{ass2} requires all nonzero connection probabilities to be of the same order. It would be interesting to understand what happens when different blocks interact at different rates, especially in sparse or heavy-tailed regimes.
    
    \item \emph{Blocks of sublinear size.}
    Throughout the paper, all communities have linear size. It would be interesting to understand what happens when some blocks are sublinear. Such blocks need not be negligible: already in the case $k=2$, the expression in Example \ref{example_explicit} suggests that interactions between two blocks may contribute to the leading eigenvalue when $|V_i|\times|V_j|\times p_{ij}$ is of sufficiently large order, so the connection probabilities may \emph{compensate} the small size of a block. In this setting, the behavior of the reachable set may depend on the community containing the origin.
\end{enumerate}

\bibliography{bib}
\bibliographystyle{unsrturl}
\newpage
\appendix
\section{Proof of Claim \ref{claim:calcul_Y}}
\label{proof:claim}

Observe that $Y$ is distributed as the minimum of $a$ i.i.d. uniform random variables of distribution $\textrm{Unif}([z,1/p])$. Hence, $Y$ has density 
$$f_Y(x) = \partial_x (1-\bP[Y \geq x]) = \partial_x \left(-\left( \frac{1/p - x}{1/p - z}\right)^{a}\right)= \frac{ap(1-px)^{a-1} }{(1-pz)^{a}}, $$
for all $x \in [z,1/p]$. Meanwhile, $X$ is the minimum of $a$ i.i.d. exponential random variables of parameter $p$, conditioned on being greater than $z$. Hence, by the memoryless property, we obtain that the density of $X$ is given by
$$f_X (x) = ape^{-ap(x-z)}, $$
for all $x \geq z$. We now couple $X$ and $Y$ in such a way that
$$ \bP_{(X,Y)}[X \neq Y ] = \tv(X,Y).$$
This is indeed possible, as such a coupling exists (see \cite{L92}, Theorem I.5.2). We now compute
\begin{equation*}
\begin{split}
\tv(X,Y) &= \frac{1}{2}\int_z^\infty \left| f_Y(x)\indicatrice_{x \leq 1/p} - f_X(x)\right|dx\\
&= \frac{1}{2}\left(\int_z^{\frac{1}{p}} ap\left|\frac{(1-px)^{a-1}}{(1-pz)^a} - e^{-ap(x-z)}  \right|dx + \int_{\frac{1}{p}}^\infty ap e^{-ap(x-z)}dx\right).
\end{split}
\end{equation*}
We change the variable in both integrals $y = p(x-z)$ to obtain
\begin{equation*}
\begin{split}
\tv(X,Y) &= \frac{1}{2}\left(\int_0^{1-pz} a\left|\frac{(1-pz-y)^{a-1}}{(1-pz)^a} - e^{-ay}  \right|dy + \int_{1-pz}^\infty a e^{-ay}dy\right)\\
&\leq \frac{1}{2}\left(\int_0^{\alpha} ae^{-ay}\alpha^{-a}\left|e^{ay}(\alpha-y)^{a-1} - \alpha^a \right|dy + e^{-a(1-p)}\right),
\end{split}
\end{equation*}
where we set $\alpha \coloneqq 1-pz$, and we used that $1-pz < 1$ and $z<1$ in the second integral. Let $h(y) = e^{ay}(\alpha-y)^{a-1} - \alpha^a$. Then, by computing the derivative of $h$, $$h^\prime(y) = ae^{ay} (\alpha-y)^{a-2}(-y-1+\alpha+1/a),$$ 
we see that on the interval $(0,\alpha)$, 
$$h \textrm{ is } \begin{cases} \textrm{decreasing } &\textrm{ if } z\geq \frac{1}{ap},\\
\textrm{admits a local maximum at }\frac{1}{a}-pz &\textrm{ otherwise.}\end{cases}$$
Since $h(0) = \alpha^{a-1}(1-\alpha) > 0$ and $h(\alpha) = -1 < 0$, we deduce that, for all $z\in (0,1)$, there exists a unique solution $y_a \in (0,\alpha)$ to $h(y)=0$. Then, we split the first integral at $y_a$ to remove the absolute value 
\begin{equation*}
\begin{split}
\tv(X,Y)&\leq \frac{1}{2}\left(\int_0^{y_a} \frac{a(\alpha-y)^{a-1}}{\alpha^a} - ae^{-ay} dy + \int_{y_a}^\alpha a e^{-ay} -\frac{a(\alpha-y)^{a-1}}{\alpha^a} dy + e^{-a(1-p)}\right)\\
&= e^{-ay_a} - \alpha^{-a}(\alpha-y_a)^a - \frac{1}{2}e^{-a\alpha} +\frac{1}{2}e^{-a(1-p)} \\
&= e^{-ay_a}(1-\alpha+y_a) - \frac{1}{2}e^{a\alpha} + \frac{1}{2}e^{-a(1-p)},
\end{split}
\end{equation*}
where we used in the last equality the fact that $h(y_a)=0$ by definition. Now, since on the interval $(0,1)$, $x\mapsto xe^{-ax}$ is maximal at $x = 1/a$, and $z < 1$, it follows that
$$\tv(X,Y) \leq \frac{1}{a} + p + e^{-a(1-p)}, $$
which concludes the proof.

\end{document}